\tikzset{commutative diagrams/.cd,every label/.append style = {font = \normalsize}}
\DeclareMathOperator{\Ext}{Ext}
\DeclareMathOperator{\Gr}{Gr}
\DeclareMathOperator{\Mat}{Mat}
\DeclareMathOperator{\sgn}{sgn}
\DeclareMathOperator{\cdes}{cDes_L}
\DeclareMathOperator{\var}{var}
\DeclareMathOperator{\PT}{PT}
\DeclareMathOperator{\area}{area}
\DeclareMathOperator{\garea}{gr-area}
\DeclareMathOperator{\Res}{Res}
\DeclareMathOperator{\sh}{sh}
\DeclareMathOperator{\Sh}{Sh}
\newcommand{\rf}[1]{\hyperref[#1]{(\ref*{#1})}}
\newcommand{\psimp}{\tilde{\Delta}}
\newcommand{\hatG}{\hat{G}}
\newcommand{\shuff}{{\sh^{(ij)}_{n}}}
\newcommand{\Grk}{\Gr_{k,n}^{\ge 0}}
\def\AA{\mathcal{A}_{n,k,m}(Z)}
\newcommand{\Ank}{\mathcal{A}_{n, k, 2}({Z})}
\def\tZ{\tilde{Z}}
\def\4biddenprop{4-coindependent}
\newcommand{\llrr}[1]{\langle\!\langle #1 \rangle\!\rangle}
\newcommand{\gt}[1]{Z_{#1}}
\newcommand{\gto}[1]{Z_{#1}^\circ}
\def\M{\mathcal{M}}
\DeclareMathOperator{\Vol}{Vol}
\DeclareMathOperator{\pt}{pt}
\def\unistr{\widehat{\Gr}_{2,n}^\circ}
\newcommand{\C}{\mathbb{C}}
\newcommand{\A}{\mathcal{A}}
\newcommand{\T}{\mathcal{T}}
\def\R{\mathbb{R}}
\newcommand{\htile}[1]{\Gamma_{#1}}
\newcommand{\ptile}[1]{\tilde{\Gamma}_{#1}}
\newcommand{\atile}[1]{Z_{#1}}
\newcommand{\simp}[1]{\Delta_{(#1)}}
\newcommand{\cham}[1]{\Delta_{(#1)}^Z}
\newcommand{\gcham}[1]{\Delta_{(#1)}^\mathcal{G}}
\def\sdiv{{\sigma}}
\def\tdiv{{\tau}}
\newtheorem{mainthm}{Theorem}
\newtheorem*{theorem*}{Theorem}
\newtheorem*{proposition*}{Proposition}
\newtheorem*{corollary*}{Corollary}
\newtheorem{theorem}{Theorem}[section]
\newtheorem{lemma}[theorem]{Lemma}
\newtheorem{proposition}[theorem]{Proposition}
\newtheorem{corollary}[theorem]{Corollary}
\newtheorem{question}[theorem]{Question}
\theoremstyle{definition}
\newtheorem{definition}[theorem]{Definition}
\newtheorem{example}[theorem]{Example}
\newtheorem{remark}[theorem]{Remark}
\newtheorem{notation}[theorem]{Notation}
\newtheorem{claim}[theorem]{Claim}
\setlist[itemize]{leftmargin=*}
\setlist[enumerate]{leftmargin=*}
\begin{document}
\begin{abstract}
The \emph{amplituhedron} $\mathcal{A}_{n,k,m}$ is a geometric object introduced in the 
context of scattering amplitudes in $\mathcal{N}=4$ super Yang Mills.  It generalizes 
the positive Grassmannian (when $n=k+m$), cyclic polytopes (when $k=1$), 
and the bounded complex of the cyclic hyperplane arrangement (when $m=1$). Of substantial interest are the \emph{tilings} of the amplituhedron, which are analogous to triangulations of a polytope.
In \cite{karp2020decompositions}, it was observed that the known tilings of 
$\mathcal{A}_{n,k,2}$ have cardinality ${n-2 \choose k}$ and the known tilings of 
$\mathcal{A}_{n,k,4}$ have cardinality the \emph{Narayana number} $\frac{1}{n-3}{n-3 \choose k+1}{n-3 \choose k}$;
generalizing these observations, \cite{karp2020decompositions} conjectured that for even $m$ the tilings of $\mathcal{A}_{n, k,m}$ have cardinality
the \emph{MacMahon number}, the number of plane partitions which fit inside a $k \times (n-k-m) \times \frac{m}{2}$ box.
We refer to this prediction as the \emph{Magic Number Conjecture}. 
In this paper we prove the Magic Number Conjecture for the $m=2$ amplituhedron:
that is, we show that 
each tiling of $\mathcal{A}_{n,k,2}$ has cardinality ${n-2 \choose k}.$ 
We prove this by showing that all  positroid tilings of the hypersimplex
$\Delta_{k+1,n}$ have cardinality ${n-2 \choose k},$ then applying
 \emph{T-duality}.  In addition, we give
combinatorial necessary conditions for tiles to form a tiling of $\mathcal{A}_{n,k,2}$;
 we give volume formulas for 
 \emph{Parke-Taylor polytopes} and certain positroid polytopes 
 in terms of circular extensions of \emph{cyclic partial orders}; and we prove
	new variants of the 
	classical \emph{Parke-Taylor identities}.
 
\end{abstract}
	
\title
[The magic number conjecture for $\mathcal{A}_{n,k,2}$ and Parke-Taylor identities]
{The magic number conjecture for the $m=2$ amplituhedron and Parke-Taylor identities}

	\author[M. Parisi]{Matteo Parisi}
	\address{CMSA, Harvard University, Cambridge, MA; Institute for Advanced Study, Princeton, NJ;}
 \email{mparisi@cmsa.fas.harvard.edu}
	
 \author[M. Sherman-Bennett]{Melissa Sherman-Bennett}
	\address{Department of Mathematics, MIT, Cambridge, MA}
	\email{msherben@mit.edu}
	\author[R. Tessler]{Ran Tessler}
	\address{Department of Mathematics, Weizmann Institute of Science, Israel}
	\email{ran.tessler@weizmann.ac.il}
	\author[L. Williams]{Lauren Williams}
	\address{Department of Mathematics, Harvard University, Cambridge, MA}
	\email{williams@math.harvard.edu}
	\maketitle

	\setcounter{tocdepth}{1}
	\tableofcontents

\section{Introduction}

The (tree) \emph{amplituhedron} $\mathcal{A}_{n,k,m}(Z)$ is the image of the
positive Grassmannian $\Gr_{k,n}^{\scriptscriptstyle\geq 0}$ under the
\emph{amplituhedron map} $\tilde{Z}: \Gr_{k,n}^{\scriptscriptstyle\geq 0} \to \Gr_{k,k+m}$, a map induced by matrix multiplication by 
a positive matrix $Z\in \Mat_{n,k+m}^{>0}$.  The amplituhedron was introduced by
Arkani-Hamed and Trnka \cite{arkani-hamed_trnka} in order  to give a
geometric  interpretation of
\emph{scattering amplitudes} in $\mathcal{N}=4$ super Yang--Mills theory (SYM);
more specifically, they reformulated the BCFW recurrence for computing
scattering amplitudes as giving a \emph{tiling} of the $m=4$ amplituhedron.
Here, a \emph{tiling} of $\AA$ 
comes from a collection of $km$-dimensional cells
of $\Gr_{k,n}^{\geq 0}$ on which $\tilde{Z}$ is injective, such that the images of the cells are disjoint and cover a dense subset of the amplituhedron.  The notion of tiling can be thought of as a generalization of the notion of triangulation of a polytope.\footnote{We don't require tiles to intersect in a common ``face". For polytopes, this is weaker than a triangulation and it is sometime referred to as a \emph{dissection}.}

While the case $m=4$ is most directly relevant to physics, 
the amplituhedron $\mathcal{A}_{n,k,m}(Z)$
makes sense for any positive $n,k,m$ such that $k+m \leq n$, and
has a very rich geometric and combinatorial
structure.  It  generalizes cyclic polytopes (when $k=1$),
cyclic hyperplane arrangements \cite{karpwilliams}
(when $m=1$), and the positive Grassmannian (when $k=n-m$), and it
is connected to
the hypersimplex and the positive tropical Grassmanian \cite{LPW,PSW} (when $m=2$). This paper will focus on the case $m=2$.  In this case, the amplituhedron $\mathcal{A}_{n,k,2}$
is also closely
related to some scattering amplitudes, correlators of determinant operators and form factors in planar $\mathcal{N}=4$ super Yang--Mills theory \cite{Kojima:2020tjf,Caron-Huot:2023wdh,Basso:2023bwv}.

In \cite{karp2020decompositions} it was observed that the known tilings of
$\mathcal{A}_{n,k,2}(Z)$ have cardinality ${n-2 \choose k}$,  the known tilings of
$\mathcal{A}_{n,k,4}(Z)$ have cardinality the Narayana number $\frac{1}{n-3}{n-3 \choose k+1}{n-3 \choose k}$,
and all tilings of 
$\mathcal{A}_{n,1,m}$ for even $m$
have cardinality ${n-1- \frac{m}{2} \choose \frac{m}{2}}$.\footnote{This last statement
comes from the fact that 
every triangulation of the cyclic polytope
$C(n,m)$ contains exactly ${n-1- \frac{m}{2} \choose \frac{m}{2}}$ simplices when $m$ is even
\cite{bayer_93, rambau_97} and that for $C(n,m)$ tilings and triangulations coincide \cite{OT10}.}
\cite[Conjecture 8.1]{karp2020decompositions} generalized these observations 
by predicting that 
when $m$ is even, 
the amplituhedron $\mathcal{A}_{n,k,m}$ has a tiling with cardinality
$$M_{n,k,m}:=M\left(k, n-k-m, \frac{m}{2}\right), \ \ \text{ where } \ \ 
M(a,b,c):=\prod_{i=1}^a \prod_{j=1}^b \prod_{\ell=1}^c \frac{i+j+\ell-1}{i+j+\ell-2}$$ is
the \emph{MacMahon number}.\footnote{\cite[Remark 8.2]{karp2020decompositions} pointed
out that the conjecture could be generalized to all $m$ using 
$M(k,n-k-m, \lfloor \frac{m+1}{2} \rfloor).$}
 This number has many remarkable interpretations: $M(a,b,c)$ counts the number of \emph{plane partitions} which fit inside an $a \times b \times c$ box, collections of $c$ noncrossing lattice paths 
inside an $a\times b$ rectangle, rhombic tilings of a hexagon with side lengths $(a,b,c,a,b,c)$, perfect matchings of a honeycomb lattice with parameters 
$a,b,c$, \emph{Kekul\'e structures} of a hexagon-shaped benzenoid with parameters 
$a,b,c$, and the dimension of the degree $c$ component of the homogeneous coordinate
ring $\C[\Gr_{a,a+b}]$, see \cite{macmahon, gordon_davison_52, cyvin_1986, kekule_58, kekule, bodroza_gutman_cyvin_tosic_1988, hodge43}. 
(Plane partitions also naturally appear in the computation of the Euler characteristic of the Hilbert scheme of points in a $3$-fold \cite{MR1382733}, and in the study of Donaldson-Thomas invariants \cite{maulik2006gromov,maulik2006gromov2}.)
 After \cite{karp2020decompositions} appeared, \cite{Galashin_Lam_2020}  restated the conjecture but specified that \emph{each} tiling
should have this cardinality. 
We refer to the prediction that tilings of the amplituhedron $\mathcal{A}_{n,k,m}(Z)$ should have cardinality $M_{n,k,m}$ as the \emph{Magic Number Conjecture}. 

In this paper we prove the Magic Number Conjecture for the $m=2$ amplituhedron;
the following result appears as
\cref{cor:magic-number}.

\begin{mainthm}[Magic Number Theorem for $\Ank$]
Suppose a collection $\mathcal{C}$ of cells of $\Gr_{k,n}^{\geq 0}$
gives rise to a tiling of $\Ank$ for all $Z\in \Mat_{n,k+m}^{>0}$.
Then $\mathcal{C}$ has cardinality $M_{n,k,2}=\binom{n-2}{k}$.
\end{mainthm}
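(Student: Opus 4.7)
The plan is to decouple the argument into two parts: a reduction to a hypersimplex question via T-duality, and a counting argument on the hypersimplex using Parke-Taylor polytopes.

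First, I would invoke the T-duality correspondence established in \cite{LPW,PSW}, which associates to each $2k$-dimensional cell $C$ of $\Gr_{k,n}^{\geq 0}$ a positroid polytope $\Gamma_\pi \subset \Delta_{k+1,n}$ of dimension $n-1$, and under which a tiling of $\Ank$ pulls back to a positroid tiling of $\Delta_{k+1,n}$ of the same cardinality. This reduces the theorem to showing that every positroid tiling $\mathcal{C}'$ of the hypersimplex $\Delta_{k+1,n}$ satisfies $|\mathcal{C}'| = \binom{n-2}{k}$.

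Next, to count tiles in a positroid tiling of $\Delta_{k+1,n}$, I would decompose each top-dimensional positroid polytope $\Gamma_\pi$ into Parke-Taylor polytopes $\PT_\sigma$, with $\sigma$ ranging over cyclic orders refining a cyclic partial order attached to the positroid $\pi$. Two ingredients are needed: (i) a volume formula expressing $\Vol(\PT_\sigma)$ as a count of circular extensions of a cyclic partial order, and (ii) new variants of the Parke-Taylor identities relating signed sums over such extensions. Combining (i) and (ii), the total volume $\Vol(\Delta_{k+1,n}) = \sum_{\Gamma_\pi \in \mathcal{C}'} \Vol(\Gamma_\pi)$ can be rewritten as a signed sum over Parke-Taylor pieces in which contributions from pieces attached to ``interior'' facets of the tiling cancel in pairs, while each tile contributes a single unit to an additive global invariant. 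The invariant itself can then be evaluated to $\binom{n-2}{k}$ by checking it on a single reference tiling; a standard BCFW-type tiling is already known by \cite{karp2020decompositions} to have $\binom{n-2}{k}$ tiles, which pins down the normalization.

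The main obstacle is establishing the Parke-Taylor identity in precisely the form that makes the tiling sum telescope: one has to show that for any two positroid tiles meeting along a common facet, the Parke-Taylor pieces carried by that shared facet cancel between the two sides. This requires a careful analysis of how the cyclic partial orders of adjacent positroid tiles agree across the facet, together with a matching of circular extensions into cancelling pairs; the \emph{combinatorial necessary conditions for tiles} advertised in the abstract presumably supply the local data needed to set up this pairing. Once the cancellation is in place, plugging in the base case of a BCFW tiling completes the count on the hypersimplex, and the T-duality bijection transports the conclusion back to $\Ank$.
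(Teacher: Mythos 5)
Your T-duality reduction and the final normalization against a known $\binom{n-2}{k}$-element tiling both match the paper, but the core counting step has a genuine gap: nothing in your argument establishes that each tile contributes the \emph{same} amount to a tiling-independent global quantity. The paper's proof hinges on exactly this (\cref{prop:weight_tile}): defining the weight of a tile $\htile{\sdiv}$ as the sum of Parke-Taylor functions $\PT(w)$ over the $w$-simplices it contains (equivalently, over circular extensions of $C_\sdiv$, \cref{cor:w-simp-in-tile-cyclic-order-version}), and proving that this rational function equals $(-1)^k\PT(\mathbf{I}_n)$ for \emph{every} tile. That constancy is not a formal consequence of any Parke-Taylor identity; it is proved by induction on $n$ via a residue analysis (poles only at $P_{ij}=0$ for edges of polygons of $\sdiv$, cancellation of internal poles inside a single tile using the three-term Pl\"ucker relation, and a degree count). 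Your proposal replaces this with a volume-based bookkeeping, which cannot work: $\Vol(\htile{\sdiv})=|\Ext(C_\sdiv)|$ (\cref{cor:vol-of-tile}) varies wildly from tile to tile, and the tile volumes sum to the Eulerian number $E_{k,n-1}$, not to anything of the form (number of tiles)$\times$(fixed unit). So "each tile contributes a single unit to an additive global invariant" is precisely the unproved key lemma, not something that follows from the cancellation you describe.

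The cancellation you do propose --- matching Parke-Taylor pieces across a facet shared by two \emph{different} tiles --- is also aimed at the wrong place. In the paper, cross-tile facet cancellation appears only in \cref{lem:tilesside} and is used for secondary necessary conditions on tilings, not for the count; the cancellation needed for \cref{prop:weight_tile} is \emph{internal} to one tile (between the two $w$-simplices of that tile sharing a facet on $x_{[i,j-1]}=c$). Moreover, the logical direction of the Parke-Taylor identities is reversed in your plan: in the paper the new identities (\cref{thm:gen-parke-taylor}) are \emph{consequences} of the constancy of the tile weight together with the kermit decomposition, not inputs to it. To repair your argument you would need to either prove the per-tile constancy directly (essentially reproducing the induction/residue argument), or find an independent proof of identities strong enough to imply it; as written, the proposal assumes the theorem's engine rather than supplying it. The T-duality transfer at the end, via \cref{thm:Tduality}, is fine.
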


Recall that the \emph{hypersimplex} $\Delta_{k+1,n}$ is the convex
hull of all $0/1$ vectors $e_I\in \R^n$, where $I$ is a $(k+1)$-element subset of $[n]$.  It is the \emph{uniform matroid polytope} 
and can also be viewed as the image of the positive Grassmannian 
$\Gr_{k+1,n}^{\geq 0}$ under the \emph{moment map}.
To prove the Magic Number Conjecture for $m=2$, we prove 
that each tiling of the hypersimplex $\Delta_{k+1,n}$ by 
positroid polytopes 
has cardinality $\binom{n-2}{k}$, then apply the T-duality theorem from 
\cite[Theorem 11.6]{PSW} (which appears here as \cref{thm:Tduality}).
\begin{mainthm}\label{mainthm:B}
Every positroid tiling of the hypersimplex $\Delta_{k+1, n}$ consists of $\binom{n-2}{k}$ tiles. 
\end{mainthm}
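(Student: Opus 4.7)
The plan is to extract the cardinality from a volume computation, using the volume formulas for positroid polytopes and the new variants of the Parke-Taylor identities advertised in the abstract.

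First, to each positroid tile $T$ of $\Delta_{k+1,n}$ I would associate a cyclic partial order $\preceq_T$ on $[n]$ read off from its decorated permutation (equivalently, its Le diagram or Grassmann necklace), and establish a volume formula of the shape
\[
\Vol(T) = \frac{|\mathcal{L}_c(\preceq_T)|}{(n-1)!},
\]
where $\mathcal{L}_c(\preceq_T)$ denotes the set of circular linear extensions of $\preceq_T$. Geometrically, each extension indexes a unit Parke-Taylor simplex inside $T$, and these PT simplices together triangulate $T$. Summing over any positroid tiling $\mathcal{T}=\{T_1,\dots,T_N\}$ then yields
\[
\sum_{i=1}^N |\mathcal{L}_c(\preceq_{T_i})| = (n-1)! \cdot \Vol(\Delta_{k+1,n}) = A(n-1,k),
\]
consistent with the classical Stanley triangulation of $\Delta_{k+1,n}$ into $A(n-1,k)$ unit simplices being partitioned by the tiling.

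The crux is to refine this identity about volumes into one about cardinalities. The strategy is to construct a rational weight function $w(\sigma)$ on PT simplices satisfying a \emph{local} identity $\sum_{\sigma \in \mathcal{L}_c(\preceq_T)} w(\sigma) = 1$ for every positroid tile $T$, and whose global total $\sum_\sigma w(\sigma)$ over the PT simplices associated to $\Delta_{k+1,n}$ equals $\binom{n-2}{k}$. Summing the local identity over a tiling then gives $N = \binom{n-2}{k}$, independent of the tiling. Such a $w$ is essentially a new variant of the classical Parke-Taylor identity restricted to the $k$-descent sector.

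I expect the main obstacle to be constructing $w$ and verifying the local identity uniformly over \emph{every} possible positroid tile. Classical PT identities assert vanishing of sums over all cyclic orderings and are typically proven by residue or partial fraction arguments; here one needs a finer statement asserting that a specific weighted sum over circular extensions of any cyclic partial order arising from a positroid tile equals $1$. I would approach this by induction on the combinatorial complexity of the tile (for instance, the number of bounded regions of its plabic graph), reducing to simpler tiles via deletion-contraction relations that respect the PT identity, and verifying compatibility at each inductive step. Small cases such as $k=1$ with $n\leq 5$, where the positroid tilings of the hypersimplex can be enumerated by hand, should serve as a sanity check throughout.
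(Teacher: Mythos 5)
Your overall strategy coincides with the paper's: triangulate $\Delta_{k+1,n}$ by the Stanley $w$-simplices, attach to each simplex a Parke--Taylor weight, show the total weight of every positroid tile is one and the same quantity, and conclude that the number of tiles is tiling-independent. But your proposal stops exactly where the real work begins. The ``local identity'' $\sum_{\sigma \in \mathcal{L}_c(\preceq_T)} w(\sigma) = 1$ for \emph{every} tile $T$ is, after normalizing by $(-1)^k\PT(\mathbf{I}_n)$, precisely \cref{prop:weight_tile}, and you only flag it as ``the main obstacle,'' offering an induction on plabic-graph complexity via deletion--contraction that is never carried out (and whose interaction with the tile structure is unclear). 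In the paper this step rests on two nontrivial ingredients that your sketch does not supply: (i) the characterization of exactly which $w$-simplices lie in a given tile, namely those whose total cyclic order is a circular extension of the cyclic order of the tile's bicolored subdivision (\cref{thm:when-w-simp-in-tile}, \cref{cor:w-simp-in-tile-cyclic-order-version}) --- you posit such a description ``read off from the decorated permutation'' without establishing it; and (ii) a pole analysis of $\Omega(\Gamma_\sigma)=\sum_{(w)\in\Ext(C_\sigma)}\PT(w)$: first a cancellation lemma (\cref{lem:no_poles_except_in_bdries_of_polygons}) showing all poles $P_{ij}=0$ with $(i,j)$ not an edge of a black or white polygon cancel, proved by pairing extensions that differ by swapping adjacent $i,j$ and applying the three-term Pl\"ucker relation, and then an induction on $n$ in which one contracts a boundary edge $(i-1,i)$, computes the limit of $(z_i-z_{i-1})$ times the weight, and finishes with a degree count: the difference from $(-1)^k\PT(\mathbf{I}_n)$ is homogeneous of degree $-n$ yet could only have simple poles along the at most $n-3$ internal polygon edges, so it must vanish. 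Your instinct that residue/partial-fraction arguments are the right tool is correct, but without these steps the crux of the theorem is unproven.

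A second, smaller gap concerns the normalization. Even granting the local identity, what follows immediately is only that all positroid tilings of $\Delta_{k+1,n}$ have equal cardinality; to identify that common value as $\binom{n-2}{k}$ you must either compute the global weight $\Omega(\Delta_{k+1,n})$ directly or exhibit one tiling of that size. The paper does the latter, anchoring the count with the kermit tilings of \cref{prop:tilings}. Your proposal asserts the global total equals $\binom{n-2}{k}$ but gives no mechanism for establishing it.
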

The above theorem generalizes the result from \cite{SW21} that every finest regular positroid subdivision of $\Delta_{k+1,n}$ has cardinality 
$\binom{n-2}{k}.$ 
Generalizing \cref{mainthm:B}, we also prove that 
if $\Gamma_{\mathcal{M}}$ is a full-dimensional positroid polytope,
all of its tilings have the same cardinality, see \cref{cor:positroidpolytopes}.

We also 
establish some necessary conditions for a collection of cells of $\Gr_{k,n}^{\geq 0}$ to give rise to a tiling of $\Ank$ (equivalently, $\Delta_{k+1, n}$). These results appear in \cref{lem:tilesside} and \cref{prop:covering-ngon-necessary}, and use the combinatorics of \emph{bicolored subdivisions} (cf. \cref{def:bicolored}).

 Our third main result is a formula for the volume of certain polytopes as the number of \emph{circular extensions} of a \emph{partial cyclic order}. Circular extensions and partial cyclic orders are analogues of linear extensions and partial orders.
 The following result appears as \cref{cor:vol-of-tile}; see \cref{prop:volume} for a generalization.
 \begin{mainthm}
Let $\sigma$ be a \emph{bicolored subdivision} (see \cref{def:bicolored}), let $\Gamma_{\sigma}$ be the corresponding positroid polytope (see \cref{thm:hypersimplex-tiles-ineq-facets}), and let $C_{\sigma}$ be the corresponding partial cyclic order (see \cref{def:cyclic-from-perm-subdiv}).  Then the normalized volume of $\Gamma_{\sigma}$ equals the number of circular extensions of $C_{\sigma}.$
 \end{mainthm}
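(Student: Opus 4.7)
The plan is to realize $\Gamma_\sigma$ as a triangulation by Parke-Taylor simplices indexed by circular extensions of $C_\sigma$. For a total cyclic order $\tau$ on $[n]$, the Parke-Taylor polytope $\PT_\tau$ is a simplex of normalized volume $1$, and the simplices $\{\PT_\tau\}$ (intersected with the hyperplane $x_1+\cdots+x_n=k+1$) triangulate the hypersimplex $\Delta_{k+1,n}$. The goal is to show that the subcollection $\{\PT_\tau : \tau\in \Ext(C_\sigma)\}$ triangulates $\Gamma_\sigma$; summing unit normalized volumes then yields the claim. I expect this corollary to follow as a specialization of the more general \cref{prop:volume}, so my task reduces to aligning the partial cyclic order constructed from the bicolored subdivision with the one that proposition attaches to $\Gamma_\sigma$.

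The central step is a dictionary between facet inequalities of $\Gamma_\sigma$ and cyclic relations of $C_\sigma$. Using the explicit facet description from \cref{thm:hypersimplex-tiles-ineq-facets}, each non-trivial facet of $\Gamma_\sigma$ is cut out by a cyclic-interval inequality of the form $\sum_{i\in S}x_i \leq c$ or $\sum_{i\in S}x_i \geq c$, where $S$ is a cyclic interval determined by an arc of $\sigma$. I would verify that, under the definition of $C_\sigma$ in \cref{def:cyclic-from-perm-subdiv}, such an inequality holds on $\PT_\tau$ precisely when $\tau$ respects the corresponding cyclic relation of $C_\sigma$. Consequently, $\PT_\tau \subseteq \Gamma_\sigma$ if and only if $\tau$ is a circular extension of $C_\sigma$. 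With this equivalence, interior-disjointness of the collection $\{\PT_\tau : \tau\in \Ext(C_\sigma)\}$ is inherited from the ambient Parke-Taylor triangulation of $\Delta_{k+1,n}$, and the covering property follows because every point of $\Gamma_\sigma$ already lies in some Parke-Taylor simplex of $\Delta_{k+1,n}$, and that simplex must be indexed by a circular extension of $C_\sigma$ by the previous step.

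The main obstacle I anticipate is the combinatorial bookkeeping required to pin down the above dictionary: verifying that arcs of both colors in the bicolored subdivision $\sigma$ translate exactly into the cyclic relations generating $C_\sigma$, and handling the frozen boundary arcs so that no extraneous relations are introduced on either side. Once this correspondence is tight, the volume identity $\Vol(\Gamma_\sigma) = |\Ext(C_\sigma)|$ is immediate by additivity of volume over the triangulating Parke-Taylor simplices.
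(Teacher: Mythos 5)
Your plan follows the paper's own route: the simplices you call $\PT_\tau$, indexed by total cyclic orders, are exactly the $w$-simplices $\simp{w}$ of the Stanley triangulation (\cref{prop:StanleyTriangulation}), and the heart of your argument --- that $\simp{w}\subseteq\Gamma_\sigma$ if and only if $C_w$ is a circular extension of $C_\sigma$ --- is precisely \cref{thm:when-w-simp-in-tile} and \cref{cor:w-simp-in-tile-cyclic-order-version}, which the paper proves by matching the arc inequalities $\area(i\to j)\le x_{[i,j-1]}\le \area(i\to j)+1$ of \cref{thm:hypersimplex-tiles-ineq-facets} against the inequality description of $\simp{w}$ (\cref{lem:w-simp-interval-intersection}, \cref{cor:ineq-and-facet-w-simp}). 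So the approach is correct, but two points in your write-up fall short of a proof. First, the ``dictionary'' you defer as combinatorial bookkeeping is the entire technical content: containment reduces to the identity $\area(a\to b)=|I_b\cap[a,b-1]|$ for all compatible arcs, and showing this is equivalent to the white-clockwise/black-counterclockwise condition on $(w)$ requires a genuine induction on the polygon to the left of an arc, together with the lemma that $|I_j\cap[a,b-1]|$ is constant as $j$ runs over each of the two arcs of $(w)$ between $a$ and $b$. Also, deducing the statement ``as a specialization of \cref{prop:volume}'' would be circular, since in the paper that proposition is itself derived from the bicolored case.

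Second, there is a concrete gap in your final count. The triangulation of $\Delta_{k+1,n}$ only involves total cyclic orders with exactly $k+1$ cyclic left descents (those indexed by $D_{k+1,n}$), so your argument directly yields $\Vol(\Gamma_\sigma)=\#\{\tau\in\Ext(C_\sigma):\tau\text{ has }k+1\text{ cyclic descents}\}$. To conclude $\Vol(\Gamma_\sigma)=|\Ext(C_\sigma)|$ you must also show that \emph{every} circular extension of $C_\sigma$ automatically has $k+1$ cyclic descents; otherwise some extensions would correspond to simplices living in a different hypersimplex, whose intersection with the hyperplane $x_{[n]}=k+1$ is lower-dimensional and contributes nothing to the volume, and the count would be off. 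This is exactly \cref{lem:circular-ext-sdiv-fixed-number-descents} in the paper, proved by induction on $n$ by contracting a boundary edge of a polygon of $\sigma$; it is short, but it is a needed ingredient that your proposal does not address.
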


We note that \cite{AJR_cyclicOrders} also studied some polytopes contained in $[0,1]^n$ whose volumes can be 
computed in terms of circular extensions of partial cyclic orders; they were motivated in part by 
\cite[Exercise 4.56(d)]{EC1}.  The results of \cite{AJR_cyclicOrders} were subsequently generalized in \cite{Morales}.  In general our polytopes are different from the polytopes of \cite{AJR_cyclicOrders} although
in a very special case they agree, see 
\cref{rem:AJVR}.

A key tool in our proof of the Magic Number Theorem is the \emph{Parke-Taylor} function\footnote{From the seminal paper of the physicists Stephen J. Parke and T.R. Taylor \cite{PTs}.} associated to a permutation.

\begin{definition}\label{def:PT}
Let $w=w_1 \dots w_n \in S_n$ be a permutation on $[n]$. 
The \emph{Parke-Taylor function} of $w$ is
\begin{equation}
    \PT(w):=\frac{1}{P_{w_1 \, w_2} P_{w_2 \, w_3} \ldots P_{w_{n} \, w_1}},
\end{equation}
where the $P_{ij}$ are Pl\"ucker coordinates on $\Gr_{2,n}$, with the convention that Pl\"ucker coordinates are antisymmetric in their indices.
\end{definition}

For example, if $w=2143$ then 
$$\PT(w) = \frac{1}{P_{21} P_{14} P_{43} P_{32}} = 
-\frac{1}{P_{12} P_{14} P_{34} P_{23}}.$$

We view the Parke-Taylor function $\PT(w)$ 
as a rational function on (the affine cone over) the Grassmannian $\Gr_{2,n}$.  It is well-defined
on (the cone over) the top-dimensional matroid stratum of $\Gr_{2,n}$, that is, the locus 
where no Pl\"ucker coordinates vanish. 
Parke-Taylor functions have numerous connections e.g. with the cohomology of the moduli space $\mathcal{M}_{0,n}$ of $n$ points on the Riemann sphere in relation with \emph{scattering equations} \cite{Cachazo:2013hca} and \emph{Lie polynomials} \cite{Frost:2019fjn}.

As a corollary of our work, we obtain many relations for Parke-Taylor functions (cf. \cref{thm:gen-parke-taylor}), one for each \emph{tricolored subdivision} of an $n$-gon (cf. \cref{def:tricolored}), in terms of circular extensions of certain cyclic partial orders. Moreover, we prove the known \emph{shuffle identities} for Parke-Taylor functions as below.

\begin{proposition*}
Let $I\subseteq [n-1]$ and fix a permutation $u=u_1 \dots u_r$ of $I$ and a permutation $v=v_1 \dots v_{n-r-1}$ of $[n-1]\setminus I$. Let $\Sh_n(u,v)$ be the set of permutations in $S_n$ in which $u_1,\ldots,u_r$ appear in order and $v_1,\ldots,v_{n-r-1}$ appear in order, and which send $n$ to $n$. Then we have
\begin{equation*}
 \sum_{w\in \Sh_n(u,v)} \PT(w) = 0.   
\end{equation*}
\end{proposition*}

The result above appears later as \cref{prop:shufflePT}.    
In the special case that $I=\{1\}$,
this result recovers the ``classical'' \emph{$U(1)$-decoupling identity} for Parke-Taylor functions (cf. \cref{cor:U1}).

\bigskip

The structure of this paper is as follows. In \cref{sec:background} we provide
background on the positive Grassmannian, the hypersimplex, and the amplituhedron, including a well-known triangulation of the hypersimplex into $w$-simplices. In \cref{sec:combtiles} we give a simple combinatorial characterization of exactly which $w$-simplices lie in a given tile for the hypersimplex. In \cref{sec:cyclic}, we introduce partial cyclic orders and use them to rephrase the results of the previous section. In \cref{sec:PT}, we define the weight function of a positroid polytope using Parke-Taylor functions; prove that the weight function of each tile $\Gamma_{\sigma}$ of $\Delta_{k+1,n}$ is constant; and deduce the Magic Number Theorem.  In \cref{sec:combtiling}, we extend the notion of weight function to facets of tiles and prove additional necessary conditions for when a collection of bicolored subdivsions gives rise to a positroid tiling of $\Delta_{k+1,n}.$  In \cref{sec:combPT}, we introduce Parke--Taylor polytopes and prove new relations among Parke--Taylor functions.   
Finally in \cref{sec:G} we give an interpretation of our previous results in terms of the $G$-amplituhedron.
\bigskip

\noindent{\bf Acknowledgements:~} 
This paper is an offshoot from a collaboration between the authors, Chaim Even-Zohar, and Tsviqa Lakrec; the authors are grateful for many informative discussions with Even-Zohar and Lakrec on the amplituhedron.  The authors are also grateful to Nima Arkani-Hamed for inspiring conversations.
MP is supported by the CMSA at Harvard University and at the Institute for Advanced Study by the U.S. Department of Energy under the grant number DE-SC0009988. MP would like to thank Nick Early for useful conversations. 
MSB is supported by the National Science Foundation under Award No.~DMS-2103282.
RT is supported by the ISF grants No.~335/19 and 1729/23.
LW would like to thank her good friends the Park-Taylor family, who have no relation to the Parke-Taylor identity.  She is supported by the National Science Foundation under Award No. 
 DMS-2152991. Any opinions, findings, and conclusions or recommendations expressed in this material are
those of the author(s) and do not necessarily reflect the views of the National Science
Foundation.

\section{Background: the hypersimplex, the $m=2$ amplituhedron, and their tilings}\label{sec:background}

In this section, we review background on the positive Grassmannian, the hypersimplex, and the amplituhedron. We introduce the notion of positroid tiles and tilings of a space $X$ which is the surjective image of the positive Grassmannian. We review the characterization of positroid tiles for the hypersimplex \cite{LPW} and for the $m=2$ amplituhedron \cite{PSW} using the combinatorics of bicolored subdivisions; we also state the T-duality theorem of \cite{PSW} which connects positroid tilings of the hypersimplex and $m=2$ amplituhedron. Finally, we discuss a triangulation of the hypersimplex into $w$-simplices \cite{StanleyTriangulation,SturmfelsGrobner,LamPost}, which is the simultenous refinement of all positroid tilings and is essential to the proofs in later sections.

\subsection{The (positive) Grassmannian}\label{sec:posGrass}

The \emph{Grassmannian} $\Gr_{k,n} = \Gr_{k,n}(\mathbb{R})$
is the space of all $k$-dimensional subspaces of
an $n$-dimensional vector space $\mathbb{R}^n$.
Let $[n]$ denote $\{1,\dots,n\}$, and $\binom{[n]}{k}$ denote the set of all $k$-element 
subsets of $[n]$.
We can
 represent a point $V \in
\Gr_{k,n}$  as the row-span 
of
a full-rank $k\times n$ matrix $C$ with entries in
$\mathbb{R}$.  
Then for $I=\{i_1 < \dots < i_k\} \in \binom{[n]}{k}$, the \emph{Pl\"ucker coordinate} $P_I(C)$ be the $k\times k$ minor of $C$ using the columns $I$. 
The {Pl\"{u}cker coordinates} of $V$ are independent of the choice of matrix
representative $C$ (up to common rescaling). The \emph{Pl\"ucker embedding} 
$V \mapsto \{P_I(C)\}_{I\in \binom{[n]}{k}}$
embeds  $\Gr_{k,n}$ into
projective space\footnote{
We will sometimes abuse notation and identify $C$ with its row-span.}. 

\begin{definition}[Positive Grassmannian]\label{def:positroid}\cite{lusztig, postnikov}
We say that $V\in \Gr_{k,n}$ is \emph{totally nonnegative}
     if (up to a global change of sign)
       $P_I(C) \geq 0$ for all $I \in \binom{[n]}{k}$.
Similarly, $V$ is \emph{totally positive} if $P_I(C) >0$ for all $I
      \in \binom{[n]}{k}$.
We let $\Grk$ and $\Gr_{k,n}^{>0}$ denote the set of
totally nonnegative and totally positive elements of $\Gr_{k,n}$, respectively.  
The set $\Grk$ is called the \emph{totally nonnegative}  \emph{Grassmannian}, or
       sometimes just the \emph{positive Grassmannian}.
\end{definition}

If we partition $\Grk$ into strata based on which Pl\"ucker coordinates are strictly
positive and which are $0$, we obtain a cell decomposition of $\Grk$
into \emph{positroid cells} \cite{postnikov} that glue together to form a CW complex \cite{PSWv1}.
Each positroid cell $S$ gives rise to a matroid $\mathcal{M}$, whose bases are precisely
the $k$-element subsets $I$ such that the Pl\"ucker coordinate
$P_I$ does not vanish on $S$; the matroid $\mathcal{M}$ 
is called a \emph{positroid}.

\subsection{Positroid tiles and tilings}
Given any surjective map $\phi$ from a cell complex onto a topological space $X$, say of dimension $d$, it is natural to try to decompose $X$ using images
of $d$-dimensional cells under $\phi$.   
This leads to the definition of \emph{$\phi$-induced tiling} \cite[Definition 2.19]{WilliamsICM} which we apply
in the case that our cell complex is the positive Grassmannian.\footnote{There are many reasonable
variations of this definition.  One might want to relax the injectivity
assumption, or to impose further restrictions on how boundaries of the 
images of cells should overlap. Note that in the literature, positroid tilings are sometimes
called \emph{(positroid) triangulations}. We avoid this terminology in order to avoid confusion with the notion of e.g. polytopal triangulations.}  
When $\phi$ is the amplituhedron map $\tilde{Z}$,
tilings were first studied in 
\cite{arkani-hamed_trnka} (where they were referred to as \emph{triangulations}).

\begin{definition}\label{def:tiling0} 
	Let $\phi: \Grk \to X$ be 
 a continuous surjective map where $\dim X=d$. The image ${\phi(S)}$ of a positroid cell $S \subset \Grk$ is an \emph{open tile} for $X$ (with respect to $\phi$) if $\phi$ is injective on $S$ and $\dim S =d$. The closure $\overline{\phi(S)}$ of an open tile is called a \emph{tile} for $X$.
 
A \emph{positroid tiling} of $X$ (with respect to $\phi$) is a collection $\{\overline{\phi(S)}\}_{S \in \mathcal{C}}$ of tiles 
satisfying:
\begin{itemize}
	\item (disjointness): open tiles $\phi(S)$
		and $\phi(S')$ for distinct positroid cells $S$ and $S'$ in $\mathcal{C}$ are disjoint
              \item (covering): $\cup_{S \in \mathcal{C}}\overline{\phi(S)} = X$.
	\end{itemize}
\end{definition}

We will focus here on $\phi$-induced tilings for two different maps $\phi$: the moment map $\mu$ and the amplituhedron map $\tZ$, which map $\Grk$ onto the hypersimplex and the amplituhedron, respectively. In the subsequent subsections, we will discuss the combinatorics of tiles and tilings for these choices of $\phi$, which turn out to be closely related.

\subsection{The moment map and the hypersimplex}\label{subsec:HYS}
We use $\{e_1, \dots, e_n\}$ to denote the standard basis of $\R$, and define $e_I := \sum_{i \in I} e_i \in \R^n$. If $x \in \R^n$ and $I \subset [n]$, we use the notation $x_I:=\sum_{i \in I} x_i$. For $i \neq j \in [n]$, we use the notation $[i,j]:=\{i, i+1, \dots, j-1, j\}$ for the cyclic interval from $i$ to $j$.

The \emph{moment map} from the Grassmannian $\Gr_{k+1,n}$ to $\R^n$
is defined as follows.
\begin{definition}[The moment map] \label{def:mom_map}
	Let $A$ be a $(k+1) \times n$ matrix representing a point of 
	$\Gr_{k+1,n}$.
	The \emph{moment map} 
	$\mu: \Gr_{k+1,n} \to \R^n$ is defined by 
	$$\mu(A) = \frac{ \sum_{I \in \binom{[n]}{k+1}} |p_I(A)|^2 e_I}
	{\sum_{I \in \binom{[n]}{k+1}} |p_I(A)|^2}.$$
\end{definition}

It is well-known that the image of the Grassmannian $\Gr_{k+1,n}$ under the moment
map is the \emph{hypersimplex}
$\Delta_{k+1,n}$, defined below. 
If one restricts the moment map to $\Gr_{k+1,n}^{\geq 0}$ then the image is again the hypersimplex
$\Delta_{k+1,n}$ 
\cite[Proposition 7.10]{tsukerman_williams}.

\begin{definition}[The hypersimplex]
The \emph{$(k+1,n)$-hypersimplex} 
$\Delta_{k+1,n} \subset \R^n$
 is the convex hull of the points $e_I$ where $I$ runs over $\binom{[n]}{k+1}$. 
\end{definition}

See \cref{fig:HYS24} for an example of the hypersimplex. The hypersimplex $\Delta_{k+1,n}$ is $(n-1)$-dimensional, as it is contained in the hyperplane $x_{[n]}=k+1$.

\begin{remark}\label{rem:hypercube}
Under the projection $\pi:(x_1,\dots,x_n) \mapsto (x_1,\dots,x_{n-1})$,
we see that the hypersimplex is linearly equivalent to the polytope in
$\R^{n-1}$ defined by
$$\{(x_1,\dots,x_{n-1}) \ \vert \
0 \leq x_i \leq 1; k \leq x_{[n-1]} \leq k+1\}.$$
That is, the projected hypersimplex
$\pi(\Delta_{k+1,n})$ is the slice of the
unit hypercube $\mbox{\mancube}_{n-1}$ contained between the hyperplanes
$x_{[n-1]} = k$ and $x_{[n-1]}= k+1$.
\end{remark} 

\begin{figure}[h]
\includegraphics[width=0.22\textwidth]{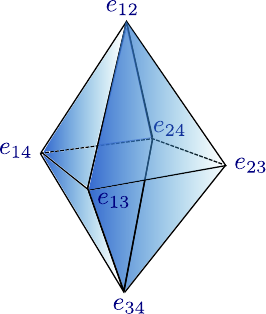}
	\caption{The hypersimplex $\Delta_{2,4}$ projected in $\mathbb{R}^3$.}
	\label{fig:HYS24}
\end{figure}

The moment map images of positroid cells, and their closures, can easily be described using the combinatorics of positroids.

\begin{definition}\label{def:positroid-polytope}
    Let $\mathcal{M} \subset \binom{[n]}{k+1}$ be a positroid. The \emph{positroid polytope} $\Gamma_{\mathcal{M}}$ is defined as 
    \[\Gamma_{\mathcal{M}} := \mbox{conv}\{e_I : I \in \mathcal{M}\}.\]
    We denote by $\Gamma_{\mathcal{M}}^\circ$ the relative interior of $\Gamma_{\mathcal{M}}$.
\end{definition}

\begin{proposition}[{\cite[Proposition 7.10]{tsukerman_williams}}]
    Let $S \subset \Gr_{k+1, n}^{\ge 0}$ be a positroid cell, and $\mathcal{M}$ its corresponding positroid. Then 
    \[\mu(S)= \htile{\mathcal{M}}^\circ \qquad \text{ and } \qquad \overline{\mu(S)}=\htile{\mathcal{M}}. \]
\end{proposition}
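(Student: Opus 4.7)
The plan is to exploit the action of the positive torus $T := (\R_{>0})^n/\R_{>0}$ on $\Gr_{k+1,n}^{\ge 0}$ by column rescaling, together with the explicit formula for $\mu$ from \cref{def:mom_map} and the defining property of a positroid cell.

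For the first inclusion $\mu(S)\subseteq \Gamma_{\mathcal{M}}^\circ$, I would argue as follows. Any matrix representative $A$ of a point in $S$ satisfies $p_I(A)\neq 0$ precisely when $I\in\mathcal{M}$, by the definition of the positroid associated to $S$. Substituting into the formula for $\mu(A)$, only terms indexed by $I\in\mathcal{M}$ contribute, and each contributes with strictly positive weight. Hence $\mu(A)$ is a convex combination of the vertices $e_I$ of $\Gamma_{\mathcal{M}}$ in which every vertex appears with strictly positive coefficient, so $\mu(A)\in\Gamma_{\mathcal{M}}^\circ$.

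The reverse inclusion $\Gamma_{\mathcal{M}}^\circ\subseteq\mu(S)$ is the main obstacle. The key observation is that $T$ acts on $S$ by $A\mapsto A\cdot\mathrm{diag}(t)$ (column rescaling preserves the vanishing pattern of Pl\"ucker coordinates, hence preserves $S$), and that $\mu$ is equivariant in the sense that
\[ \mu(A\cdot\mathrm{diag}(t)) \;=\; \frac{\sum_{I\in\mathcal{M}} |p_I(A)|^2 \prod_{i\in I} t_i^2\, e_I}{\sum_{I\in\mathcal{M}} |p_I(A)|^2 \prod_{i\in I} t_i^2}. \]
Fixing any $A_0\in S$ and setting $\lambda_I := |p_I(A_0)|^2\prod_{i\in I} t_i^2$, I would take logarithms to reduce the surjectivity question to the statement that the linear map $\R^n/\R \to \R^\mathcal{M}/\R$ sending $(\log t_i)$ to $(2\sum_{i\in I}\log t_i)_I$ (modulo the diagonal) has image whose projection onto the affine span of $\{e_I:I\in\mathcal{M}\}$ is surjective. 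This is a classical rank statement for matroid polytopes: the affine span of the vertex indicators $\{e_I:I\in\mathcal{M}\}$ has dimension $\dim\Gamma_{\mathcal{M}}$, matching the rank of the above map. Combined with the fact that every point of $\Gamma_{\mathcal{M}}^\circ$ admits an expression as a strictly positive convex combination of all the $e_I$, this yields surjectivity onto $\Gamma_{\mathcal{M}}^\circ$. In essence this is a positive-real version of the Gelfand-MacPherson result that the torus orbit of a matroid-stratum point maps onto the interior of the matroid polytope; the delicate point is checking that restriction to the positive torus does not shrink the image, which holds because $T$ acts transitively on its own orbits and the moment polytope interior is an open semialgebraic set.

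The second equation is then immediate from the first, since the closure of the relative interior of a convex polytope is the polytope itself: $\overline{\mu(S)} = \overline{\Gamma_{\mathcal{M}}^\circ} = \Gamma_{\mathcal{M}}$.
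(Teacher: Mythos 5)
Since the paper itself does not prove this proposition but imports it from Tsukerman--Williams, I am judging your argument on its own terms. Your first inclusion $\mu(S)\subseteq \Gamma_{\mathcal{M}}^\circ$ is correct: for $A$ representing a point of $S$ the nonvanishing Pl\"ucker coordinates are exactly those indexed by $\mathcal{M}$, so $\mu(A)$ is a convex combination of the $e_I$, $I\in\mathcal{M}$, with \emph{all} coefficients strictly positive, and the relative interior of the convex hull of a finite point set is precisely the set of such combinations. The torus equivariance formula and the observation that positive column rescaling preserves each positroid cell are also correct, and the final step $\overline{\mu(S)}=\overline{\Gamma_{\mathcal{M}}^\circ}=\Gamma_{\mathcal{M}}$ is immediate once the first equality is in hand. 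So the overall strategy (restrict to a positive torus orbit and identify its moment-map image) is the standard, and indeed the expected, route.

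The genuine gap is in the surjectivity step $\Gamma_{\mathcal{M}}^\circ\subseteq\mu(S)$. What must be shown is that every $x\in\Gamma_{\mathcal{M}}^\circ$ equals $\bigl(\sum_{I\in\mathcal{M}} c_I\prod_{i\in I}t_i^2\, e_I\bigr)/\bigl(\sum_{I\in\mathcal{M}} c_I\prod_{i\in I}t_i^2\bigr)$ for some $t\in(\R_{>0})^n$, with the $c_I=|p_I(A_0)|^2>0$ \emph{fixed}. The weight vectors $\lambda_I=c_I\prod_{i\in I}t_i^2$ realizable this way form only an (at most $n$-dimensional) subfamily of the $(|\mathcal{M}|-1)$-dimensional simplex of all positive weights, so the fact that every point of $\Gamma_{\mathcal{M}}^\circ$ is \emph{some} strictly positive convex combination of the $e_I$ does not imply it is a realizable one; your "combined with" step is a non sequitur. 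Likewise, the rank computation you propose only shows that the differential of $\xi=\log t\mapsto \mu(t)$ (the gradient of the log-partition function $\log\sum_I c_I e^{2\langle\xi,e_I\rangle}$, whose Jacobian is a covariance matrix) is surjective onto the tangent space of the affine span, i.e. that the image is \emph{open} in the affine hull of $\{e_I\}$ --- not that it is all of $\Gamma_{\mathcal{M}}^\circ$. The missing ingredient is the actual content of Birch's theorem (equivalently, the toric moment-map theorem): either the Legendre-transform/strict-convexity argument showing the gradient map surjects onto the relative interior of the Newton polytope, or an openness-plus-closedness argument in which one checks that if $t^{(m)}$ leaves every compact set of the torus modulo its stabilizer, then $\mu(t^{(m)})$ accumulates only on proper faces of $\Gamma_{\mathcal{M}}$, so the image is closed in $\Gamma_{\mathcal{M}}^\circ$ and, being open and nonempty in the connected set $\Gamma_{\mathcal{M}}^\circ$, equals it. Your appeal to Gelfand--MacPherson does not fill this hole: that result concerns the moment-map image of the \emph{closure} of the complex torus orbit, and the assertion that the positive real torus orbit alone already covers the whole relative interior is exactly the point at issue; "T acts transitively on its own orbits and $\Gamma_{\mathcal{M}}^\circ$ is open semialgebraic" is not an argument for it. If you either prove or cite Birch's theorem at this juncture, the rest of your proof goes through.
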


For the reader's convenience, below we specialize \cref{def:tiling0} with $\phi$ being the moment map $\mu$ and $X$ being the hypersimplex $\Delta_{k+1,n}$.
\begin{definition} Let $\mu: \Gr^{\geq 0}_{k+1,n} \to \Delta_{k+1,n}$ be the moment map.
 The positroid polytope $\htile{\mathcal{M}} =\overline{\mu(S)}$, which is the closure of the image of a positroid cell $S$, is a \emph{tile} for $\Delta_{k+1,n}$ if $\mu$ is injective on $S$ and $\dim S=n-1$.
 A \emph{positroid tiling} of $\Delta_{k+1,n}$ is a collection of tiles with disjoint interiors whose union is $\Delta_{k+1, n}$.
\end{definition}

\subsection{Tiles of $\Delta_{k+1,n}$ from bicolored subdivisions}

In this section we review the characterization
of tiles of $\Delta_{k+1,n}$ from \emph{bicolored subdivisions}. Here, we will describe the tiles directly as subsets of $\R^n$; see \cite[Section 3]{LPW} for a discussion of the cells whose moment map images are the tiles. We first review the necessary combinatorics of bicolored subdivisions.

\begin{figure}[h]
\includegraphics[width=0.2\textwidth]{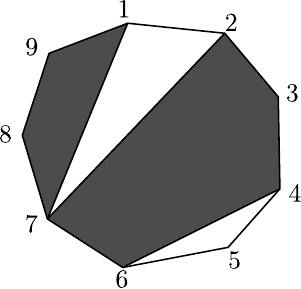}
        \caption{A bicolored subdivision of type $(5,9)$.}
        \label{fig:unpunctured}
\end{figure}

\begin{definition}[Bicolored subdivisions and arcs]\label{def:bicolored} 
Let  $\mathbf{P}_n$ be a convex $n$-gon with vertices labeled from
$1$ to $n$ in clockwise order.
       A \emph{bicolored subdivision $\sdiv$} is a partition of 
       $\mathbf{P}_n$ into black and white polygons 
       such that two polygons sharing an edge have different colors.  We say that $\sdiv$ has 
       \emph{type $(k,n)$} or that \emph{$\sdiv$ is a $(k,n)$-bicolored subdivision} if any triangulation of the black polygons  
       consists of exactly $k$ black triangles.  See \cref{fig:unpunctured}.

       Given a pair of vertices $i,j$ of $\mathbf{P}_n$, we say that the arc $i\to j$ is:
       \begin{itemize}
           \item \emph{compatible} with $\sigma$ if the arc either bounds a polygon 
           or lies entirely inside a polygon of $\sigma$;
           \item \emph{facet-defining} if it bounds a black polygon of 
           $\sigma$ to its left;
           \item \emph{internal} if it is not a boundary edge of $\mathbf{P}_n$. 
       \end{itemize}
       Suppose an arc $i \to j$ is compatible with $\sdiv$. The \emph{area to the left of }$i \to j$, denoted by  
       $\area_{\sdiv}(i \to j)$ or $\area(i \to j)$, is the number of black triangles to the left of $i \to j$ in any triangulation of the black polygons of $\sdiv.$
       
\end{definition}
\begin{example}
For the subdivision $\sdiv$ pictured in \cref{fig:unpunctured}, $3 \rightarrow 7$, $6 \rightarrow 4$, $1 \rightarrow 2$ are compatible, whereas $1 \rightarrow 4$ is not; $6 \rightarrow 4$, $2 \rightarrow 7$, $4 \rightarrow 3$ are facet-defining arcs, whereas $1 \rightarrow 2$ is not facet-defining. Moreover, $\area (3 \rightarrow 7)=2$, $\area (6 \rightarrow 4)=5$, and $\area (1 \rightarrow 2)=0$.
\end{example}

\begin{remark}
 Bicolored subdivisions of type $(k,n)$ are in bijection with \emph{separable permutations} on $[n-1]$ with $k$ descents \cite[Corollary 12.6]{PSW}, which are enumerated by a refinement of the \emph{large Schr\"oder numbers} \cite[A175124]{OEIS}.   
\end{remark}

The following characterization of tiles of $\Delta_{k+1,n}$ in terms of bicolored subdivisions of type $(k,n)$ appears in \cite{PSW} as Proposition 9.5 and 9.6; the characterization of tiles of 
$\Delta_{k+1,n}$ in terms of tree plabic graphs (which are dual to bicolored subdivisions) had previously appeared in \cite[Propositions 3.15, 3.16]{LPW}, confirming conjectures of \cite{Lukowski:2019sxw}.
\begin{theorem}\label{thm:hypersimplex-tiles-ineq-facets}
The positroid tiles for $\Delta_{k+1,n}$ are in bijection with bicolored subdivisions of type $(k,n)$. Moreover, if $\sigma$ is such a bicolored subdivision, then the corresponding tile $\Gamma_\sigma$ is cut out of $\mathbb{R}^n$ by the equality $x_{[n]}=k+1$ and by the following inequalities: 
\begin{equation*}
 \area(i\to j) \leq x_{[i,j-1]}\leq \area(i\to j)+1 \qquad \text{ for any compatible arc }i\to j \text{ of }\sigma 
\end{equation*}
or, alternatively, by the following facet inequalities:
\begin{enumerate}
    \item $x_i \geq 0$,  if there is a white polygon of $\sigma$ with vertex $i$;
    \item $x_{[i,j-1]}\geq \area(i\to j)$, if $i \rightarrow j$ is a facet-defining arc of $\sigma$. 
\end{enumerate}

\end{theorem}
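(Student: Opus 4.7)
The plan is to establish both parts simultaneously by constructing, for each bicolored subdivision $\sigma$ of type $(k,n)$, an explicit positroid $\M_\sigma$ and showing that: (i) the associated positroid cell $S_{\M_\sigma}$ has dimension $n-1$, (ii) the moment map is injective on $S_{\M_\sigma}$, (iii) the positroid polytope $\Gamma_{\M_\sigma}$ has exactly the claimed inequality description, and (iv) every tile of $\Delta_{k+1,n}$ arises this way. I would define $\M_\sigma$ combinatorially via its bases:
\begin{equation*}
\M_\sigma \;=\; \Bigl\{\, I\in\tbinom{[n]}{k+1} \;:\; \area(i\to j)\le |I\cap[i,j-1]|\le \area(i\to j)+1 \text{ for every arc } i\to j \text{ compatible with }\sigma\,\Bigr\},
\end{equation*}
and verify it is a matroid (hence a positroid) by exhibiting a plabic graph $G_\sigma$ dual to $\sigma$, obtained by placing a trivalent white vertex in each black polygon and connecting it to the boundary vertices of that polygon. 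A direct edge count using Euler's formula (black polygons contribute $k$ internal white vertices and the right number of edges) shows $\dim S_{G_\sigma}=n-1$.

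For injectivity of $\mu$ on $S_{\M_\sigma}$, I would proceed by induction on the number of polygons of $\sigma$. In the base case where $\sigma$ consists of a single black polygon, $\Gamma_{\M_\sigma}$ is the simplex $\Delta_{k+1,n}$ itself and the cell is the top-dimensional positroid cell, which is known to map injectively onto the interior of the hypersimplex. For the inductive step, I would pick an ``ear'' of $\sigma$ (a polygon meeting the boundary of $\mathbf{P}_n$ along all but one edge) and show that the injectivity reduces to the injectivity on a smaller bicolored subdivision, using that the corresponding inequalities partition the cell into layers parameterized by a single extra coordinate. Combined with the equality $\dim\Gamma_{\M_\sigma}=\dim S_{\M_\sigma}=n-1$ (from the inequality description proved next), this gives that $\Gamma_{\M_\sigma}$ is a tile.

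For the inequality description, the containment $\Gamma_{\M_\sigma}\subseteq\{x:x_{[n]}=k+1,\ \area(i\to j)\le x_{[i,j-1]}\le\area(i\to j)+1\ \forall i\to j \text{ compatible}\}$ is immediate from the definition of $\M_\sigma$ and convexity. For the reverse containment, I would show the inequalities labeled (1) and (2) in the theorem are facet-defining and are sufficient to cut out $\Gamma_{\M_\sigma}$, by (a) identifying codimension-one faces of $\Gamma_{\M_\sigma}$ with either a boundary vertex that lies on a white polygon (giving $x_i\ge 0$) or an internal facet-defining arc (giving $x_{[i,j-1]}\ge \area(i\to j)$), and (b) verifying that any compatible arc's inequality is implied by the facet-defining ones, since a non-facet arc lies inside a single polygon of $\sigma$ and its inequality decomposes as a sum of inequalities along the boundary arcs of that polygon.

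Finally, for surjectivity of the assignment $\sigma\mapsto\Gamma_\sigma$ onto tiles, I would argue that if $S$ is a positroid cell of dimension $n-1$ on which $\mu$ is injective, then the connected positroid polytope $\Gamma_\M$ must admit a facet structure consistent with some bicolored subdivision; injectivity of $\mu$ forces no ``overlapping'' behavior, which one can translate into the planarity condition on the graph $G_\sigma$. Distinct bicolored subdivisions give distinct positroids because the set of facet-defining arcs can be read off from the polytope. \textbf{The main obstacle} I expect is step (ii), the injectivity of $\mu$ on $S_{\M_\sigma}$: dimension counts alone are insufficient, and the inductive ear-removal must be carried out carefully to track how the moment-map fibers degenerate; this is the essential combinatorial-geometric input that distinguishes tiles from arbitrary positroid subdivisions. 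The exhaustiveness step (iv) is similarly delicate, as it requires ruling out exotic positroids that are not of bicolored type.
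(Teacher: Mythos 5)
First, note that the paper does not prove this statement at all: it is imported as background, cited from \cite{PSW} (Propositions 9.5 and 9.6) and, in the plabic-tree formulation, from \cite{LPW} (Propositions 3.15, 3.16). So there is no in-paper proof to match your argument against; your proposal has to stand on its own, and as it stands it has genuine gaps at exactly the two hard points, which you yourself flag. For injectivity of $\mu$ on $S_{\M_\sigma}$, the ear-removal induction is not an argument: you never say what the ``layers parameterized by a single extra coordinate'' are, and the fibers of the moment map over a point of the polytope are not controlled by one coordinate. The route used in the cited works is different and is the essential input you are missing: the fiber of $\mu$ through a point of a positroid cell is (the positive part of) a torus orbit, so $\mu$ is injective on $S_{\M}$ precisely when $\dim S_{\M}=\dim \Gamma_{\M}$; for an $(n-1)$-dimensional cell this reduces injectivity to a purely dimension-theoretic statement, and the classification of the $(n-1)$-dimensional cells with $\dim S_{\M}=\dim\Gamma_{\M}$ is then carried out via reduced plabic graphs (the relevant cells are exactly those whose reduced plabic graphs are trees, dual to bicolored subdivisions). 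Your exhaustiveness step (iv) suffers from the same omission: ``injectivity of $\mu$ forces no overlapping behavior, which one can translate into the planarity condition'' is an assertion, not a proof; without the torus-orbit/dimension criterion and the plabic-graph classification you have no mechanism for ruling out other $(n-1)$-dimensional cells on which $\mu$ might be injective.

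There are also smaller unproved steps. You must verify that your $\M_\sigma$, defined by the area inequalities, is actually a matroid (and then a positroid); this is not automatic from the inequality description and in the literature it is obtained from the plabic graph, not postulated. Your dual graph construction is also off as written: a white vertex joined to all vertices of a black $r$-gon with $r>3$ is not trivalent, so either you triangulate the black polygons first or you allow higher-degree white vertices, and the dimension count changes accordingly. Finally, for the facet description, showing that inequalities (1) and (2) are facet-defining and that all other compatible-arc inequalities are implied requires an actual argument (e.g., exhibiting enough vertices of $\Gamma_\sigma$ on each proposed facet hyperplane, in the spirit of the paper's \cref{cor:ineq-and-facet-w-simp} for $w$-simplices); decomposing a non-facet arc's inequality ``as a sum of inequalities along the boundary arcs'' only gives one direction of the implication and only for the lower bounds. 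In short, the skeleton is reasonable, but the two steps you identify as obstacles are precisely where the known proofs do real work with tools (moment-map fibers as torus orbits, plabic trees) that your outline does not supply.
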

\begin{example}
    For $\sigma$ in \cref{fig:unpunctured}, if $x \in \Gamma_{\sigma}$ then $2 \leq x_{[3,6]} \leq 3$ ($3 \rightarrow 7$ is a compatible arc).
    Some facet inequalities of $\Gamma_{\sigma}$ are: $x_5 \geq 0$ ($5$ is a vertex of a white polygon of $\sigma$), $x_{[6,3]}\geq 5$ ($6 \rightarrow 4$ is a facet-defining arc).
\end{example}

\subsection{The amplituhedron}

Building on \cite{abcgpt,hodges},
Arkani-Hamed and Trnka
\cite{arkani-hamed_trnka}
introduced
the \emph{(tree) amplituhedron}, which they defined as 
the image of the positive Grassmannian under a positive linear map.
Let $\Mat_{n,p}^{>0}$ denote the set of $n\times p$ matrices whose maximal minors
are positive.

\begin{definition}[Amplituhedron]\label{definition_amplituhedron}
Let $Z\in \Mat_{n,k+m}^{>0}$, where $k+m \leq n$.
    The \emph{amplituhedron map}
$\tilde{Z}:\Gr_{k,n}^{\ge 0} \to \Gr_{k,k+m}$
        is defined by
        $\tilde{Z}(C):=CZ$,
    where
 $C$ is a $k \times n$ matrix representing an element of
        $\Gr_{k,n}^{\ge 0}$,  and  $CZ$ is a $k \times (k+m)$ matrix representing an
         element of $\Gr_{k,k+m}$.
        The \emph{amplituhedron} $\mathcal{A}^Z_{n,k,m} \subset \Gr_{k,k+m}$ is the image
$\tilde{Z}(\Gr_{k,n}^{\ge 0})$.
\end{definition}

It is convenient to use the following coordinates when discussing points in $\AA$. 
\begin{definition}[Twistor coordinates]\label{def:tw_coords}
Fix
  $Z \in \Mat_{n,k+m}^{>0}$
   with rows  $Z_1,\dots, Z_n \in \R^{k+m}$.
   Given  $Y \in \Gr_{k,k+m}$
   with rows $y_1,\dots,y_k$,
   and $\{i_1,\dots, i_m\} \subset [n]$,
   we define the \emph{twistor coordinate} $
	\llrr{{Yi_1} {i_2} \cdots {i_m}}$
        to be  the determinant of the
        matrix with rows
        $y_1, \dots,y_k, Z_{i_1}, \dots, Z_{i_m}$.
\end{definition}
\noindent Note that the twistor coordinates are defined only up to a common scalar multiple.

Specializing \cref{def:tiling0}, with $\phi$ being the amplituhedron map $\tilde{Z}$ and $X$ being the amplituhedron $\AA$ we have the following.

\begin{definition} Let $\tilde{Z}: \Gr^{\geq 0}_{k,n} \to \Gr_{k,k+m}$ be the amplituhedron map.
 The image $\gto{S} :={\tilde{Z}(S)}$ of a positroid cell $S$ is an \emph{open tile} for $\AA$ if $\tilde{Z}$ is injective on $S$ and $\dim S =mk$. The closure $\atile{S} :=\overline{\tilde{Z}(S)}$ of an open tile is a \emph{tile} for $\AA$.
 A \emph{positroid tiling} of $\AA$ is a collection $\{\atile{S}\}_{S \in \mathcal{C}}$ of tiles whose union is $\AA$ and such that the open tiles $\{\gto{S}\}_{S \in \mathcal{C}}$ are pairwise disjoint.
\end{definition}

In this paper it is convenient to work with ``all-$Z$'' tilings, as defined below.

\begin{definition}[All-$Z$ tilings]\label{def:tiling}
We call $\{Z_{S} \}_{S\in \mathcal{C}}$ an \emph{all-$Z$ tiling} of the amplituhedron $\mathcal{A}_{n,k,m}$ if 
		$\{Z_{S} \}_{S\in \mathcal{C}}$ is a tiling of $\mathcal{A}_{n,k,m}(Z)$
  for all  $Z\in \Mat_{n,k+m}^{>0}$.
\end{definition}

\begin{remark}
In this paper we will restrict our attention to all-$Z$ tilings\footnote{Currently, there is no known example of tiling which is not an all-Z tiling.}.  
However, for ease of notation, we often drop the phrase ``all-$Z$.''  When we refer to tilings $\{Z_S\}$ we will always mean all-$Z$ tilings.
\end{remark}

There is a natural notion of  \emph{facet} of a tile, 
generalizing the notion of facet of a polytope. 

\begin{definition}[Facet of a cell and a tile]\label{def:facet2} 
Given two positroid cells $S'$ and $S$, we say that 
$S'$ is a \emph{facet} of $S$ if 
$S' \subset \partial{S}$ and $S'$ has codimension $1$ in $\overline{S}$.
If $S'$ is a facet of $S$ and $Z_S$ is a tile of $\AA$, we say that $Z_{S'}$ is a
	\emph{facet} 
of $Z_{S}$ if 
      $\gt{S'} \subset \partial \gt{S}$ and has codimension 1 in $\gt{S}$.
\end{definition}

In this paper, we will be concerned with the amplituhedron when $m=2$. All subsequent sections are in this setting.

\subsection{Tiles of $\mathcal{A}_{n,k,2}$ from bicolored subdivisions}

In \cite[Theorem 4.25]{PSW}, a subset of the authors of the present work 
showed that positroid tiles for 
$\mathcal{A}_{n,k,2}(Z)$ are in bijection with the bicolored subdivisions of type
$(k,n),$ by associating a plabic graph to each bicolored subdivision and showing 
that these plabic graphs exactly correspond to the $2k$-dimensional cells on 
which the amplituhedron map is injective.  Moreover, \cite[Theorem 4.28]{PSW}
gives a concise description of these positroid tiles in terms of inequalities.
These two results are summarized in the following theorem.

\begin{theorem}\label{thm:amp-tiles-facets-inequalities}
The positroid tiles for $\mathcal{A}_{n,k,2}(Z)$ are in bijection with the bicolored subdivisions of type $(k,n).$  Moreover, if $\sigma$ is such a bicolored subdivision, then the corresponding open tile is
$$Z^{\circ}_\sigma = \{Y\in \Gr_{k,k+2} \ \vert \ 
 (-1)^{\area(i\to j)}\llrr{Y i j} >0 \text{ for all compatible arcs }i\to j \text{ of }\sigma \text{ with }i<j\}.$$
Alternatively, let $\tilde{\sigma}$ be a triangulation of the black polygons of $\sigma$. Then 
\begin{enumerate}
    \item $Z^{\circ}_{\sigma}$ is cut out by the inequalities $(-1)^{\area(i\to j)}\llrr{Y i j} >0$ for all arcs $i\to j$ in $\tilde{\sigma}$ with $i<j$. 
    \item the facets of $Z_\sigma$ lie on the hypersurfaces $\llrr{Y i j} = 0$, where $i\to j$ a facet-defining arc of $\sigma$. 
\end{enumerate}
    \end{theorem}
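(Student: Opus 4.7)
The statement combines \cite[Theorems 4.25 and 4.28]{PSW}, and the shortest route is to cite those results directly. Below I sketch how I would reprove it from scratch by bootstrapping from the hypersimplex case via T-duality.

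By \cref{thm:hypersimplex-tiles-ineq-facets}, positroid tiles of $\Delta_{k+1,n}$ are indexed by bicolored subdivisions $\sigma$ of type $(k,n)$. The T-duality theorem \cite[Theorem 11.6]{PSW} provides a bijection between positroid cells of $\Gr_{k+1,n}^{\geq 0}$ whose moment-map image is a tile of $\Delta_{k+1,n}$ and positroid cells of $\Gr_{k,n}^{\geq 0}$ on which $\tilde Z$ is injective of dimension $2k$. Composing the two bijections yields the desired map $\sigma \mapsto Z_\sigma$, and reduces the overall problem to producing an explicit combinatorial model for $S_\sigma$ and computing twistor coordinates on it.

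For the twistor-inequality description in (1), I would realize $S_\sigma \subset \Gr_{k,n}^{\geq 0}$ explicitly via a Kasteleyn-type construction: fix a triangulation $\tilde\sigma$ of the black polygons of $\sigma$, place a trivalent white vertex in each of the $k$ black triangles and connect it to that triangle's three boundary vertices. The resulting planar bipartite graph $\hat G(\tilde\sigma)$ has boundary measurement map parametrizing $S_\sigma$. For each arc $i\to j$ in $\tilde\sigma$, a Cauchy--Binet expansion expresses $\llrr{Yij}$ as $\sum_{I} P_I(C)\,\lr{Z_I Z_i Z_j}$, and using positivity of $Z$ together with the known sign pattern of nonzero Pl\"ucker coordinates on $S_\sigma$, this sum collapses to one of uniform sign $(-1)^{\area(i\to j)}$. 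The resulting $2k$ strict inequalities cut out a full-dimensional open subset of $\Gr_{k,k+2}$ equal to $Z_\sigma^\circ$: the containment is immediate, and equality follows once injectivity of $\tilde Z$ on $S_\sigma$ is in hand, since both sets are then $2k$-dimensional and connected. Sign control extends from arcs in $\tilde\sigma$ to arbitrary compatible arcs by induction on flips of the triangulation, using a three-term twistor identity in $\Gr_{k,k+2}$.

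Part (2) would then follow by analyzing which twistor inequalities become tight on a codimension-one piece of $\partial Z_\sigma$. Contracting an edge of $\hat G(\tilde\sigma)$ bordering an arc $i\to j$ on the boundary of a black polygon yields a lower-dimensional positroid cell mapping into $\{\llrr{Yij}=0\}$, producing a facet; for arcs strictly interior to a black polygon, vanishing of $\llrr{Yij}$ is already forced by vanishing of the bounding twistors via a Pl\"ucker-type relation, so no new facet appears. The main obstacle throughout this outline is establishing injectivity of $\tilde Z$ on $S_\sigma$: this is the technical heart of \cite[Theorem 4.25]{PSW} and requires explicitly inverting the amplituhedron map by reconstructing the $2k$ Kasteleyn edge weights from the twistor coordinates of $Y = \tilde Z(C)$. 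Once injectivity is secured, the remaining steps are essentially bookkeeping and a dimension count.
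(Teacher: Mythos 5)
The paper offers no proof of this statement: it is presented purely as a summary of \cite[Theorems 4.25 and 4.28]{PSW}, which is exactly the route you identify as the shortest one, so your proposal takes the same approach as the paper. One caution about your supplementary sketch: the T-duality theorem as quoted here (\cref{thm:Tduality}) is phrased in terms of collections of bicolored subdivisions already indexing amplituhedron tiles, so invoking it to establish the tile bijection itself would be circular---in \cite{PSW} the characterization of $\mathcal{A}_{n,k,2}$ tiles is proved independently of T-duality, via the Kasteleyn-type parametrization and explicit inversion of $\tilde{Z}$ that you correctly flag as the technical heart.
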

\begin{example}
    For $\sigma$ in \cref{fig:unpunctured}: if $Y \in Z^{\circ}_{\sigma}$ then
    $(-1)^{2}\llrr{Y 3 7} >0$ ($3 \rightarrow 7$ is a compatible arc).
    There is a facet of $Z_{\sigma}$ lying on $\llrr{Y 6 4}=0$ ($6 \rightarrow 4$ is a facet-defining arc).
\end{example}

\subsection{Correspondence between the hypersimplex and the amplituhedron}
From \cref{thm:hypersimplex-tiles-ineq-facets,thm:amp-tiles-facets-inequalities}, we see that tiles of $\Delta_{k+1, n}$ and $\Ank$ are both in bijection with bicolored subdivisions of type $(k,n)$, and thus in bijection with each other. In fact, the bijection between tiles of the hypersimplex 
$\Delta_{k+1,n}$ and tiles of the amplituhedron
$\mathcal{A}_{n,k,2}$ induces a bijection on tilings as well. 
\begin{theorem}[{\cite[Theorem 11.6]{PSW}}]\label{thm:Tduality}
Let $\mathcal{S}$ be a collection of bicolored subdivisions of type $(k,n)$. Then
$\{\htile{\sdiv} \}_{\sdiv\in \mathcal{S}}$ is a positroid tiling of $\Delta_{k+1,n}$ if and only if $\{\atile{\sdiv} \}_{\sdiv\in \mathcal{S}}$ is an all-$Z$ tiling of $\A_{n, k, 2}$.    
\end{theorem}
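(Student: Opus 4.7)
The plan is to exploit the explicit inequality characterizations of tiles on both sides provided by \cref{thm:hypersimplex-tiles-ineq-facets} and \cref{thm:amp-tiles-facets-inequalities}. Since both families of tiles are indexed by the same combinatorial data (bicolored subdivisions $\sigma$ of type $(k,n)$) with facet structure controlled by facet-defining arcs of $\sigma$, the content of the theorem lies in matching the \emph{global} tiling conditions: disjointness of interiors and covering. I would prove both directions simultaneously by showing each property holds on one side if and only if it holds on the other.

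For disjointness, consider two distinct $\sigma, \sigma' \in \mathcal{S}$. By \cref{thm:hypersimplex-tiles-ineq-facets}, $\htile{\sigma}^{\circ} \cap \htile{\sigma'}^{\circ} = \emptyset$ is equivalent to the existence of an internal arc $i \to j$ compatible with both subdivisions such that $\area_{\sigma}(i\to j) \neq \area_{\sigma'}(i\to j)$; otherwise the common strict inequalities $\area(i\to j) < x_{[i,j-1]} < \area(i\to j)+1$ on compatible arcs would be jointly satisfiable on the $(n-1)$-dimensional slice $x_{[n]} = k+1$. On the amplituhedron side, \cref{thm:amp-tiles-facets-inequalities} tells us that $\atile{\sigma}^{\circ} \cap \atile{\sigma'}^{\circ} = \emptyset$ iff there is a common compatible arc $i\to j$ on which the twistor coordinate $\llrr{Yij}$ must take opposite signs on the two open tiles, i.e.\ iff $\area_{\sigma}(i\to j)$ and $\area_{\sigma'}(i\to j)$ have different parities. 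So one direction of the disjointness correspondence is immediate (parity mismatch forces integer mismatch), while the other direction requires showing that any integer-area discrepancy can be upgraded to a parity discrepancy by choosing the arc cleverly.

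For covering, I would use the common refinement structure. The standard triangulation of $\Delta_{k+1,n}$ into $w$-simplices (discussed in \cref{sec:background}) is a simultaneous refinement of all positroid tilings of the hypersimplex, and its image under the T-duality bijection gives a corresponding refinement inside $\mathcal{A}_{n,k,2}(Z)$. I would set up a pointwise correspondence between generic points on each side, so that a generic $x \in \Delta_{k+1,n}$ lies in $\htile{\sigma}^{\circ}$ iff the corresponding generic $Y \in \mathcal{A}_{n,k,2}(Z)$ lies in $\atile{\sigma}^{\circ}$. Once this is in hand, disjointness plus a density argument reduces covering on one side to covering on the other, uniformly in $Z \in \Mat^{>0}_{n,k+2}$.

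The main obstacle is the sign-parity gap in Step 2. The hypersimplex inequalities record the exact integer $\area(i\to j)$, while the amplituhedron inequalities only record $\area(i\to j) \bmod 2$; a priori, two subdivisions $\sigma,\sigma'$ could share all common compatible arcs with matching parities but differing integer values. I would resolve this by showing that whenever $\sigma \neq \sigma'$, one can find a common compatible arc of parity-mismatch type by localizing to a polygon on which the two subdivisions disagree and refining the arc choice within that polygon, using the fact that bicolored subdivisions are in bijection with separable permutations on $[n-1]$ with $k$ descents and arguing by induction on the number of black polygons.
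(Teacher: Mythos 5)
There is a genuine gap — in fact two. First, your covering step hinges on ``a pointwise correspondence between generic points on each side, so that a generic $x\in\Delta_{k+1,n}$ lies in $\htile{\sdiv}^{\circ}$ iff the corresponding generic $Y\in\Ank$ lies in $\atile{\sdiv}^{\circ}$.'' No such correspondence exists: $\Delta_{k+1,n}$ has dimension $n-1$ while $\Ank$ has dimension $2k$, and there is no natural map between them; producing a usable correspondence is precisely the content of T-duality, so as written this step assumes what is to be proved. The actual bridge (in \cite[Section 11]{PSW}, whose ingredients this paper recalls in \cref{rmk:w-chambers,rmk:wsimp-in-tile-iff-wcham-in-tile}) is combinatorial rather than pointwise: the hypersimplex is triangulated by the $w$-simplices $\simp{w}$, the amplituhedron is cut by all facet hypersurfaces into $w$-chambers $\cham{w}$, a tile $\htile{\sdiv}$ contains $\simp{w}$ iff $\atile{\sdiv}$ contains $\cham{w}$ (when nonempty), and --- crucially --- a given $\cham{w}$ may be empty for a particular $Z$ but is nonempty for \emph{some} $Z$. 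This last point is exactly where the ``all-$Z$'' hypothesis enters: a collection could cover $\mathcal{A}_{n,k,2}(Z)$ for one $Z$ while missing a $w$-simplex of $\Delta_{k+1,n}$, and one detects the failure only by varying $Z$. Your proposal never engages with chamber emptiness or with why the statement is about all-$Z$ tilings, so the covering direction is not established in either direction.

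Second, your disjointness step rests on two unproven ``iff'' characterizations. The easy implications (a common compatible arc with distinct areas, resp.\ distinct parities, forces disjoint open tiles) do follow from \cref{thm:hypersimplex-tiles-ineq-facets,thm:amp-tiles-facets-inequalities}; but the converses --- that disjointness of open tiles can always be witnessed by a single common compatible arc, and that an integer-area mismatch can always be upgraded to a parity mismatch --- are exactly the hard content, and you only assert the former and sketch a vague plan (induction on separable permutations / number of black polygons) for the latter. This is the crux of your argument and it is missing: twistor signs on common compatible arcs only see $\area(i\to j)\bmod 2$, and nothing you write rules out two subdivisions whose tiles are disjoint in $\Delta_{k+1,n}$ but impose identical sign conditions on every arc compatible with both. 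The way this is actually resolved is again through the common refinement: two open tiles meet iff they share a $w$-simplex, i.e.\ iff $C_{\sdiv}\cup C_{\sdiv'}$ admits a circular extension (\cref{thm:when-w-simp-in-tile}, \cref{cor:w-simp-in-tile-cyclic-order-version}), and this criterion transfers verbatim to $w$-chambers by \cref{rmk:wsimp-in-tile-iff-wcham-in-tile}; restricting attention to pairwise inequalities on common compatible arcs discards exactly the global information that makes this work. Note also that the theorem is imported here from \cite[Theorem 11.6]{PSW} and not reproved in this paper, so your argument must stand alone; in its current form it does not, and repairing it would essentially amount to reconstructing the $w$-simplex/$w$-chamber machinery you are trying to bypass.
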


There is a particularly nice class of tilings which we can now describe in terms of 
bicolored subdivisions.

\begin{definition}[Bicolored subdivisions of kermit type]\label{def:kermit}
Let $v\in [n]$, let $I = \{i_1,\dots,i_k\}\in {[n]\setminus \{v\} \choose k}$ and let 
$\sigma_I$ be the bicolored subdivision of the $n$-gon obtained by drawing 
black triangles with vertices $\{v, i_\ell, i_{\ell}+1\}$ for $\ell=1,\dots,k$. 
(We then repeatedly merge black triangles sharing a common edge into a larger
black polygon.)  We say that $\sigma^v_I$ is a \emph{kermit bicolored subdivision based at vertex $v$}. We will often denote $\sigma^1_I$ by $\sigma_I$.
\end{definition}

\begin{figure}[h]
\includegraphics[width=0.8\textwidth]{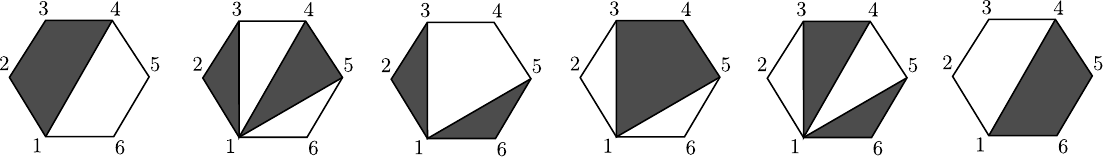}
        \caption{The bicolored subdivisions (based at vertex $v=1$) giving rise to the kermit tilings of 
        $\Delta_{3,6}$ and of $\mathcal{A}_{6,2,2}$.}
        \label{fig:tilings}
\end{figure}

\begin{proposition}\cite[Proposition 11.32]{PSW}\label{prop:tilings}
Choose $0 \leq k \leq n-2$ and let $I$ run over ${[2,n-1] \choose k}$.  Then the collections
$\{\Gamma_{\sigma_I}\}$ and $\{Z_{\sigma_I}\}$ are positroid tilings of 
$\Delta_{k+1,n}$ and $\mathcal{A}_{n,k,2},$ respectively.  In particular,
$$\Delta_{k+1,n} = \bigcup_I \Gamma_{\sigma_I} \text{ and }
\mathcal{A}_{n,k,2}(Z) = \bigcup_I Z_{\sigma_I}.$$
Moreover if we apply a cyclic shift to our definitions
(i.e. rotate the labels of the $n$-gon), we obtain
positroid tilings of $\Delta_{k+1,n}$ and $\mathcal{A}_{n,k,2}$.
\end{proposition}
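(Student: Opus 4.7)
The plan is to prove the hypersimplex statement directly and then transfer to $\Ank$ via T-duality (\cref{thm:Tduality}); the cyclic shift statement follows from the cyclic symmetry of both $\Delta_{k+1,n}$ and $\Ank$, since rotating vertex labels sends a kermit subdivision based at $1$ to a kermit subdivision based at any desired vertex and preserves the positroid tiling property. Note that $\binom{n-2}{k}$ is the size of $\binom{[2,n-1]}{k}$, consistent with the Magic Number Theorem.

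The heart of the proof is a clean inequality description of each tile $\htile{\sigma_I}$ using running sums of coordinates. Fix the base vertex $v=1$ and $I=\{i_1<\dots<i_k\}\subseteq[2,n-1]$. Since $1$ is a vertex of every black polygon of $\sigma_I$, every arc $1\to i$ is compatible with $\sigma_I$, and a direct count gives
\[\area_{\sigma_I}(1\to i) = |\{\ell : i_\ell<i\}| =: f_I(i).\]
Any other compatible internal arc $a\to b$ is a diagonal of a single black polygon of $\sigma_I$ (whose vertices other than $1$ form a block of consecutive integers coming from a run of consecutive $i_\ell$'s together with the vertex immediately after), in which case one computes $\area(a\to b)=b-a-1$ and $f_I(b)-f_I(a)=b-a$. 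Subtracting the inequalities from $1\to a$ and $1\to b$ yields $b-a-1\leq x_{[a,b-1]}\leq b-a+1$, which combined with the facet constraint $x_j\leq 1$ of the hypersimplex recovers the correct bound $b-a-1\leq x_{[a,b-1]}\leq b-a$. By \cref{thm:hypersimplex-tiles-ineq-facets}, therefore,
\[\htile{\sigma_I} = \bigl\{x\in\Delta_{k+1,n} : f_I(i)\leq s_{i-1}\leq f_I(i)+1 \text{ for all }i\in[2,n]\bigr\},\]
where $s_j:=x_1+\cdots+x_j$.

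With this description, disjointness and covering reduce to a single observation. Take a generic $x\in\Delta^{\circ}_{k+1,n}$ with all of $s_1,\dots,s_{n-1}$ non-integer. Since the monotone sequence $s_0=0, s_1, \dots, s_{n-1}\in(k,k+1)$ has increments in $(0,1)$, it crosses each integer level $1,\dots,k$ exactly once, say between positions $i_\ell-1$ and $i_\ell$, with $i_1,\dots,i_k\in[2,n-1]$ (note $i_\ell\neq 1$ because $x_1<1$ generically). Setting $I(x):=\{i_1<\dots<i_k\}$, we have $\lfloor s_{i-1}\rfloor = f_{I(x)}(i)$ for every $i\in[2,n]$, so $x$ lies strictly inside $\htile{\sigma_{I(x)}}$ and outside every other $\htile{\sigma_J}$ from the collection. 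This proves that the open tiles are pairwise disjoint and cover a dense subset of $\Delta_{k+1,n}$; since each tile is closed, their union equals $\Delta_{k+1,n}$. T-duality then upgrades this to an all-$Z$ tiling of $\Ank$.

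The main obstacle I anticipate is correctly handling the area conventions when adjacent black triangles merge (i.e. when $i_{\ell+1}=i_\ell+1$) into a larger black polygon, and verifying that the formulas $\area(1\to i)=f_I(i)$ and $\area(a\to b)=b-a-1$ are independent of the chosen triangulation of the merged polygon, which requires triangulating so as to refine the arc in question. Once these area computations are established, the rest of the argument is a short combinatorial bookkeeping exercise.
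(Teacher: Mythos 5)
The paper itself offers no proof of this proposition --- it is imported verbatim from \cite[Proposition 11.32]{PSW} --- so your self-contained argument is necessarily a different route, and it is essentially correct. Your key step is the clean description $\htile{\sigma_I}=\{x\in\Delta_{k+1,n}\colon f_I(i)\le x_{[1,i-1]}\le f_I(i)+1 \text{ for } i\in[2,n]\}$, deduced from \cref{thm:hypersimplex-tiles-ineq-facets} by checking that the inequalities for all compatible arcs not through the base vertex are implied by those for the arcs $1\to i$ inside the hypersimplex; then disjointness and covering follow because a generic point's integer parts $\lfloor x_{[1,i-1]}\rfloor$ record where the running sums cross the levels $1,\dots,k$, which pins down a unique $I(x)\in\binom{[2,n-1]}{k}$. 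Finishing with \cref{thm:Tduality} for the amplituhedron statement, and with the coordinate-rotation symmetry of $\Delta_{k+1,n}$ plus T-duality again for the cyclic-shift statement (this is cleaner than appealing to a cyclic symmetry of $\mathcal{A}_{n,k,2}(Z)$ itself, which holds only after twisting $Z$), is exactly the right organization. It is also a genuinely more elementary argument than anything available inside this paper, since the in-paper alternative (\cref{cor:onek} together with the $w$-simplex triangulation) is itself derived from the proposition being proved.

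One intermediate claim is false as stated: a compatible internal arc $a\to b$ avoiding the base vertex need not be a diagonal of a black polygon of $\sigma_I$; it can just as well be a diagonal of a white polygon (e.g.\ $3\to 5$ for $I=\{2\}$, $n=6$). This does not damage the argument, but the case must be included: there $\area(a\to b)=0$ and $f_I(b)=f_I(a)$, so subtracting the two arc-through-$1$ inequalities gives $-1\le x_{[a,b-1]}\le 1$, and combining with $x_j\ge 0$ yields the required $0\le x_{[a,b-1]}\le 1$ --- the mirror image of your black-polygon computation, which uses $x_j\le 1$. As for the area convention you flag: for an arc compatible with $\sigma_I$ one computes $\area$ using any triangulation of the black polygons refining that arc (for arcs $1\to i$, the fan at $1$ works uniformly), and with that convention your formulas $\area(1\to i)=f_I(i)$ and $\area(a\to b)=b-a-1$ are correct and independent of the refining triangulation, since the region to the left of the arc is a fixed sub-polygon. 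With the white case added and this convention made explicit, the proof is complete.
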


For any $v\in [n]$, we refer to the tiling $\{\Gamma_{\sigma_I^v}\}$ 
of $\Delta_{k+1,n}$, and the tiling $\{Z_{\sigma_I^v}\}$ 
of $\mathcal{A}_{n,k,2},$ as \emph{kermit tilings (based at $v$)}, see \cref{fig:tilings}.

See \cite[Table 1]{PSW} for many other correspondences between the hypersimplex and the $m=2$ amplituhedron.

\subsection{The triangulation of the hypersimplex into $w$-simplices} 
In this section, we review a unimodular triangulation of the hypersimplex, originally due to Stanley \cite{StanleyTriangulation}, and also studied by Sturmfels \cite{SturmfelsGrobner} and Lam--Postnikov \cite{LamPost}. The maximal simplices of this triangulation are indexed by permutations with a fixed number of descents, which are counted by the Eulerian numbers. We follow the presentation of \cite{LamPost}.

\begin{definition} 
Let $w \in S_n$. A letter $i<n$ is a \emph{left descent}
 (or  a \emph{left descent bottom}) of $w$ if $i$ occurs to the right of $i+1$ in $w$.  In other words,
 $w^{-1}(i) > w^{-1}(i+1)$.
 And we say that $i\in [n]$ is a \emph{cyclic left descent} of $w$ if either
 $i<n$ is a left descent of $w$ or if $i=n$ and $1$ occurs to the left of $n$ in $w$, 
 that is, $w^{-1}(1) < w^{-1}(n)$.
We let $\cdes(w)$ denote the set of cyclic left descents of $w$. 
 We frequently refer to cyclic left descents as simply \emph{cyclic descents}.
\end{definition}

\begin{remark}
Left and right descents and descent sets are discussed extensively 
in \cite[Chapter 1]{bjornerbrenti}.
Left descents are sometimes called \emph{recoils} in the literature.
\end{remark}

\begin{definition}\label{Dk+1n}
Choose $0 \leq k \leq n-2$. We define $D_{k+1, n}$ to be the set of permutations $w \in S_n$ with $k+1$ cyclic descents and $w_n=n$. We let $D_n$ be the set of permutations $w \in S_n$ with $w_n=n$. 
\end{definition}
Note that $|D_{k+1, n}|$ equals the \emph{Eulerian number} $E_{k,n-1}:=\sum_{\ell=0}^{k+1}(-1)^{\ell} {n \choose \ell}(k+1-\ell)^{n-1}$. 

\begin{example}\label{ex:D24}
  The set $D_{2,4}$ contains the following $E_{2,3}=4$ permutations.
  
  \begin{center}
      \begin{tabular}{|c|c|c|c|c|}
        \hline  $w$ & 1324 & 3124 & 2134 & 2314 \\
        \hline  $\cdes(w)$ & $\{2,4\}$ & $\{2,4\}$ & $\{1,4\}$ & $\{1,4\}$ \\ \hline
      \end{tabular}
  \end{center}
\end{example}

It will often be convenient for us to identify elements of $D_n$ with $n$-cycles in $S_n$. In \cref{sec:cyclic}, we will also identify elements of $D_n$ and $n$-cycles with total cyclic orders.

\begin{notation}\label{not:long-cycle}
    For $w=w_1 w_2 \dots w_n$ in $S_n$, we write $(w)=(w_1 \ w_2 \cdots w_n)$ for the cycle which sends $w_1 \mapsto w_2 \mapsto \cdots \mapsto w_n \mapsto w_1$. Note that $w \mapsto (w)$ gives a bijection between $D_n$ and the set of $n$-cycles in $S_n$.
\end{notation}

\begin{definition}[$w$-simplices] \label{definition:wsimplexHSimplex}
 For $w=w_1 w_2 \dots w_n \in D_{k+1, n}$, let $w^{(a)}$ denote the cyclic rotation of $w$ ending at $a$.
	We define 
	\[I_r=I_r(w):=\cdes(w^{(r)}).
	\]
 Note that $I_r$ in fact depends only on $(w)$ rather than $w$ itself. We define the \emph{w-simplex} $\simp{w} \subseteq \Delta_{k+1, n}$ to be the convex hull of the points $e_{I_1}, \dots, e_{I_n}$; this is an $(n-1)$-dimensional simplex. We call $$I_{w_1} \to I_{w_2} \to \dots \to I_{w_n} \to I_{w_1}$$ the \emph{circuit} of $\simp{w}$ and sometimes abuse notation by equating $\simp{w}$ and its circuit.
\end{definition}

\begin{example}\label{ex:w_simplex}
    From \cref{ex:D24}, there are four $w$-simplices in $\Delta_{2,4}$, see \cref{fig:HYSwsimp}. For $w= 1324$, to determine the vertices of $\simp{w}$, we compute the cyclic descents of the rotations of $w$.

\begin{center}
    \begin{tabular}{|c|c|c|c|c|}\hline
     $r$ & 1 & 2 & 3 & 4 \\ \hline
       $w^{(r)}$ & 3241 & 4132 & 2413 & 1324 \\ \hline
      $I_r=\cdes(w^{(r)})$ & $\{1,2\}$  & $\{2,3\}$ & $\{1,3\}$ & \{2,4\} \\ \hline
    \end{tabular}
\end{center}

    Therefore, $\Delta_{(1324)}$ is the convex hull of  $e_{12},e_{23},e_{13},e_{24}$, i.e. it is the red simplex in \cref{fig:HYSwsimp}. 

    \begin{figure}[h]
\includegraphics[width=0.22\textwidth]{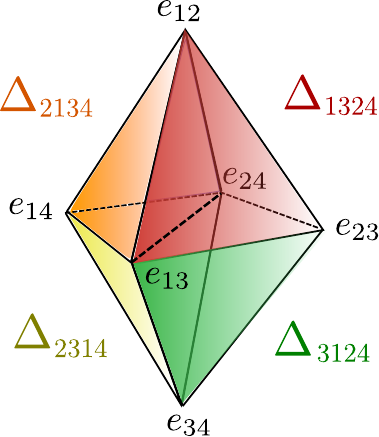}
	\caption{$w$-simplices for $\Delta_{2,4}$. We omit parentheses in subscripts for readability.}
	\label{fig:HYSwsimp}
\end{figure}
\end{example}

\begin{remark}\label{rmk:convention-change}
\cite[Section 10]{PSW} uses left cyclic descent tops rather than left cyclic descent bottoms. It also uses the notation $\Delta_w$ for $w$-simplices. The bijection $w \mapsto \simp{w}$ from $D_{k+1, n}$ to $w$-simplices in \cref{definition:wsimplexHSimplex} is slightly different from the bijection $u \mapsto \Delta_{u}$ of \cite[Definition 10.3]{PSW}. In particular, let $u$ be the permutation obtained from $w$ by subtracting $1$ from each number and rotating so $n$ is at the end. Then $\simp{w}=\Delta_{u}$. The left cyclic descent tops of $u^{(r-1)}$ are the same as the left cyclic descent bottoms of $w^{(r)}$, so the sets $I_r(w)$ defined here are the same as the sets $I_r(u)$ in \cite[Definition 10.3]{PSW}.
\end{remark}

\begin{remark}\label{remark:circuitdefinition}
For a $w$-simplex
$\simp{w}$,
the consecutive elements $I_{w_{i-1}} \to I_{w_{i}}$ of the circuit are related by 
$$I_{w_{i}} = I_{w_{i-1}} \setminus\{w_i -1\} \cup \{w_i\}. $$
(In particular, $r$ is always in $I_r$ and $r-1$ is never in $I_r$.)
From this, one can see that, letting $u$ be as in \cref{rmk:convention-change},  $\simp{w}$ agrees with the simplex denoted $\Delta_{(u)}$ in \cite[Section 2.4]{LamPost}. In particular, the circuit $I_{w_1} \to I_{w_2} \to \dots \to I_{w_n} \to I_{w_1}$ is the circuit used to define $\Delta_{(u)}$. 
\end{remark}

The following triangulation of the hypersimplex first appeared in \cite{StanleyTriangulation},
though the description there was slightly different.
\begin{proposition}[$\Delta_{k+1,n}$ is the union of $w$-simplices \cite{StanleyTriangulation}]\label{prop:StanleyTriangulation}
	The $w$-simplices $\{\simp{w}: w \in D_{k+1, n}\}$ are the maximal simplices of a triangulation of the 
 hypersimplex $\Delta_{k+1,n}$.
Moreover,
projecting $\{\simp{w}: w\in S_n\}$ into $\mathbb{R}^{n-1}$ (see \cref{rem:hypercube}),
we obtain the maximal simplices in a triangulation of the hypercube $\mbox{\mancube}_{n-1}$ 
which refines the subdivision of the hypercube into hypersimplices. 
\end{proposition}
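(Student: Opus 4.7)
My plan is to reduce the hypersimplex statement to the analogous hypercube statement and then prove the latter by a volume count combined with an explicit rule identifying which simplex contains a given generic point. By \cref{rem:hypercube}, the projection $\pi:(x_1,\dots,x_n) \mapsto (x_1,\dots,x_{n-1})$ identifies $\Delta_{k+1,n}$ with the slab $\mathcal{S}_k := \{y \in [0,1]^{n-1} : k \le y_{[n-1]} \le k+1\}$, and the slabs $\mathcal{S}_0, \dots, \mathcal{S}_{n-2}$ partition $\mbox{\mancube}_{n-1}$. For any $w \in D_n$, the cyclic-descent count $|\cdes(w^{(r)})|$ is independent of $r$, so each $I_r$ has size $\cdes(w)$; the projected vertex $\pi(e_{I_r}) = e_{I_r \cap [n-1]}$ therefore has coordinate-sum $\cdes(w)$ or $\cdes(w) - 1$ according to whether $n \in I_r$. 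Hence $\pi(\simp{w}) \subseteq \mathcal{S}_k$ precisely when $\cdes(w) = k+1$, i.e.\ when $w \in D_{k+1,n}$, and the proposition reduces to the claim that $\{\pi(\simp{w}) : w \in D_n\}$ is a triangulation of $\mbox{\mancube}_{n-1}$ refining the slab decomposition.

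For the cube, I would first check that each $\pi(\simp{w})$ is a unimodular $(n-1)$-simplex of Euclidean volume $1/(n-1)!$. Using \cref{remark:circuitdefinition}, the edge vectors $\pi(e_{I_{w_i}}) - \pi(e_{I_{w_{i-1}}})$ for $i = 2, \dots, n$ have entries in $\{-1,0,1\}$, and reordering them by the values $w_i \in [n] \setminus \{w_1\}$ produces an essentially upper-triangular matrix with $\pm 1$ on the diagonal. Since $|D_n| = (n-1)!$, the simplices together have total Euclidean volume $1$, equal to the volume of $\mbox{\mancube}_{n-1}$, so it will suffice to show their open interiors are pairwise disjoint.

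For disjointness I would invoke the classical ``fractional-parts'' construction. Given generic $y \in (0,1)^{n-1}$, set $s_0 = 0$ and $s_i = y_1 + \cdots + y_i$ for $i = 1, \dots, n-1$, and consider the $n$ fractional parts $\{s_0\}, \dots, \{s_{n-1}\}$ on the circle $\R/\Z$. Their counterclockwise cyclic order starting from $\{s_0\} = 0$ determines a unique $n$-cycle $(w) = (w_1 \cdots w_n)$, and I would verify by induction along the circuit, using $I_{w_i} = I_{w_{i-1}} \setminus \{w_i - 1\} \cup \{w_i\}$, that $y$ lies in the interior of $\pi(\simp{w})$ for exactly this $(w)$; moreover $\lfloor s_{n-1} \rfloor = \#\{i \in [n-1] : s_i < s_{i-1}\}$ equals $\cdes(w) - 1$, matching the slab index. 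The main technical obstacle is this inductive verification: one must carefully track how each modular step ``remove $w_i - 1$, add $w_i$'' along the circuit corresponds to the counterclockwise jump between consecutive fractional-part points on $\R/\Z$, with the wraparound cases $w_i = 1$ or $w_i - 1 \equiv 0 \pmod n$ matching a partial sum crossing $0 \bmod 1$. Once this is verified, disjointness is immediate, total volumes match, and the restriction to each slab $\mathcal{S}_k$ then yields the advertised triangulation of $\Delta_{k+1,n}$.
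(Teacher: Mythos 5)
The paper does not prove this proposition: it is quoted from the literature (Stanley, with the descriptions of Sturmfels and Lam--Postnikov reconciled in \cref{rmk:convention-change,remark:circuitdefinition}). Your proposal is essentially a reconstruction of Stanley's original argument: pass to the cube via \cref{rem:hypercube}, check unimodularity so that the $(n-1)!$ simplices indexed by $D_n$ have total volume $1$, and locate a generic point in its simplex via the cyclic order of the fractional parts of the partial sums (Stanley's transfer-map idea). The outline is sound: the size of $I_r$ is indeed independent of $r$ by \cref{remark:circuitdefinition}, so $\pi(\simp{w})$ lands in the correct slab; the consecutive difference vectors $\pi(e_{I_{w_i}})-\pi(e_{I_{w_{i-1}}})=\pi(e_{w_i}-e_{w_i-1})$ do give a block-bidiagonal matrix of determinant $\pm 1$ after sorting by $w_i$; and covering plus disjoint interiors then follows from the volume count once the fractional-parts rule is verified.

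Three points need attention. First, the step you flag as the main obstacle is genuinely where all the content sits, and your conventions must be pinned down for it to come out right: the fractional part $\{s_{i-1}\}=\{y_1+\cdots+y_{i-1}\}$ must be labeled by the letter $i$ (with $\{s_0\}=0$ labeled $n$, consistent with $w_n=n$), and the cycle read in increasing order around $\R/\Z$; with the labeling ``$\{s_i\}\leftrightarrow i$'' the rule assigns, e.g., the centroid of $\pi(\simp{1324})$ (vertices $e_{12},e_{23},e_{13},e_{24}$ from the paper's \cref{ex:w_simplex}) to the wrong permutation. Second, your slab formula is misstated: since $y_i>0$ one always has $s_i>s_{i-1}$, so $\#\{i: s_i<s_{i-1}\}=0$; you mean $\lfloor s_{n-1}\rfloor=\#\{i:\{s_i\}<\{s_{i-1}\}\}$, the number of integer crossings, which does equal $|\cdes(w)|-1$. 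Third, volume plus disjoint interiors only yields a dissection, whereas the statement asserts a triangulation; to close this, observe (using \cref{cor:ineq-and-facet-w-simp}) that every facet of every $\pi(\simp{w})$ lies on a hyperplane $x_{[a,b-1]}=c$ with $c\in\Z$, so the simplices are exactly the closures of the regions of this affine hyperplane arrangement inside the cube and therefore meet pairwise in common faces. None of these is a fatal flaw, but the inductive verification of the fractional-parts rule should be written out, since it is the heart of the proof rather than a routine check.
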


It follows from \cite[Theorem 2.7]{LamPost} that the triangulation
$$\Delta_{k+1, n}= \bigcup_{w \in D_{k+1, n}} \simp{w}$$
is obtained by cutting the hypersimplex with all facet hyperplanes of all positroid tiles. Another way to say this is that the triangulation into $w$-simplices is the simultaneous refinement of all positroid tilings.
From this, one can conclude the following.

\begin{proposition}[\cite{LamPost}] \label{prop:w-simp-triangulate-pos-poly}
 Every positroid tile\footnote{In fact, this statement hold for any full-dimensional positroid polytope.} for $\Delta_{k+1, n}$ has a triangulation into $w$-simplices.
\end{proposition}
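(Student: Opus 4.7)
The plan is to directly leverage the characterization of the $w$-simplex triangulation, stated just above the proposition, as the common refinement obtained by slicing $\Delta_{k+1,n}$ with every facet hyperplane of every positroid tile. Fix a positroid tile $\Gamma_\sigma$; by \cref{thm:hypersimplex-tiles-ineq-facets} its facets lie on hyperplanes of the form $x_i = 0$ or $x_{[i,j-1]} = \area(i\to j)$, and each of these is among the slicing hyperplanes defining the triangulation into $w$-simplices.

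The key step is to show that for every $w \in D_{k+1,n}$ the relative interior $\simp{w}^\circ$ lies either entirely in $\Gamma_\sigma^\circ$ or entirely disjoint from $\Gamma_\sigma$. Because $\simp{w}$ is a maximal cell of the triangulation, $\simp{w}^\circ$ meets no slicing hyperplane, and hence no facet hyperplane of $\Gamma_\sigma$. Since both $\simp{w}$ and $\Gamma_\sigma$ lie in the affine hyperplane $x_{[n]} = k+1$ and $\simp{w}^\circ$ is connected, this forces $\simp{w}^\circ$ to lie strictly on one side of every facet hyperplane of $\Gamma_\sigma$, so the claimed dichotomy follows.

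Granting the dichotomy, let $\mathcal{W}_\sigma := \{\simp{w} : \simp{w}^\circ \subset \Gamma_\sigma^\circ\}$. By \cref{prop:StanleyTriangulation} the relative interiors of the $w$-simplices partition the complement of a codimension-one skeleton inside $\Delta_{k+1,n}$, so $\bigsqcup_{\simp{w} \in \mathcal{W}_\sigma} \simp{w}^\circ$ is a full-measure subset of $\Gamma_\sigma^\circ$; taking closures yields $\bigcup_{\simp{w} \in \mathcal{W}_\sigma} \simp{w} = \Gamma_\sigma$. Because any two distinct $w$-simplices meet only along shared faces of the global triangulation, $\mathcal{W}_\sigma$ constitutes the set of top-dimensional simplices of a triangulation of $\Gamma_\sigma$. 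The only delicate point is matching the explicit facet hyperplanes in \cref{thm:hypersimplex-tiles-ineq-facets} with the hyperplane arrangement used in \cite{LamPost}, but this is a routine translation since facet hyperplanes of positroid polytopes are always of the form $x_{[i,j-1]} = c$ for an integer $c$ (with $x_i = 0$ a special case).
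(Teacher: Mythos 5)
Your proposal is correct and follows essentially the same route as the paper, which deduces the proposition from the fact (citing \cite{LamPost}) that the $w$-simplex triangulation is the common refinement obtained by cutting $\Delta_{k+1,n}$ with all facet hyperplanes of all positroid tiles; you simply make explicit the dichotomy and closure arguments that the paper leaves implicit with ``from this, one can conclude the following.''
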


We will later need to discuss facet-sharing of $w$-simplices. The characterization of facet-sharing was given in \cite[Theorem 2.9]{LamPost}. The hyperplane on the shared facet lies can be deduced readily from \cite{LamPost} (we also prove it in \cref{cor:ineq-and-facet-w-simp}).

\begin{proposition}\label{prop:w-simp-facet-sharing}
    Let $u,w \in D_{k+1, n}$. Then $\simp{w}$ and $\simp{u}$ share a facet if and only if $(w)$ and $(u)$ are related by swapping an adjacent pair of numbers, i.e. if $(w)=(A\ i \ j \ B)$ and $(u)=(A \ j \ i \ B)$, where $j \neq i \pm 1$. The shared facet, which we denote by $\simp{w}^{(ij)}$ or $\simp{u}^{(ji)}$, lies on a hyperplane $x_{[i,j-1]}=c$.
\end{proposition}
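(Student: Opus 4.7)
The facet-sharing characterization --- that $\simp{w}$ and $\simp{u}$ share a facet iff $(u)$ is obtained from $(w)$ by swapping two cyclically adjacent entries $i,j$ with $j\neq i\pm 1$ --- is exactly \cite[Theorem 2.9]{LamPost}, so I would cite that result and focus on identifying the affine hyperplane that carries the shared facet. The rest of the proof is essentially bookkeeping along the circuit defining the $w$-simplices.

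Fix cyclic words $(w)=(A\ i\ j\ B)$ and $(u)=(A\ j\ i\ B)$, and let $a$ denote the entry of $A$ immediately preceding $i$ (respectively $j$) in the two cycles. Since the circuit rule of \cref{remark:circuitdefinition} applied through $A$ is identical for both $w$ and $u$, the set $S:=I_a^w=I_a^u$ is a common vertex. A direct application of the rule then gives
\[I_i^w=S\setminus\{i-1\}\cup\{i\},\quad I_j^u=S\setminus\{j-1\}\cup\{j\},\quad I_j^w=I_i^u=S\setminus\{i-1,j-1\}\cup\{i,j\},\]
where the last equality uses $j\neq i\pm 1$ so that the two remove/add operations commute. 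Continuing through $B$ and back through the rest of $A$, both circuits apply the same remove/add steps, so all remaining vertices agree. Hence $\simp{w}$ and $\simp{u}$ share $n-1$ vertices, and the two non-shared vertices are precisely $I_i^w$ and $I_j^u$.

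To pin down the hyperplane, I would track $f(V):=|V\cap[i,j-1]|$ along the circuit. Passing from one circuit vertex to the next replaces $w_k-1$ by $w_k$, so a short case analysis gives
\[\Delta f=[w_k\in[i,j-1]]-[w_k-1\in[i,j-1]]=[w_k=i]-[w_k=j],\]
which vanishes whenever $w_k\notin\{i,j\}$. Therefore $f$ is constant on every maximal sub-arc of the circuit that avoids $i$ and $j$; in particular $f$ takes the common value $c:=f(S)$ at every shared vertex, so all shared vertices lie on $x_{[i,j-1]}=c$. A final check at the two non-shared vertices yields $f(I_i^w)=c+1$ and $f(I_j^u)=c-1$, so they sit on opposite sides of this hyperplane, confirming that $x_{[i,j-1]}=c$ is indeed the hyperplane containing the shared facet.

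I do not anticipate a serious obstacle: the main step is the one-line computation of $\Delta f$, and everything else is verifying that the circuit rule behaves as described. The only mild subtlety is that $i-1$, $j-1$, and $[i,j-1]$ must be interpreted cyclically when $i$ or $j$ equals $1$, but this is built into the conventions set up in \cref{subsec:HYS}. By cyclic symmetry and the relation $x_{[i,j-1]}+x_{[j,i-1]}=k+1$ valid on $\Delta_{k+1,n}$, the same facet can be described equivalently by either of the two complementary cyclic intervals, so there is no ambiguity in the statement.
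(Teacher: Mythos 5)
Your proposal is correct and matches the paper's own treatment: the paper likewise just cites \cite[Theorem 2.9]{LamPost} for the facet-sharing characterization and obtains the hyperplane from the circuit structure, and your tracking of $f(V)=|V\cap[i,j-1]|$ along the circuit (with jumps only at the steps $i$ and $j$) is exactly the content of \cref{lem:w-simp-interval-intersection} and \cref{cor:ineq-and-facet-w-simp}, which the paper points to for this claim. The only loose spot is your justification that $I_a^w=I_a^u$; the clean one-line argument is that the rotations of $w$ and $u$ ending at $a$ differ by an adjacent transposition of two values that are not cyclically consecutive (the case $\{i,j\}=\{1,n\}$ cannot occur for $u,w\in D_{k+1,n}$, since such a swap changes the number of cyclic descents), and such a swap leaves the cyclic descent set unchanged.
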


Finally, the results from this section have analogues in the $m=2$ amplituhedron.

\begin{remark}\label{rmk:w-chambers}
    Cutting $\Delta_{k+1, n}$ with the facet hyperplanes of all positroid tiles decomposes the hypersimplex into $w$-simplices $\simp{w}$. Analogously, cutting $\Ank$ with the facet hypersurfaces of all positroid tiles decomposes the amplituhedron into \emph{$w$-chambers} ${\cham{w}}$ 
    (cf. \cref{def:ampchamber}), which are also indexed by $w \in D_{k+1, n}$ \cite[Corollary 11.20]{PSW}. Each chamber consists of points whose twistor coordinates have specified signs, which can be read off of $w$. Some $w$-chambers may be empty for a particular choice of $Z$, but all are nonempty for some $Z \in \Mat^{>0}_{n, k+2}$. 
\end{remark}

\begin{remark}\label{rmk:wsimp-in-tile-iff-wcham-in-tile}
    The bijection between tiles of the hypersimplex and the amplituhedron can be further refined using $w$-simplices and $w$-chambers. Each tile $\atile{\sdiv}$ of the amplituhedron is a union of $w$-chambers \cite[Corollary 10.17]{PSW}. If $\cham{w}$ is nonempty, then $\cham{w} \subset \atile{\sdiv}$ if and only if $\simp{w} \subset \htile{\sdiv}$ \cite[Proposition 11.1]{PSW}.
\end{remark}

\section{Combinatorial results on tiles}\label{sec:combtiles}

In this section, we characterize which $w$-simplices are contained in a tile $\htile{\sigma}$.

\begin{theorem}\label{thm:when-w-simp-in-tile}
    Let $\sdiv$ be a bicolored subdivision of type $(k,n)$, and $\htile{\sdiv}$ the corresponding tile of $\Delta_{k+1, n}$. Then
    \[\htile{\sdiv}= \bigcup \simp{w}\]
    where the union is over $w \in D_{k+1, n}$ which satisfy
    \begin{itemize}
        \item for every white polygon of $\sdiv$ with vertices $v_1, \dots, v_r$ in \textbf{clockwise} order, we see $v_1, \dots, v_r$ in order in $(w)$.
        \item for every black polygon of $\sdiv$ with vertices $v_1, \dots, v_r$ in \textbf{counterclockwise} order, we see $v_1, \dots, v_r$  in order in $(w)$.
    \end{itemize}
    \end{theorem}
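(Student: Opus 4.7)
The plan is to combine the inequality description of $\htile{\sigma}$ from \cref{thm:hypersimplex-tiles-ineq-facets} with the recursive structure of the circuit of a $w$-simplex (\cref{remark:circuitdefinition}).

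First, since $\htile{\sigma}$ is a union of $w$-simplices with pairwise disjoint interiors (\cref{prop:w-simp-triangulate-pos-poly}), we have $\simp{w}\subset\htile{\sigma}$ if and only if the barycenter $p_w := \tfrac{1}{n}\sum_{r} e_{I_r}$ of $\simp{w}$ lies in $\htile{\sigma}$. By \cref{thm:hypersimplex-tiles-ineq-facets}, this is equivalent to $(p_w)_{[i,j-1]} \in [\area(i\to j), \area(i\to j)+1]$ for every compatible arc $i\to j$ of $\sigma$.

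The key observation is that, for a fixed arc $i\to j$, the sequence $g(s) := |I_{w_s}\cap [i,j-1]|$ as $s$ runs along the cycle $(w)$ takes exactly two consecutive integer values. Indeed, by \cref{remark:circuitdefinition} we have $I_{w_{s+1}} = I_{w_s}\setminus\{w_{s+1}-1\}\cup\{w_{s+1}\}$, so $g$ changes by $+1$ at the step with $w_{s+1}=i$, by $-1$ at the step with $w_{s+1}=j$, and is otherwise constant. Hence $g$ takes only the values $c$ and $c+1$, with $c = |I_j\cap [i,j-1]|$ the minimum. Averaging, $(p_w)_{[i,j-1]}$ lies strictly between $c$ and $c+1$, so the barycenter condition reduces to the single pointwise equation
\[|I_j\cap[i,j-1]| = \area(i\to j)\]
for every compatible arc $i\to j$ of $\sigma$.

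It remains to show that this family of equations is equivalent to the polygon-order condition. Arcs $i\to j$ that are boundary edges of the $n$-gon contribute only trivial equalities (since $[i,j-1] = [n]\setminus\{j\}$ and both sides equal $k$), so only the \emph{internal} CCW arcs of each polygon $P$ produce non-trivial constraints. For such an arc $v_s\to v_{s+1}$, the region on the left of the chord equals $P$ itself, which forces $\area(v_s\to v_{s+1}) = |V(P)|-2$ if $P$ is black and $0$ if $P$ is white, and $[v_s, v_{s+1}-1] = V(P)\setminus\{v_{s+1}\}$. Using the combinatorial rule that $a\in I_j$ iff $j$ lies strictly between $a$ and $a+1$ in the cycle $(w)$, one translates each equality into a condition on the cyclic order of $V(P)$ in $(w)$, and verifies polygon-by-polygon that these conditions force $V(P)$ to appear in counterclockwise (resp.\ clockwise) order in $(w)$ when $P$ is black (resp.\ white).

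I expect the main obstacle to be this last polygon-by-polygon translation. Although each arc constraint is local to a single polygon $P$, checking that the internal-arc equalities for $P$ jointly encode the full cyclic order of $V(P)$ in $(w)$ requires careful bookkeeping when $P$ has many vertices. A clean strategy is to induct on $|V(P)|$, peeling off an ``ear'' of $P$ to reduce to a smaller polygon; alternatively one may observe that the internal-arc equalities for $P$ correspond to adjacent cyclic transpositions on $V(P)$, and that collectively they determine the cyclic order uniquely.
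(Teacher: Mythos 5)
The first half of your argument is correct and is essentially the paper's own reduction: using \cref{prop:w-simp-triangulate-pos-poly} and \cref{thm:hypersimplex-tiles-ineq-facets}, membership $\simp{w}\subset\htile{\sdiv}$ comes down to the equalities $\area(i\to j)=|I_j\cap[i,j-1]|$ for all compatible arcs $i\to j$ of $\sdiv$; your barycenter/two-consecutive-values observation is a repackaging of \cref{lem:w-simp-interval-intersection} and \cref{cor:ineq-and-facet-w-simp}. (Minor slip: a clockwise boundary arc $i\to i+1$ has $[i,j-1]=\{i\}$, not $[n]\setminus\{j\}$; the conclusion that boundary arcs give trivial equalities is still right.)

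The genuine gap is in the second half, which is where the content of the theorem lies: the equivalence between this family of equalities and the polygon-order condition is not proved, and the sketch you give rests on false simplifications. The arc constraints are \emph{not} local to a single polygon $P$: for an edge or diagonal $i\to j$ of $P$, the quantity $\area(i\to j)$ counts black triangles in \emph{all} polygons to the left of the arc, and the cyclic interval $[i,j-1]$ contains vertices of those other polygons as well. So neither the claim that ``the region on the left of the chord equals $P$ itself'' nor the identity $[v_s,v_{s+1}-1]=V(P)\setminus\{v_{s+1}\}$ is correct in general, and the equalities attached to the edges of $P$ simply cannot be rewritten as conditions on the cyclic order of $V(P)$ alone, so the announced ``polygon-by-polygon translation'' cannot get off the ground as stated. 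In addition, compatible arcs include the diagonals lying inside black and white polygons, so in the direction ``order condition $\Rightarrow$ equalities'' those arcs must be verified too, not only consecutive-vertex edges. What is needed is precisely the mechanism the paper uses to handle this nonlocality: for the forward direction, an induction on the size of the region to the left of a compatible arc $a\to b$, splitting at a vertex $c$ so that $a\to c$ and $c\to b$ are compatible, using $\area(a\to b)=\area(a\to c)+\area(c\to b)$ across the white (or black) triangle $a,c,b$ and invoking \cref{lem:w-simp-interval-intersection} — whose applicability depends on where $c$ sits relative to $a,b$ in $(w)$, i.e.\ on the order hypothesis — to add the interval counts; and for the converse, taking a triple $a,c,b$ that violates the order condition and deriving the contradiction $|I_b\cap[a,b-1]|=|I_b\cap[a,b-1]|-1$. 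Without an argument of this kind that propagates the equalities across triangles while tracking the cyclic position of the intermediate vertex, the proof is incomplete.
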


\begin{example}\label{ex:order}
    For the bicolored subdivision $\sdiv$ in \cref{fig:ex_order}, $\Gamma_\sigma$ is the union of all $w$-simplices $\simp{w}$ such that  $w \in D_{3,6}$ satisfies:
    \begin{itemize}
        \item we see $1,2,3$ and $1,4,6$ in order in $(w)$ (from the white polygons);
        \item we see $1,4,3$ and $4,6,5$ in order in $(w)$ (from the black polygons).
        \end{itemize}
\begin{figure}[h]
\includegraphics[height=1.2in]{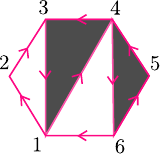}
        \caption{A bicolored subdivision $\sdiv$, together with the orientation of the boundary of the white (resp. black) polygons clockwise (resp. counterclockwise).}
        \label{fig:ex_order}
\end{figure}
        These permutations $w$ are:
        \begin{center}
    \begin{tabular}{|c|c|c|c|c|c|c|c|}\hline
   $5 1 2 4 3 6$ & $3 5 1 2 4 6$ & $5 3 1 2 4 6$ & $1 2 5 4 3 6$ & $3 1 2 5 4 6$ & $1 5 2 
4 3 6$ & $3 1 5 2 4 6$ & $1 5 4 236$  \\ \hline
      $3 1 5 4 2 6$ & $2 3 1 5 4 6$ & $514236$ & $351426$ & $531426$ & $235146$ & $253146$ & $523146$  \\ \hline
    \end{tabular}
\end{center}
For example, $512436$ satisfies the conditions above: $5 \fbox{12} 4 \fbox{3} 6$, $5 \fbox{1} 2 \fbox{4} 3 \fbox{6}$, $5 \fbox{1} 2 \fbox{43} 6$, $\fbox{5} 1 2 \fbox{4} 3 \fbox{6}$.
\end{example}

    Before proving \cref{thm:when-w-simp-in-tile}, we need a straightforward lemma.

\begin{lemma}\label{lem:w-simp-interval-intersection}
	Consider $w \in D_n$ with $(w)= (\cdots q \ a \cdots r \ b \cdots)$ and corresponding $w$-simplex
	\[\simp{w}=\cdots I_q\to I_a \to \cdots \to I_r \to I_b \to \cdots.\]
	If $j$ appears weakly between $a$ and $r$ in $(w)$, then
	\[|I_a \cap [a, b-1]| = |I_j \cap [a, b-1]| .\]
	If $j$ appears weakly between $b$ and $q$ in $(w)$, then 
	\[|I_a \cap [a, b-1]|-1=|I_b \cap [a, b-1]|= |I_j \cap [a, b-1]| .\]
\end{lemma}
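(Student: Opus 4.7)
The plan is to track how the quantity $|I \cap [a,b-1]|$ evolves as we walk around the circuit of $\simp{w}$, using the update rule from \cref{remark:circuitdefinition}: at each step, $I_{w_i} = I_{w_{i-1}} \setminus \{w_i - 1\} \cup \{w_i\}$. Thus the change
\[
|I_{w_i} \cap [a,b-1]| - |I_{w_{i-1}} \cap [a,b-1]|
= \mathbf{1}[w_i \in [a,b-1]] - \mathbf{1}[w_i - 1 \in [a,b-1]],
\]
where indices are read mod $n$, matches the conventions of the cyclic interval.

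The first step is a short case check: this difference is $+1$ exactly when $w_i = a$ (since then $w_i \in [a,b-1]$ but $w_i-1 \notin [a,b-1]$), $-1$ exactly when $w_i = b$ (since $w_i \notin [a,b-1]$ but $w_i - 1 = b-1 \in [a,b-1]$), and $0$ in every other case (either both $w_i$ and $w_i - 1$ lie in $[a,b-1]$, or neither does). So only the two transitions into $I_a$ and into $I_b$ change the count, and each appears exactly once in the cyclic circuit since $w$ is a permutation.

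From this the two claims follow immediately. For the first claim, we start at $I_a$ (right after the $+1$ transition) and walk forward along the cycle; until we hit the transition at $b$, no further $\pm 1$ contributions occur, so $|I_j \cap [a,b-1]| = |I_a \cap [a,b-1]|$ for every $j$ weakly between $a$ and $r$ in $(w)$. For the second claim, the transition from $I_r$ to $I_b$ contributes the $-1$, yielding $|I_b \cap [a,b-1]| = |I_a \cap [a,b-1]| - 1$, and then no further $\pm 1$ transitions occur until we re-enter $a$ (which happens only after $q$); hence $|I_j \cap [a,b-1]| = |I_b \cap [a,b-1]|$ for every $j$ weakly between $b$ and $q$.

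There is no real obstacle here beyond bookkeeping; the only mildly delicate point is handling the cyclic arithmetic (e.g.\ the case $a = 1$ so that $a - 1$ is interpreted as $n$, or $b = 1$ so $b - 1 = n$), which one checks does not affect the indicator computation since $[a,b-1]$ is itself a cyclic interval. Everything else is a direct consequence of the update rule and the fact that each of $a$ and $b$ appears once in the cycle $(w)$.
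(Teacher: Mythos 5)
Your proposal is correct and is essentially the paper's own argument: both walk along the circuit using the update rule $I_{w_i} = I_{w_{i-1}} \setminus \{w_i-1\} \cup \{w_i\}$ from \cref{remark:circuitdefinition} and observe that $|I \cap [a,b-1]|$ is unchanged at every step except the one into $b$ (the paper phrases this as "either both $j-1,j$ or neither lie in $[a,b-1]$" for $j \neq a,b$, which is your indicator computation). Your explicit $+1$ at the step into $a$ is a harmless extra observation the paper does not need, and your handling of the cyclic arithmetic is fine.
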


    \begin{proof}
First, suppose $j$ appears weakly between $a$ and $r$. If $j=a$, the lemma clearly holds. If $j \neq a$, either $j-1, j$ are both in $[a, b-1]$ or neither $j-1, j$ is in $[a, b-1]$. This is because $j \neq b$ by assumption. Now, the number $x$ preceding $j$ in $(w)$ is also weakly between $a$ and $r$. By definition, $I_j = I_x \setminus \{j-1\} \cup \{j\}$, so we have 
       \[|I_x \cap [a, b-1]|= |I_j \cap [a, b-1]|.\]
       This shows that $|I_j \cap [a, b-1]|$ is constant as $j$ ranges over all numbers appearing weakly between $a$ and $r$ in $(w)$.
       
       We now turn to the second case. An identical argument shows that $|I_j \cap [a, b-1]|$ is constant as $j$ ranges over numbers weakly between $b$ and $q$ in $(w)$. Since $I_b= I_r \setminus \{b-1\} \cup \{b\}$, we conclude that 
       \[|I_b \cap [a, b-1]| =|I_r \cap [a, b-1]|-1 =  |I_a\cap [a, b-1]|-1. \]
       This shows the desired equalities.
    \end{proof}

\cref{lem:w-simp-interval-intersection} implies that $\simp{w}$ satisfies a number of inequalities.

\begin{corollary}\label{cor:ineq-and-facet-w-simp}
	Consider a $w$-simplex $\simp{w} \subset \Delta_{k+1, n}$ and let $I_a$ be as in \cref{definition:wsimplexHSimplex}. Then
	\begin{enumerate}
		\item For all $a, b \in [n]$, all points in $\simp{w}$ satisfy 
		\[|I_b \cap [a, b-1]| \leq x_{[a,b-1]} \leq |I_a \cap [a, b-1]|.\]
		\item A minimal inequality description of $\simp{w}$ as a subset of $\R^n$ is given by the equation $x_{[n]}=k+1$ together with the $n$ facet inequalities:
  \begin{equation*}
     |I_b \cap [a, b-1]| \leq x_{[a, b-1]}
  \end{equation*}
  where $a$ immediately precedes $b$ in $(w)$.
	\end{enumerate}
\end{corollary}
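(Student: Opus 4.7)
Both parts of the corollary will follow from \cref{lem:w-simp-interval-intersection} together with the basic fact that a convex combination of two consecutive integers lies in the closed interval between them; the lemma has done the real combinatorial work.

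For part (1), the plan is to write an arbitrary point of $\simp{w}$ as a convex combination $x = \sum_{r=1}^n \lambda_r e_{I_r}$, so that $x_{[a, b-1]} = \sum_r \lambda_r |I_r \cap [a, b-1]|$. Writing $(w) = (\cdots q\ a \cdots r\ b \cdots)$ with $r$ the predecessor of $b$ and $q$ the predecessor of $a$ in $(w)$, \cref{lem:w-simp-interval-intersection} gives that $|I_j \cap [a, b-1]|$ equals $|I_a \cap [a, b-1]|$ when $j$ lies weakly between $a$ and $r$, and equals $|I_b \cap [a, b-1]| = |I_a \cap [a, b-1]| - 1$ when $j$ lies weakly between $b$ and $q$. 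Since these two cyclic arcs together cover every index in $[n]$, the convex combination $x_{[a, b-1]}$ must lie in the closed interval between $|I_b \cap [a, b-1]|$ and $|I_a \cap [a, b-1]|$, which is the claimed bound.

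For part (2), my plan is a counting argument. Since $\simp{w}$ is an $(n-1)$-simplex contained in $\{x_{[n]} = k+1\}$, it has exactly $n$ facets, one opposite each vertex $e_{I_a}$. The proposed list also contains exactly $n$ inequalities, one for each pair $a, b$ with $a$ immediately preceding $b$ in $(w)$. To match the inequality for such a pair with the facet opposite $e_{I_a}$, I will specialize \cref{lem:w-simp-interval-intersection} to the case $r = a$, which is exactly the hypothesis that $a$ immediately precedes $b$. In this case the arc ``weakly between $a$ and $r$'' consists of $\{a\}$ alone, so the lemma forces $|I_j \cap [a, b-1]| = |I_b \cap [a, b-1]|$ for every $j \neq a$, while $|I_a \cap [a, b-1]|$ exceeds this by exactly $1$. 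Hence every vertex of $\simp{w}$ except $e_{I_a}$ lies on the hyperplane $x_{[a, b-1]} = |I_b \cap [a, b-1]|$, while $e_{I_a}$ lies strictly on the $>$ side; this identifies the inequality with the facet opposite $e_{I_a}$. Distinct choices of $a$ produce distinct facets, so together with $x_{[n]} = k+1$ these $n$ inequalities give a minimal description.

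I do not anticipate a significant obstacle: the combinatorial heart of the argument is already contained in \cref{lem:w-simp-interval-intersection}, and both parts reduce to invoking it in the right special case. The only point needing care is verifying, in part (1), that the two cyclic arcs ``weakly between $a$ and $r$'' and ``weakly between $b$ and $q$'' genuinely cover every index of $(w)$, so that every $|I_r \cap [a, b-1]|$ in the convex combination is controlled by the lemma; this is immediate from the cyclic structure of $(w)$.
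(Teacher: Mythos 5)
Your proposal is correct and follows essentially the same route as the paper: both parts are deduced from \cref{lem:w-simp-interval-intersection}, with (1) following because every vertex value $|I_j\cap[a,b-1]|$ is one of the two consecutive integers $|I_b\cap[a,b-1]|$, $|I_a\cap[a,b-1]|$ (so convexity gives the bounds), and (2) following because when $a$ immediately precedes $b$ the lemma places the $n-1$ vertices other than $e_{I_a}$ on the hyperplane $x_{[a,b-1]}=|I_b\cap[a,b-1]|$, yielding the $n$ facets of the simplex. Your extra explicitness about the facet opposite $e_{I_a}$ and the covering of $[n]$ by the two cyclic arcs is fine but does not change the argument.
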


\begin{proof}
\noindent For (1): Say $\simp{w}=\cdots \to I_q\to I_a \to \cdots \to I_r \to I_b \to \cdots$. \cref{lem:w-simp-interval-intersection} shows that all vertices $e_{I_j}$ of $\simp{w}$ lie on one of the hyperplanes
\[x_{[a, b-1]} = |I_a \cap [a, b-1]| \quad \text{or} \quad x_{[a, b-1]} = |I_a \cap [a, b-1]| -1 = |I_b \cap [a, b-1]|.\]
Thus, all points in $\simp{w}$ satisfy
\[|I_b \cap [a, b-1]| \leq x_{[a,b-1]} \leq |I_a \cap [a, b-1]|.\]

\noindent For (2): The $w$-simplex $\simp{w}$ is codimension $1$ in $\R^n$, and its affine span is the hyperplane $x_{[n]}=k+1$.

If $a$ immediately precedes $b$ in $w$, then by \cref{lem:w-simp-interval-intersection}, $n-1$ vertices of $\simp{w}$ lie on the hyperplane
$$x_{[a,b-1]}=|I_b \cap [a,b-1]|.$$ 
These $n-1$ vertices form a facet. This gives rise to $n$ facet hyperplanes of $\simp{w}$, which is the complete list of facet hyperplanes as $\simp{w}$ is a simplex.
\end{proof}

\begin{example}
The $w$-simplex $\Delta_{(1324)}$ from \cref{ex:w_simplex} is the subset of $\R^4$ given by the equation $x_{[4]}=3$ together with the $4$ facet inequalities
  \begin{equation*}
1 \leq x_{[1,2]}, \quad  1 \leq x_{[3,1]}, \quad    1 \leq x_{[2,3]} \quad \text{and} \quad 0 \leq x_4.   
  \end{equation*} 
\end{example}

\begin{proof}[Proof of \cref{thm:when-w-simp-in-tile}]
Because $\htile{\sdiv}$ is triangulated by $w$-simplices (cf. \cref{prop:w-simp-triangulate-pos-poly}), $\simp{w}$ is either contained in $\htile{\sdiv}$ or it does not intersect the interior of $\htile{\sdiv}$. So it suffices to determine which $\simp{w}$ intersect the interior of $\htile{\sdiv}$.

By \cref{thm:hypersimplex-tiles-ineq-facets}, a point in $\R^n$ is in the interior of $\htile{\sdiv}$ if and only if $x_{[n]}=k+1$ and for each arc $a \to b$ compatible with $\sdiv$, the point satisfies
	\[\area(a \to b) < x_{[a, b-1]} < \area(a \to b)+1.\]
	Points in $\simp{w}$ satisfy the equality $x_{[n]}=k+1$ for all $w \in D_{k+1, n}$. By \cref{cor:ineq-and-facet-w-simp}, the above inequalities hold for a point in $\simp{w}$ if and only if 
	\begin{equation}\label{eq:area-equals-intersection}
		\area(a \to b)= |I_b \cap [a, b-1]| \quad \text{ for all arcs } a \to b \text{ compatible with } \sdiv.
	\end{equation}
	
	We now show that \eqref{eq:area-equals-intersection} holds if and only if $w$ satisfies the conditions of the theorem.
	
\noindent $(\implies)$: Suppose that $w$ satisfies the conditions of the theorem. Choose a compatible arc $a \to b$. We proceed by induction on the size of the polygon to the left of the arc $a \to b$. The base case is $b=a+1$, and in this case, 
	\[\area(a \to a+1) =0 = |I_b \cap [a, a]|.\]
	The last equality holds because $I_b$ is obtained from some other subset $I_x$ by removing $b-1=a$ and adding $b$, and so does not contain $a$.
	
	Now, suppose $\area(i \to j)= |I_j \cap [i, j-1]|$ for all arcs which are compatible with $\sdiv$ and have an $(r-1)$-gon to the left. Suppose the arc $a \to b$ has an $r$-gon to its left. Choose some vertex $c$ of this $r$-gon so that $a \to c$ and $c \to b$ are also compatible with $\sdiv$. Suppose the triangle with vertices $a,b,c$ is white; the argument when the triangle is black is very similar. Then 
	\[\area(a \to b)= \area(a \to c) + \area(c \to b)= |I_c \cap [a, c-1]| + |I_b \cap [c, b-1]|\]
where the second equality is by the inductive hypothesis. Note that we see $a,c,b$ clockwise around the boundary of some white polygon in $\sdiv$. So by assumption, we have
\[(w)=(\cdots a \cdots c \cdots b \cdots).\]	
\cref{lem:w-simp-interval-intersection} implies that $|I_b \cap [a, c-1]|= |I_c \cap [a, c-1]|$, so we conclude that 
\[\area(a \to b)= |I_c \cap [a, c-1]| + |I_b \cap [c, b-1]| = |I_b \cap [a, b-1]|\]
as desired.

	\noindent $(\impliedby)$: We show the contrapositive. That is, we show that if $w$ does not satisfy the conditions of the theorem, then \eqref{eq:area-equals-intersection} does not hold.
	
	Suppose the conditions of the theorem fail for some white polygon of $\sdiv$ (the argument for a black polygon is similar). Then there are vertices $a,c, b$ which appear in clockwise order around this white polygon, but we see $(w)=(\cdots a \cdots b \cdots c \cdots)$.
	
	We have $\area(a \to b)= \area(a \to c) + \area(c \to b)$ because the triangle on $a, b,c$ is white. If $\area(i \to j)= |I_j \cap [i, j-1]|$ for all three arcs $a \to b, a \to c$ and $c \to b$, then we would have 
	\[|I_b \cap [a, b-1]|= |I_c \cap [a, c-1]| + |I_b \cap [c, b-1]|.\]
	But by \cref{lem:w-simp-interval-intersection}, $|I_c \cap [a, c-1]|= |I_b \cap [a, c-1]|-1$ and thus 
	\[|I_b \cap [a, b-1]|= |I_c \cap [a, c-1]| + |I_b \cap [c, b-1]|= |I_b \cap [a, b-1]| -1.\]
	This is impossible, so we must have $\area(i \to j)\neq |I_j \cap [i, j-1]|$ for one of $a \to b, a \to c$ or $c \to b$. Since all three arcs are compatible with $\sdiv$, this means \eqref{eq:area-equals-intersection} does not hold.
\end{proof}

In light of \cref{rmk:wsimp-in-tile-iff-wcham-in-tile}, \cref{thm:when-w-simp-in-tile} has the following corollary for the amplituhedron.

 \begin{corollary}\label{cor:when-w-cham-in-tile}
     Let $\sdiv$ be a $(k,n)$-bicolored subdivision and $\atile{\sdiv}$ the corresponding tile of $\Ank$. Then 
     \[\atile{\sdiv}= \bigcup \cham{w}\]
         where the union is over $w \in D_{k+1, n}$ which satisfy
    \begin{itemize}
        \item for every white polygon of $\sdiv$ with vertices $v_1, \dots, v_r$ in \textbf{clockwise} order, we see $v_1, \dots , v_r$  in order in $(w)$.
        \item for every black polygon of $\sdiv$ with vertices $v_1, \dots, v_r$ in \textbf{counterclockwise} order, we see $v_1, \dots , v_r$ in order  in $(w)$.
    \end{itemize}
 \end{corollary}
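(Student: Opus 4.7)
The plan is to deduce this corollary almost directly from Theorem \ref{thm:when-w-simp-in-tile} by translating through the hypersimplex--amplituhedron dictionary provided in \cref{rmk:w-chambers,rmk:wsimp-in-tile-iff-wcham-in-tile}. The essential observation is that the hard analytic and combinatorial work has already been done at the level of the hypersimplex: all that remains is to transport the conclusion across T-duality.

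First I would invoke \cref{rmk:w-chambers}, which asserts that cutting $\Ank$ by the facet hypersurfaces of all positroid tiles produces the $w$-chambers $\{\cham{w}\}_{w\in D_{k+1,n}}$, and that in particular every $\atile{\sdiv}$ is a union of such chambers (since $\atile{\sdiv}$ is itself bounded by facet hypersurfaces of positroid tiles). So for any $Z\in \Mat^{>0}_{n,k+2}$, we can write
\[
\atile{\sdiv} \;=\; \bigcup_{\substack{w\in D_{k+1,n}\\ \cham{w}\subseteq \atile{\sdiv}}} \cham{w}.
\]
Next, by \cref{rmk:wsimp-in-tile-iff-wcham-in-tile}, whenever $\cham{w}$ is nonempty we have $\cham{w}\subseteq \atile{\sdiv}$ if and only if $\simp{w}\subseteq \htile{\sdiv}$. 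Chambers $\cham{w}$ which happen to be empty for the chosen $Z$ contribute nothing to the union, so they can be included or excluded from the indexing set without affecting the set-theoretic equality.

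Therefore, for any $Z$,
\[
\atile{\sdiv} \;=\; \bigcup_{\substack{w\in D_{k+1,n}\\ \simp{w}\subseteq \htile{\sdiv}}} \cham{w},
\]
and applying \cref{thm:when-w-simp-in-tile} to rewrite the condition $\simp{w}\subseteq \htile{\sdiv}$ in terms of the clockwise/counterclockwise orderings of the vertices of the white/black polygons of $\sdiv$ yields exactly the indexing set described in the statement of the corollary.

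I do not anticipate any genuine obstacle: every ingredient is already in place (the $w$-chamber decomposition of $\Ank$, the tile/chamber containment criterion, and the hypersimplex characterization \cref{thm:when-w-simp-in-tile}). The only minor point to be careful about is that the indexing set in the corollary is stated uniformly in $w\in D_{k+1,n}$ without reference to which chambers are nonempty for the fixed $Z$; this is fine because empty chambers contribute trivially, and the combinatorial condition listed depends only on $\sdiv$ and $w$, not on $Z$.
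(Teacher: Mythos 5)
Your proposal is correct and follows essentially the same route as the paper, which deduces the corollary directly from \cref{thm:when-w-simp-in-tile} via \cref{rmk:w-chambers} and \cref{rmk:wsimp-in-tile-iff-wcham-in-tile} (i.e., the facts from \cite{PSW} that each $\atile{\sdiv}$ is a union of $w$-chambers and that a nonempty $\cham{w}$ lies in $\atile{\sdiv}$ exactly when $\simp{w}\subseteq\htile{\sdiv}$). Your remark about empty chambers contributing trivially is the right observation to justify indexing uniformly over $w\in D_{k+1,n}$.
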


\section{Cyclic orders and their circular extensions}\label{sec:cyclic}

We now interpret some of the previous results in terms of \emph{(partial) cyclic orders}
and \emph{circular extensions}.   Cyclic orders and circular extensions are circular analogues of partial orders
and linear extensions.

\begin{definition}\label{def:cyclic-order}
    A \emph{(partial) cyclic order} on a finite set $X$ is a ternary relation $C \subset X^3$ such that for all $a, b, c, d \in X$:
    \begin{align}
        (a,b,c) \in C &\implies (c, a, b) \in C \tag{cyclicity}\\
        (a,b,c) \in C &\implies (c, b, a) \notin C \tag{asymmetry}\\
        (a,b,c) \in C \text{ and }(a, c, d) \in C &\implies (a,b,d) \in C \tag{transitivity}
    \end{align}
    A cyclic order $C$ is \emph{total} if for all $a, b, c\in X$, either $(a,b,c)\in C$ or $(a, c, b) \in C$.
  \end{definition}

Informally, a total cyclic order $C$ on $[n]$ is a way of placing $1, \dots, n$ on a circle, just as a total order is a way of placing $1, \dots, n$ on a line.

\begin{definition}
Let $w=w_1\dots w_n \in S_n$. The $w$-order $C_w$ is the total cyclic order obtained by placing $w_1, w_2, \dots, w_n$ on the circle clockwise. We identify this total cyclic order with the $n$-cycle $(w)$ and so may write $(w)$ for $C_w$ or write $C_w=(w_1\ w_2 \dots w_n)$.
\end{definition}  

Note that each total cyclic order on $[n]$ is of the form $C_w$ for a unique permutation $w \in D_n$ (cf. \cref{Dk+1n} for the definition of $D_n$). We move interchangeably between $w \in D_n$, the $n$-cycle $(w)$ and the total cyclic order $C_w$.

\begin{definition}\label{def:circular-extension}
    A total cyclic order $C$ is a \emph{circular extension} of a cyclic order $C'$ if $C' \subset C$. We let $\Ext(C)$ denote the set of all circular extensions of a cyclic order $C$. In an abuse of notation, if $C$ is a partial cyclic order on $[n]$, we sometimes write $(w) \in \Ext(C)$ if $C_w \in \Ext(C)$.
\end{definition}

 Not all cyclic orders have a circular extension \cite{Meg_cyclicOrders}, that is, 
 $\Ext(C)$ could be empty.  Moreover, the problem of determining
whether a cyclic order has a circular extension is NP-complete
\cite{GalilMegiddo}.

\begin{definition}
Let $x_1,\dots,x_m$ be a sequence of $m$ distinct elements of $[n]$ (for $3 \leq m \leq n$).
We let $C=C_{(x_1, x_2 \dots, x_m)}$ denote the partial cyclic order on $[n]$ in which 
for each triple $1\leq i < j < \ell \leq m$ 
        we have $(x_i,x_j, x_{\ell}) \in C$ (which implies by cyclicity that
        also $(x_j,x_{\ell}, x_i)$ and $(x_{\ell}, x_i, x_j)$ lie in $C$).  
We call this partial cyclic order a \emph{chain}.
\end{definition}

We now generalize the notion of bicolored subdivision to \emph{tricolored} (or ``partially bicolored'') subdivision, which will be useful in \cref{sec:combPT}.  We will then
 associate a partial cyclic order to every tricolored subdivision.
\begin{figure}[h]
\includegraphics[height=1.4in]{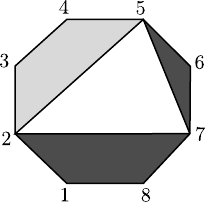}
        \caption{A tricolored subdivision $\tdiv$ of type $(3,2,8)$.  It gives rise to the cyclic order
        $C_{\tdiv}$ which is the union of the chains
        $C_{(2,5,7)}$, $C_{(5,7,6)}$, and $C_{(1,8,7,2)}$.}
        \label{fig:tricolored}
\end{figure}

\begin{definition}[Tricolored subdivisions]\label{def:tricolored}  
Let  $\mathbf{P}_n$ be a convex $n$-gon with vertices labeled from
$1$ to $n$ in clockwise order.
       A \emph{tricolored subdivision $\tdiv$} is a partition of 
       $\mathbf{P}_n$ into black, white, and grey polygons such that two polygons sharing an edge have different colors.  
       We say that $\tdiv$ has 
       \emph{type $(k,\ell, n)$} if any triangulation of the black (respectively, grey) polygons consists of exactly $k$ black (respectively, $\ell$ grey) triangles.
       See \cref{fig:tricolored}.
\end{definition}

\begin{definition}[$\tdiv$-order]\label{def:cyclic-from-perm-subdiv}
Let $\tdiv$ be a tricolored subdivision of $\mathbf{P}_n$ with $q$ polygons $P_1, \dots, P_q$ which are black or white (we ignore the grey polygons).
If $P_a$ is white (respectively, black), we let $v_1,\dots,v_r$ denote its list of vertices read in clockwise (respectively, counterclockwise) order.  We then associate the chain $C_a = C_{(v_1,\dots,v_r)}$ to $P_a$.  Finally we define the \emph{$\tdiv$-order}
to be the partial cyclic order which is the union of the partial cyclic orders associated
to the black and white polygons: 
    \[C_{\tdiv}:=C_1 \cup \dots \cup C_q.\]
\end{definition}
We leave it as an exercise to verify that $C_{\tdiv}$ is a partial cyclic order. See \cref{fig:tricolored} for an example.

To rephrase \cref{thm:when-w-simp-in-tile} and \cref{cor:when-w-cham-in-tile} in terms of cyclic orders, we need one straightforward lemma. Recall the definition of 
$D_{k+1,n}$ and $D_n$ from \cref{Dk+1n}.

\begin{lemma}\label{lem:circular-ext-sdiv-fixed-number-descents}
    Let $\sdiv$ be a bicolored subdivision of type $(k,n)$. Suppose $w \in D_n$ and $C_w$ is a circular extension of $C_\sdiv$. Then $w \in D_{k+1, n}$. 
\end{lemma}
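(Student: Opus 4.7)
The plan is to reuse the combinatorial identity produced inside the $(\implies)$ direction of the proof of \cref{thm:when-w-simp-in-tile}, applied to one carefully chosen arc. The first observation is that the inductive argument there never invokes the hypothesis $w \in D_{k+1,n}$: the base case uses only $b-1 \notin I_b$, while the inductive step relies purely on area-splitting for white/black triangles and on \cref{lem:w-simp-interval-intersection}, all of which hold for arbitrary $w \in D_n$. So I may conclude that for any $w \in D_n$ with $C_w \in \Ext(C_\sigma)$, and every arc $a \to b$ compatible with $\sigma$, one has
\begin{equation*}
|I_b(w) \cap [a, b-1]| \;=\; \area(a \to b).
\end{equation*}

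Next, I would apply this identity with $(a,b) = (1,n)$. Since $\{1,n\}$ is a boundary edge of $\mathbf{P}_n$, the arc $1\to n$ bounds the unique polygon of $\sigma$ incident to that edge and hence is compatible with $\sigma$. The entire interior of $\mathbf{P}_n$ lies to the left of $1 \to n$, so all $k$ black triangles of (a triangulation of) $\sigma$ lie to its left; thus $\area(1 \to n) = k$, giving $|I_n(w) \cap [1, n-1]| = k$.

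Finally, I would use $w_n = n$. This forces $w^{(n)} = w$ and therefore $I_n(w) = \cdes(w)$. Moreover $n$ is always a cyclic descent of such a $w$, since $w^{-1}(1) < n = w^{-1}(n)$ places $1$ to the left of $n$; so $n \in I_n(w)$, and because $n \notin [1,n-1]$ we get
\begin{equation*}
|\cdes(w)| = |I_n(w) \cap [1,n-1]| + 1 = k + 1,
\end{equation*}
proving $w \in D_{k+1,n}$. I do not foresee a real obstacle: the only subtlety is verifying that the $(\implies)$ direction of \cref{thm:when-w-simp-in-tile} truly does not need $w \in D_{k+1,n}$, which a direct inspection of its proof confirms.
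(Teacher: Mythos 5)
Your proof is correct, but it follows a genuinely different route from the paper. The paper proves \cref{lem:circular-ext-sdiv-fixed-number-descents} by a standalone induction on $n$: it contracts the boundary triangle on $n-1, n, 1$ inside the polygon containing those two boundary edges, passes to the induced subdivision $\sdiv'$ of $\mathbf{P}_{n-1}$ and the permutation $w'$ obtained by deleting $n$, and tracks directly how the number of cyclic descents changes depending on whether that polygon is black or white. You instead extract the identity $\area(a\to b)=|I_b\cap[a,b-1]|$ from the $(\implies)$ direction of the proof of \cref{thm:when-w-simp-in-tile} and specialize it to the boundary arc $1\to n$, whose left side is the whole $n$-gon, then finish with $I_n(w)=\cdes(w)$ and $n\in\cdes(w)$. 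Your key observation is accurate: that inductive argument uses only the circuit recursion $I_{w_i}=I_{w_{i-1}}\setminus\{w_i-1\}\cup\{w_i\}$ (valid for any $w\in S_n$, cf. \cref{remark:circuitdefinition}), area additivity, \cref{lem:w-simp-interval-intersection} (stated for $w\in D_n$), and the cyclic-order hypothesis, never the cardinality of $\cdes(w)$; and there is no circularity, since neither \cref{thm:when-w-simp-in-tile} nor \cref{lem:w-simp-interval-intersection} depends on the lemma being proved. The trade-off: your argument is shorter and makes the statement feel like an immediate corollary of the area/intersection identity, but it rests on citing the \emph{argument} inside the proof of \cref{thm:when-w-simp-in-tile} rather than its statement (which is phrased only for $w\in D_{k+1,n}$), so in a write-up you would need to either restate that identity as a lemma for all $w\in D_n$ or repeat the inspection you describe; the paper's self-contained induction avoids this dependency at the cost of a separate (though short) argument.
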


\begin{proof}
We proceed by induction on $n$.
        The base case is $n=3$. For this $n$, $k$ is either 0 or 1. In either case, $C_\sdiv$ is a total cyclic order, and so $C_w =C_\sdiv$. One can check that $w \in D_{k+1, 3}$.

    Now, suppose $n>3$. Without loss of generality, we may assume a polygon $P$ of $\sdiv$ contains the edges of $\mathbf{P}_n$ between $n-1$ and $n$ and between $n$ and 1. 
    Let $\sdiv'$ be the subdivision of $\mathbf{P}_{n-1}$ obtained by removing the triangle on vertices $n-1, n, 1$ from $\sdiv$, and let $w' \in S_{n-1}$ be the permutation obtained from $w^{(n-1)}$ by removing $n$ (recall that $w^{(n-1)}$ denotes the rotation of $w$ ending at $n-1$). Notice that $C_{w'}$ is a circular extension of $C_{\sdiv'}$.

    If the polygon $P$ is black, then by the inductive hypothesis, $w'$ has $k$ cyclic left descents. Because $P$ is black and $C_w$ is a circular extension of $C_\sdiv$, we must have $C_w=(1 \cdots n \cdots n-1 \cdots)$. So to obtain $w^{(n-1)}$ from $w'$, we put in $n$ somewhere to the right of $1$ and to the left of $n-1$. This means $n$ is a cyclic descent of $w^{(n-1)}$. So $w^{(n-1)}$ has $k+1$ cyclic descents, and so does $w$.

    The argument is very similar if $P$ is white. Then $w'$ has $k+1$ cyclic descents, and to obtain $w^{(n-1)}$ from $w'$, we put $n$ somewhere (cyclically) to the right of $n-1$ and to the left of $1$. This means $w^{(n-1)}$ and $w$ both have $k+1$ cyclic left descents.
\end{proof}

\cref{thm:when-w-simp-in-tile}, \cref{cor:when-w-cham-in-tile} and \cref{lem:circular-ext-sdiv-fixed-number-descents} imply the following. Recall from \cref{def:circular-extension} that we identify total cyclic orders $C_w$ with $n$-cycles $(w)$. 

\begin{corollary}\label{cor:w-simp-in-tile-cyclic-order-version}
Let $\sdiv$ be a bicolored subdivision of type $(k,n)$. Then
\[\htile{\sdiv}= \bigcup_{(w) \in \Ext(C_\sdiv)} \simp{w} \qquad \text{and} \qquad \atile{\sdiv}= \bigcup_{(w) \in \Ext(C_\sdiv)} \cham{w}.\]
That is, $\htile{\sdiv}$ (resp. $\atile{\sdiv}$) is the union of $w$-simplices $\simp{w}$ (resp. $w$-chambers $\cham{w}$) where the $w$-order $C_w$ is a circular extension of the $\sdiv$-order $C_{\sdiv}$.
\end{corollary}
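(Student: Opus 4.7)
The plan is to show this is a direct repackaging of \cref{thm:when-w-simp-in-tile}, \cref{cor:when-w-cham-in-tile}, and \cref{lem:circular-ext-sdiv-fixed-number-descents} via the definition of $C_\sdiv$.

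First I would translate the conditions in \cref{thm:when-w-simp-in-tile} into the language of partial cyclic orders. The condition ``we see $v_1,\dots,v_r$ in order in $(w)$'' means precisely that for every triple $1\le i<j<\ell\le r$, the triple $(v_i,v_j,v_\ell)$ lies in $C_w$; equivalently, the chain $C_{(v_1,\dots,v_r)}$ is contained in $C_w$. For a white polygon $P_a$ with vertices $v_1,\dots,v_r$ in clockwise order, the condition of \cref{thm:when-w-simp-in-tile} thus reads $C_a\subseteq C_w$, with $C_a=C_{(v_1,\dots,v_r)}$ as in \cref{def:cyclic-from-perm-subdiv}; similarly for black polygons with counterclockwise-ordered vertices. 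Taking the union over all polygons and recalling that $C_\sdiv=C_1\cup\cdots\cup C_q$, the full collection of conditions on $w$ in \cref{thm:when-w-simp-in-tile} is equivalent to $C_\sdiv\subseteq C_w$, i.e.\ to $C_w\in\Ext(C_\sdiv)$.

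Next I would address the indexing set. \cref{thm:when-w-simp-in-tile} ranges $w$ over $D_{k+1,n}$, whereas the statement of the corollary ranges over all $(w)\in\Ext(C_\sdiv)$, which a priori only requires $w\in D_n$. Here \cref{lem:circular-ext-sdiv-fixed-number-descents} is exactly what is needed: any $w\in D_n$ with $C_w\in\Ext(C_\sdiv)$ automatically has $k+1$ cyclic descents, hence lies in $D_{k+1,n}$. Conversely, if $w\in D_{k+1,n}$ satisfies the cyclic-order condition, then $w\in D_n$ and $C_w\in\Ext(C_\sdiv)$ by the equivalence above. Hence the two indexing sets coincide, and
\[\htile{\sdiv}=\bigcup_{w\in D_{k+1,n},\ C_w\supseteq C_\sdiv}\simp{w}=\bigcup_{(w)\in\Ext(C_\sdiv)}\simp{w}.\]

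Finally, the amplituhedron identity follows by the same translation applied to \cref{cor:when-w-cham-in-tile}: its conditions on $w$ are literally the same as in \cref{thm:when-w-simp-in-tile}, so the above reasoning yields $\atile{\sdiv}=\bigcup_{(w)\in\Ext(C_\sdiv)}\cham{w}$. There is no real obstacle here; the only point requiring care is making sure that enlarging the index set from $D_{k+1,n}$ to all of $D_n$ (with the circular-extension constraint) does not change the union, which is precisely the content of \cref{lem:circular-ext-sdiv-fixed-number-descents}.
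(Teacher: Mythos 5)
Your proposal is correct and follows essentially the same route as the paper: it invokes \cref{thm:when-w-simp-in-tile} and \cref{cor:when-w-cham-in-tile}, identifies the ``appearing in order'' conditions with containment of the chains making up $C_\sdiv$ in $C_w$, and then uses \cref{lem:circular-ext-sdiv-fixed-number-descents} to replace the index set $D_{k+1,n}$ by all circular extensions of $C_\sdiv$ in $D_n$. No gaps; this matches the paper's argument.
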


\begin{proof}
    \cref{thm:when-w-simp-in-tile} and \cref{cor:when-w-cham-in-tile} imply that 
    \[\htile{\sdiv}= \bigcup\simp{w} \qquad \text{and} \qquad \atile{\sdiv}= \bigcup \cham{w}\]
    where the union is over 
    $\{w \in D_{k+1, n} \colon C_w \text{ a circular extension of }C_\sdiv\}.$
   \cref{lem:circular-ext-sdiv-fixed-number-descents} implies that this set is equal to 
   $\{v \in D_n \colon C_w \text{ a circular extension of }C_\sdiv\},$ which we identify with the set of ($n$-cycles which are) circular extensions of $C_\sdiv$.
\end{proof}

Because the $w$-simplices are unimodular, \cref{cor:w-simp-in-tile-cyclic-order-version} gives rise to an expression for the normalized volume of each hypersimplex tile.

\begin{corollary}\label{cor:vol-of-tile}
Let $\sdiv$ be a bicolored subdivision.
    The normalized volume of $\htile{\sdiv}$ is the number of circular extensions of $C_\sdiv$. That is, $\Vol(\htile{\sdiv})= |\Ext(C_\sdiv)|$.
\end{corollary}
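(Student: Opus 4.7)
The plan is to combine the decomposition from \cref{cor:w-simp-in-tile-cyclic-order-version} with the fact that the $w$-simplices in Stanley's triangulation are unimodular, so that counting simplices inside $\htile{\sdiv}$ directly computes its normalized volume.

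First I would recall from \cref{cor:w-simp-in-tile-cyclic-order-version} that
\[
\htile{\sdiv} \;=\; \bigcup_{(w)\in \Ext(C_\sdiv)} \simp{w}.
\]
By \cref{prop:StanleyTriangulation} (and \cref{prop:w-simp-triangulate-pos-poly}), the collection $\{\simp{w} : w \in D_{k+1,n}\}$ is a triangulation of $\Delta_{k+1,n}$; in particular, distinct $w$-simplices appearing in the union above have disjoint interiors, so the displayed union is itself a triangulation of $\htile{\sdiv}$. Thus
\[
\Vol(\htile{\sdiv}) \;=\; \sum_{(w)\in \Ext(C_\sdiv)} \Vol(\simp{w}),
\]
where volume is taken in the normalized sense on the affine hyperplane $x_{[n]}=k+1$.

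Next I would verify that each $w$-simplex has normalized volume $1$. This follows from the description of the vertices in \cref{remark:circuitdefinition}: consecutive vertices $e_{I_{w_i}}$ and $e_{I_{w_{i+1}}}$ differ by $e_{w_{i+1}} - e_{w_{i+1}-1}$, so the edge vectors of $\simp{w}$ emanating from any fixed vertex form (up to signs) a subset of the standard root system $\{e_j - e_i\}$ which is unimodular with respect to the lattice $\{x \in \Z^n : x_{[n]} = k+1\}$. Equivalently, this is the unimodularity of Stanley's triangulation, asserted in \cite{StanleyTriangulation} and used implicitly in \cite{LamPost}; one can alternatively invoke the fact that the projection of \cref{prop:StanleyTriangulation} sends $\simp{w}$ to a unimodular simplex in the unit cube $\mbox{\mancube}_{n-1}$.

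Combining these two observations gives
\[
\Vol(\htile{\sdiv}) \;=\; \sum_{(w)\in \Ext(C_\sdiv)} 1 \;=\; |\Ext(C_\sdiv)|,
\]
as desired. The only non-routine step is the unimodularity claim, but since it is a standard property of the Stanley triangulation already cited in the paper, the argument is essentially immediate once \cref{cor:w-simp-in-tile-cyclic-order-version} is in hand; there is no further obstacle.
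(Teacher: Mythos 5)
Your argument is exactly the paper's: the paper deduces this corollary immediately from \cref{cor:w-simp-in-tile-cyclic-order-version} together with the (cited) unimodularity of the $w$-simplices in Stanley's triangulation, just as you do. One small imprecision: the edge vectors of $\simp{w}$ from a fixed vertex are \emph{partial sums} of the root vectors $e_{w_{i}}-e_{w_{i}-1}$ rather than roots themselves (unimodularity follows since that passage is by a triangular change of basis, or simply by citing \cite{StanleyTriangulation} as you also do), so the proof stands.
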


We can also interpret \cref{prop:tilings} in terms of cyclic orders, which will be useful in \cref{sec:combPT}.
\begin{corollary}\label{cor:onek}
Let $w\in D_{k+1,n}$.
Then $C_w$ is a cyclic extension of exactly one of the  ``kermit'' cyclic orders
$C_{\sigma_I}$, where $I$ runs over ${[2,n-1] \choose k}$.  The same statement holds if we replace $\sigma_I$ by $\sigma_I^v$.
\end{corollary}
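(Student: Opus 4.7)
The plan is to deduce the corollary directly from the tiling/triangulation structure established earlier, together with the cyclic-order reformulation in \cref{cor:w-simp-in-tile-cyclic-order-version}. The key observation is that two decompositions of $\Delta_{k+1,n}$ are simultaneously available: the positroid tiling $\{\Gamma_{\sigma_I}\}_{I \in \binom{[2,n-1]}{k}}$ from \cref{prop:tilings}, and the refining triangulation $\{\Delta_{(w)}\}_{w \in D_{k+1,n}}$ from \cref{prop:StanleyTriangulation}. By \cref{prop:w-simp-triangulate-pos-poly}, every positroid tile is a union of $w$-simplices, so the $w$-simplices refine the kermit tiling.

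First I would observe that since the kermit tiles $\Gamma_{\sigma_I}$ have pairwise disjoint interiors and cover $\Delta_{k+1,n}$, and since each $w$-simplex $\Delta_{(w)}$ is full-dimensional with nonempty interior, there is a unique index $I \in \binom{[2,n-1]}{k}$ such that $\Delta_{(w)} \subset \Gamma_{\sigma_I}$. (Existence: the interior of $\Delta_{(w)}$ meets some open tile, and then by refinement $\Delta_{(w)}$ lies in that tile. Uniqueness: if $\Delta_{(w)}$ were contained in two tiles, their interiors would overlap, contradicting the tiling property.)

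Then I would apply \cref{cor:w-simp-in-tile-cyclic-order-version}, which characterizes containment of $\Delta_{(w)}$ in $\Gamma_{\sigma_I}$ as the condition $C_w \in \Ext(C_{\sigma_I})$. Combining this equivalence with the existence-and-uniqueness statement above immediately gives that $C_w$ is a circular extension of exactly one kermit order $C_{\sigma_I}$.

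For the second assertion, I would invoke the cyclic-shift part of \cref{prop:tilings}, which states that for any $v \in [n]$ the collection $\{\Gamma_{\sigma_I^v}\}$ (obtained by rotating labels) is again a positroid tiling of $\Delta_{k+1,n}$. The argument above then repeats verbatim with $\sigma_I^v$ in place of $\sigma_I$. There is no substantive obstacle here; the whole proof is a short bookkeeping step assembling \cref{prop:tilings}, \cref{prop:StanleyTriangulation}, \cref{prop:w-simp-triangulate-pos-poly}, and \cref{cor:w-simp-in-tile-cyclic-order-version}. The only point one has to be a little careful about is ruling out a $w$-simplex sitting on the boundary between two tiles, which is prevented precisely by full-dimensionality of $\Delta_{(w)}$ and the disjoint-interior axiom of tilings.
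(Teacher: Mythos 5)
Your proposal is correct and follows essentially the same route as the paper: the paper's proof likewise juxtaposes the $w$-simplex triangulation of \cref{prop:StanleyTriangulation} with the kermit tiling of \cref{prop:tilings} (both with pairwise disjoint interiors) and then concludes via \cref{cor:w-simp-in-tile-cyclic-order-version}. Your version merely spells out the refinement and disjoint-interior bookkeeping that the paper leaves implicit.
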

\begin{proof}
We know from \cref{prop:StanleyTriangulation} and \cref{prop:tilings} that 
$$\Delta_{k+1,n} = \bigcup_{w\in D_{k+1,n}} \simp{w} = \bigcup_{I \in {[2,n-1] \choose k}} \htile{\sigma_I}$$
where in both unions, the pieces have pairwise disjoint interiors.
Now the result follows from \cref{cor:w-simp-in-tile-cyclic-order-version}.
\end{proof}

Now if we use \cref{cor:onek} and let $k$ range over the interval $0\leq k \leq n-2$, we obtain the following.
\begin{corollary}\label{cor:allk}
Let $w \in D_n$, and choose $v\in [n]$.
Then $C_w$ is a circular extension of exactly one of the  ``kermit'' cyclic orders 
$C_{\sigma_I^v}$, where $I$ runs over all subsets of $[n]\setminus \{v\}$.
\end{corollary}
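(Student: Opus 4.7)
The plan is to deduce this statement directly from \cref{cor:onek} by letting $k$ range over all allowable values, while using \cref{lem:circular-ext-sdiv-fixed-number-descents} to rule out ``overlap'' between different values of $k$.

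First, I would note that every $w \in D_n$ belongs to $D_{k+1,n}$ for a unique $k$ with $0 \leq k \leq n-2$, namely $k+1 = |\cdes(w)|$. Next, I would apply the second part of \cref{cor:onek} (the ``$\sigma_I^v$'' variant) to this specific $k$: this gives that $C_w$ is a circular extension of $C_{\sigma_I^v}$ for exactly one $I \in \binom{[n]\setminus\{v\}}{k}$. This handles existence, and handles uniqueness among subsets $I$ of that specific size.

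The only remaining point is uniqueness across different sizes of $I$. Here I would invoke \cref{lem:circular-ext-sdiv-fixed-number-descents}: since $\sigma_J^v$ is a bicolored subdivision of type $(|J|, n)$, any $w \in D_n$ for which $C_w$ is a circular extension of $C_{\sigma_J^v}$ must satisfy $w \in D_{|J|+1, n}$, and so $|J| = k$ is forced. Combining this with the previous paragraph, exactly one subset $I \subseteq [n]\setminus\{v\}$ of any size works, which is the desired statement.

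I do not expect any serious obstacle: the argument is a bookkeeping combination of the two earlier results, and the only subtlety is being careful to separate existence/uniqueness within a fixed $k$ (from \cref{cor:onek}) from the cross-$k$ uniqueness (from \cref{lem:circular-ext-sdiv-fixed-number-descents}).
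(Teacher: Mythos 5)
Your proposal is correct and follows essentially the same route as the paper: the paper also obtains this corollary by letting $k$ range over $0\leq k\leq n-2$ in \cref{cor:onek}, with the cross-$k$ uniqueness implicitly guaranteed by \cref{lem:circular-ext-sdiv-fixed-number-descents} exactly as you spell out. Your write-up simply makes that last bookkeeping step explicit.
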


\begin{figure}[h]
\includegraphics[width=\textwidth]{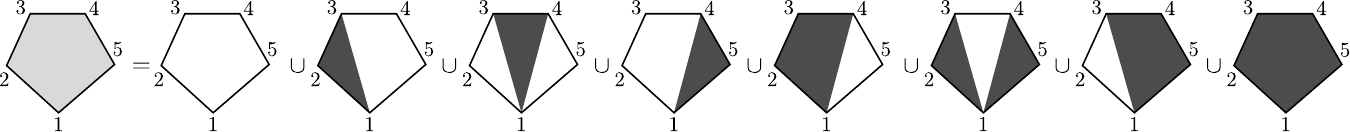}
        \caption{The all-grey tricolored subdivision of the $n$-gon corresponds to the 
        union of all kermit subdivisions for fixed $n$ and $0\leq k \leq n-2$.
        Here $v=1$.  We will see in \cref{ex:cube} and \cref{prop:tri-sub-polytope-as-union} that 
        this equation can be interpreted as a tiling of the unit hypercube $\mbox{\mancube}_{4} \subset \R^4$.}
        \label{fig:kermitsubdivisions}
\end{figure}

We represent \cref{cor:allk} by the diagram in \cref{fig:kermitsubdivisions}.
The left-hand side, which is the all-grey tricolored subdivision of the $n$-gon, represents the trivial cyclic order $C_{\emptyset}$  containing no triples.  Each of the eight bicolored subdivisions at the right is a kermit subdivision 
$\sigma_I$ and represents the partial cyclic order
$C_{\sigma_I}.$  \cref{cor:allk} then says that each circular extension of 
$C_{\emptyset}$ is a circular extension of precisely one of the partial cyclic orders $C_{\sigma_I}$ from the right-hand side.

\begin{remark}\label{rem:subcyclic}
There is a natural notion of \emph{sub-cyclic order} of a cyclic order.
If $C$ is a cyclic order on a set $X$, and $Y \subset X$
is a subset of $Y$, then $C$ restricts to a \emph{sub-cyclic order} $C|_Y$ on $Y^3$: namely,
$$C|_Y = \{(a,b,c) \ \vert \ a,b,c\in Y \text{ and }
(a,b,c)\in C\}.$$
Note that \cref{cor:allk} also applies to sub-cyclic orders of a cyclic order.
\end{remark}

\section{Magic Number Theorem for the $m=2$ amplituhedron}\label{sec:PT}

In this section, we show that every tiling of the hypersimplex $\Delta_{k+1, n}$ and every all-$Z$ tiling of $\mathcal{A}_{n,k,2}$ consists of $\binom{n-2}{k}$ tiles.

Recall the definition of the Parke-Taylor function from \cref{def:PT}.
Let $\mathbf{I}_n= 1 \dots n$ denote the identity permutation on $[n]$.
\begin{definition}\label{def:wtPT}
 Let $\simp{w}$ be a $w$-simplex for $\Delta_{k+1,n}$. 
 The \emph{weight function} of $\simp{w}$ is
 \begin{equation*}
     \Omega(\simp{w}):= \PT(w)
 \end{equation*}
 viewed as a rational function on $\widehat{\Gr}_{2,n}$, 
  the 
 affine cone over the Grassmannian $\Gr_{2,n}$.
\end{definition}

We note that the weight function $\Omega(\simp{w})$ is well-defined on $\unistr$,
the open locus in $\widehat{\Gr}_{2,n}$ where all Pl\"ucker coordinates are nonvanishing. We also remark that $\PT(w)$ depends only on the $n$-cycle $(w)$; that is, if $(u)=(w)$, then $\PT(w)=\PT(u)$.

\begin{remark}\label{rmk:PTgauge}
A point in $\unistr/T$ where $T=(\R^*)^n$
can be represented with a matrix $C$ whose top row is $(1,\ldots,1)$ and the bottom row is $(z_1,\ldots,z_n)$. Then the Pl\"ucker coordinates $P_{ij}$ of $C$ are simply $z_j-z_i$ and \cref{def:PT} reads:
\begin{equation}\label{eq:PT-in-x}
  \PT(w)=\frac{1}{(z_{w_2}-z_{w_1})(z_{w_3}-z_{w_2})\ldots (z_{w_{n}}-z_{w_{n-1}})(z_{w_1}-z_{w_n})}.
\end{equation}
Some of our proofs about Parke-Taylor functions will use the formulation of \eqref{eq:PT-in-x}. 
\end{remark}

\begin{definition}\label{def:weight_pos_pol}
    Let $\Gamma_S \subset \Delta_{k+1, n}$ be a full-dimensional positroid polytope,
    or any other subset of $\Delta_{k+1,n}$ which is a union of $w$-simplices  (cf. 
    \cref{prop:w-simp-triangulate-pos-poly}).  We define the \emph{weight function} of $\Gamma_S$ to be the sum of the weight functions of $w$-simplices included in $\Gamma_S$:
    \begin{equation*}
\Omega(\Gamma_S):=\sum_{\simp{w} \subset \Gamma_S} \Omega(\simp{w})
    \end{equation*}
\end{definition}

The first step in our argument is to show that all tiles have the same weight function. We contrast this with the normalized volume of tiles, which is far from constant (see \cref{fig:tiling}).

\begin{proposition}\label{prop:weight_tile}
  Let $\htile{\sdiv}$ be a tile of $\Delta_{k+1,n}$. Then the weight function of $\htile{\sdiv}$ is
  \begin{equation}\label{eq:weight_tile}
      \Omega(\htile{\sdiv})=(-1)^k \PT(\mathbf{I}_n).
  \end{equation}
  In particular, all tiles of $\Delta_{k+1,n}$ have the same weight function.
\end{proposition}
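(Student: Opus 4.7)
The plan is to reformulate the statement via \cref{cor:w-simp-in-tile-cyclic-order-version} as
\[
\Omega(\htile{\sdiv}) = \sum_{(w) \in \Ext(C_\sdiv)} \PT(w),
\]
and prove $\Omega(\htile{\sdiv}) = (-1)^k \PT(\mathbf{I}_n)$ by induction on the number of polygons of $\sdiv$. The base case is when $\sdiv$ consists of a single polygon, which forces $\sdiv$ to be either the all-white $n$-gon (so $k=0$, with unique circular extension $(\mathbf{I}_n)$) or the all-black $n$-gon (so $k=n-2$, with unique circular extension $(n,n-1,\ldots,1)$). A direct Pl\"ucker sign-count shows $\PT(n,n-1,\ldots,1) = (-1)^n \PT(\mathbf{I}_n) = (-1)^{n-2}\PT(\mathbf{I}_n)$, matching the claim in both cases.

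For the inductive step, since the dual graph of any subdivision of a polygon is a tree, $\sdiv$ contains a \emph{leaf polygon} $P$ with exactly one internal edge. Label the vertices of $P$ as $u_1, \ldots, u_p$, consecutive on $\mathbf{P}_n$ in clockwise order, with internal edge $u_p \to u_1$. Deleting $P$ (equivalently, the interior vertices $u_2, \ldots, u_{p-1}$) yields a bicolored subdivision $\sdiv'$ on $n-p+2$ vertices, of type $(k, n-p+2)$ if $P$ is white and $(k-p+2, n-p+2)$ if $P$ is black, with one fewer polygon. Since $C_\sdiv = C_{\sdiv'} \cup C_P$ where $C_P$ is the chain $(u_1,\ldots,u_p)$ if $P$ is white and $(u_p,\ldots,u_1)$ if $P$ is black, each $(w) \in \Ext(C_\sdiv)$ arises from a unique $(w') \in \Ext(C_{\sdiv'})$ by interleaving $u_2, \ldots, u_{p-1}$ (in the order dictated by $C_P$) into the appropriate arc of $(w')$ between $u_1$ and $u_p$.

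The key technical ingredient is a telescoping Pl\"ucker identity for these insertions. In the affine chart of \cref{rmk:PTgauge} where $P_{ij} = z_j - z_i$, partial fractions give
\[
\frac{P_{xy}}{P_{xv}\, P_{vy}} = \frac{1}{P_{xv}} + \frac{1}{P_{vy}},
\]
so summing $\PT$ over all single-vertex insertions of $v$ into an arc of $(w')$ with endpoints $a, b$ telescopes to $\PT(w') \cdot \frac{P_{ab}}{P_{av} P_{vb}}$. Iterating this for the $p-2$ nested insertions of $u_2, u_3, \ldots, u_{p-1}$ (each slotted into the arc whose far endpoint is $u_p$ in the white case, or $u_1$ in the black case), the intermediate numerators and denominators collapse to
\[
\sum_{(w)\mapsto (w')} \PT(w) \;=\; \epsilon_P \cdot \PT(w') \cdot \frac{P_{u_1, u_p}}{\prod_{i=1}^{p-1} P_{u_i, u_{i+1}}},
\]
where $\epsilon_P = 1$ if $P$ is white and $\epsilon_P = (-1)^p$ if $P$ is black, the extra sign arising from reversing Pl\"ucker indices in the black case.

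Summing over $(w') \in \Ext(C_{\sdiv'})$ and applying the inductive hypothesis $\Omega(\htile{\sdiv'}) = (-1)^{k'}\PT(\mathbf{I}_{[n]\setminus\{u_2,\ldots,u_{p-1}\}})$, the factor $P_{u_1, u_p}$ cancels against the corresponding factor in the denominator of the reduced $\PT$, while $\prod_{i=1}^{p-1} P_{u_i, u_{i+1}}$ reinserts exactly the consecutive Pl\"ucker factors along the boundary of $P$ that were missing, reconstituting $\PT(\mathbf{I}_n)$. A final sign check using $k' = k$ (white) or $k' = k-p+2$ (black) gives $\epsilon_P (-1)^{k'} = (-1)^k$ in both cases. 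The main obstacle will be verifying the iterated telescoping across all $p-2$ insertions and tracking the sign $\epsilon_P$ in the black case, but once the single-insertion identity is established the remaining manipulations are routine algebra.
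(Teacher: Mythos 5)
Your proposal is correct, and it takes a genuinely different route from the paper's. The paper fixes $\sdiv$ and inducts on $n$: it sets $F=\Omega(\htile{\sdiv})-(-1)^k\PT(\mathbf{I}_n)$, passes to the gauge of \cref{rmk:PTgauge}, shows via a swap-of-adjacent-letters pairing and the three-term Pl\"ucker relation (\cref{lem:no_poles_except_in_bdries_of_polygons}) that poles can only sit on edges of polygons of $\sdiv$, computes the residue at $z_i=z_{i-1}$ by contracting a boundary edge of $\mathbf{P}_n$ (\cref{lem:residue_in_ii+1}), and then kills $F$ by a degree count ($F$ would need a denominator of degree at least $n$ while only $n-3$ internal edges are available). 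You instead induct on the number of polygons, strip a leaf polygon $P$ with consecutive vertices $u_1,\dots,u_p$, and sum the interleavings exactly via the telescoping eikonal identity, obtaining $\sum_{(w)\mapsto(w')}\PT(w)=\epsilon_P\,\PT(w')\,P_{u_1u_p}/\prod_i P_{u_iu_{i+1}}$; I checked that the iterated telescoping, the black-case sign $\epsilon_P=(-1)^p$ with $k'=k-p+2$, and the reassembly into $\PT(\mathbf{I}_n)$ (which uses exactly the consecutiveness of the leaf polygon's vertices) all work, so the steps you flagged are not gaps. Your route is more constructive and proves a stronger per-polygon factorization, close in spirit to \cref{prop:weight_facet} and to shuffle/Kleiss--Kuijf-type Parke--Taylor identities, with no pole analysis needed; the paper's route avoids all insertion bookkeeping at the cost of the residue computation plus the degree bound. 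Two points to make explicit in a write-up: the identity $P_{xy}/(P_{xv}P_{vy})=1/P_{xv}+1/P_{vy}$ is only valid in the gauge-fixed chart, so either lift the chart computation to $\widehat{\Gr}_{2,n}$ by the same torus-action/homogeneity reduction the paper performs or verify the single-insertion identity directly from the three-term Pl\"ucker relation; and in your bijection the letters $u_2,\dots,u_{p-1}$ are shuffled with whatever letters of $(w')$ already lie in the relevant arc (from $u_p$ to $u_1$ in the black case), not placed into an empty arc---your telescoping handles this, but the statement should say so.
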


We need some preliminaries before proceeding to the proof of \cref{prop:weight_tile}.
\begin{definition}
    For a function $F\in\mathbb{C}(\widehat{\Gr}_{2,n})$  
    all of whose poles are simple, we define its \emph{residue} at $P_{ij}=0$ by
    \[\Res_{P_{ij}=0}F:=\lim_{P_{ij\to 0}}P_{ij}F.\]
\end{definition}
This definition resembles, of course, the usual definition of a residue, only that we define it for functions rather than forms.

 \begin{definition}\label{def:Ext}
 Let $C$ be a cyclic order on $[n]$, and let $i,j\in [n].$
We let  $$\Ext_{(ij)}(C):=\{C' \in \Ext(C): C'= (\cdots i\ j \cdots)\}$$ 
 be the set of circular extensions of $C$ in which $i$ immediately precedes $j$. 
 \end{definition}

The next lemma constrains the poles of $\Omega(\htile{\sdiv})$.
\begin{lemma}\label{lem:no_poles_except_in_bdries_of_polygons} Let $\sdiv$ be a bicolored subdivision of $\mathbf{P}_n$. Then
\[\Res_{P_{ij}=0}\Omega(\htile{\sdiv})=0,\] for all $i,j$ such that $(i,j)$ is not an edge of a white or black polygon of $\sigma$. 
\end{lemma}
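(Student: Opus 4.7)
My plan is a sign-reversing pairing argument on the terms contributing to the residue. By \cref{cor:w-simp-in-tile-cyclic-order-version}, I write $\Omega(\htile{\sdiv}) = \sum_{(w) \in \Ext(C_\sdiv)} \PT(w)$. Each $\PT(w)$ has simple poles at $P_{ab}=0$ exactly when $a,b$ are cyclically adjacent in $(w)$, so only cycles in which $i,j$ are cyclically adjacent contribute to $\Res_{P_{ij}=0}$. Partition these into $T^+ = \{(w) : (w)=(\cdots i\, j\cdots)\}$ and $T^- = \{(w) : (w)=(\cdots j\, i\cdots)\}$; for $(w)$ in either set, deleting $j$ yields a cycle $(u)$ on $[n]\setminus\{j\}$, giving $j$-deletion bijections $\phi : T^+ \to U^+$ and $\psi : T^- \to U^-$ onto their images. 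Working in the gauge of \cref{rmk:PTgauge} where $P_{ab} = z_b - z_a$, a short direct computation (using $P_{ji}=-P_{ij}$) gives
\[
\Res_{P_{ij}=0}\PT(\phi^{-1}(u)) = +\PT(u) \qquad \text{and} \qquad \Res_{P_{ij}=0}\PT(\psi^{-1}(u)) = -\PT(u).
\]

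It therefore suffices to prove $U^+ = U^-$, for then the $+$ and $-$ contributions indexed by each $(u)$ cancel, yielding $\Res_{P_{ij}=0}\Omega(\htile{\sdiv})=0$. Equivalently, I must show that for any cycle $(u)$ on $[n]\setminus\{j\}$, inserting $j$ just after $i$ in $(u)$ produces an extension of $C_\sdiv$ if and only if inserting $j$ just before $i$ does.

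To prove this symmetry, I analyze the constraints on the insertion of $j$ polygon by polygon. Let $P$ be a polygon of $\sdiv$ containing $j$, and let $p_-^P, p_+^P$ denote the polygon-neighbors of $j$ in $P$. The $P$-constraint on a candidate extension $(u')$ is that $j$ lie in the open cyclic arc from $p_-^P$ to $p_+^P$ in $(u)$; because $(u)$ respects the polygon-cyclic-order on $P\setminus\{j\}$, this arc contains no other vertex of $P$. There are three cases. If $i\notin P$, the $P$-constraint depends only on whether $i$ lies in this arc in $(u)$, and both candidate insertions of $j$ adjacent to $i$ are equally valid or equally invalid. If $i \in P$ but $i\notin\{p_-^P, p_+^P\}$, then polygon-cyclic-order on $(u)$ forces $i$ to lie in the opposite arc $(p_+^P, p_-^P)$, so both insertions violate $P$'s constraint. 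The remaining case $i\in\{p_-^P, p_+^P\}$ is precisely the situation where $(i,j)$ is a polygon edge of $P$, which is excluded by hypothesis. So in every case compatible with the hypothesis, the validity of the two candidate insertions agrees for every $P$ containing $j$, giving $U^+ = U^-$.

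The main obstacle will be the combinatorial case analysis, especially the subcase where $i$ is a non-neighbor vertex of a polygon $P$ containing $j$: the key observation is that $(u)$'s compatibility with the polygon-cyclic-order on $P\setminus\{j\}$ forces $p_-^P$ and $p_+^P$ to be cyclically adjacent within the restriction of $(u)$ to $P\setminus\{j\}$, which in turn pushes any other $P$-vertex (and in particular $i$) strictly outside the open arc $(p_-^P, p_+^P)$. The other technical point is careful sign bookkeeping in the residue calculation: the $P_{ji}=-P_{ij}$ sign is what makes the contributions from $T^+$ and $T^-$ genuinely cancel rather than add.
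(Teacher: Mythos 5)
Your proof is correct and takes essentially the same route as the paper: both restrict attention to extensions in which $i,j$ are cyclically adjacent and cancel $\Ext_{(ij)}(C_\sdiv)$ against $\Ext_{(ji)}(C_\sdiv)$ via the swap-of-adjacent-$i,j$ pairing (your ``same $j$-deletion'' pairing is exactly that map). The differences are cosmetic: the paper justifies the bijection in one line (handling the diagonal-of-a-polygon case separately, where no term has a pole at $P_{ij}=0$ at all) and gets the pairwise cancellation from a three-term Pl\"ucker relation, whereas you verify the bijection by a polygon-by-polygon insertion analysis and obtain the residues $\pm\PT(u)$ in the gauge of \cref{rmk:PTgauge}, which is the same contraction computation the paper uses later in the proof of \cref{prop:weight_tile}.
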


\begin{proof}
First, suppose 
$(i,j)$ is a diagonal (but not an edge) of a (black or white) polygon in $\sigma$. 
Consider $\simp{w} \subset \htile{\sdiv}$. By \cref{thm:when-w-simp-in-tile}, $i,j$ are not adjacent in $C_w$, hence $P_{ij}=0$ is not a pole of $\Omega(\simp{w})$. It follows that also $\Omega(\htile{\sdiv})$ does not have a pole at $P_{ij}=0$.

Now suppose $i,j$ are not vertices of the same polygon in $\sigma$. 
By \cref{cor:w-simp-in-tile-cyclic-order-version}, the $w$-simplices $\simp{w} \subset \Gamma_\sigma$ such that $\PT(w)$ has  poles at $P_{ij}=0$ are exactly those for which $(w) \in \Ext_{(ij)}(C_\sigma) \cup \Ext_{(ji)}(C_\sigma)$. We define the map 
\begin{align*}
    \phi \colon \Ext_{(ij)}(C_\sigma) &\to \Ext_{(ji)}(C_\sigma)\\
    C_w=(\cdots i\ j \cdots) & \mapsto (\cdots j\ i \cdots)=:C_{\phi(w)}
\end{align*}
which swaps the order of $i$ and $j$.
The map $\phi$ is well-defined and a bijection: $w$ satisfies \cref{thm:when-w-simp-in-tile} if and only if $\phi(w)$ does, since we are not changing the order of numbers which are vertices of the same polygon. 
(The geometric interpretation of $\phi$---see \cref{cor:w-simp-in-tile-cyclic-order-version} and \cref{prop:w-simp-facet-sharing}---is that $\simp{w}$ and $\simp{\phi(w)}$ are $w$-simplices in $\htile{\sdiv}$ which intersect in a common facet on a hyperplane $x_{[i, j-1]}=d$.) We claim that for all $(w) \in \Ext_{(ij)}(C_\sigma)$
\begin{equation}\label{eq:pole_cancellation}\Res_{P_{ij}=0}\left(\Omega(\simp{w})+ \Omega(\simp{\phi(w)})\right)=0.\end{equation}
It then will follow immediately from
\eqref{eq:pole_cancellation} that $\Omega(\htile{\sdiv})$ does not have a pole at $P_{ij}=0$. 

To show \eqref{eq:pole_cancellation}, suppose $C_w= (\cdots h\ i\ j \ l\  \cdots)$ and $C_{\phi(w)}= (\cdots h\ j\ i\ l\  \cdots)$. Let $Q$ be the product of common factors in $\Omega(\simp{w})$ and $\Omega(\simp{\phi(w)})$, which are of the type $1/P_{rs}$, with $i,j \not \in \{r,s\}$. Then
\begin{align*}\Omega(\simp{w})+\Omega(\simp{\phi(w)})&=Q\left(\frac{1}{P_{h,i}P_{i,j}P_{j,l}}+\frac{1}{P_{h,j}P_{j,i}P_{i,l}}\right)=\frac{Q}{P_{i,j}}\frac{P_{h,j}P_{i,l}-P_{h,i}P_{j,l}}{P_{h,i}P_{j,l}P_{h,j}P_{i,l}}=\\
&=\frac{Q}{P_{i,j}}\frac{P_{h,l}P_{i,j}}{P_{h,i}P_{j,l}P_{h,j}P_{i,l}}
=\frac{QP_{h,l}}{P_{h,i}P_{j,l}P_{h,j}P_{i,l}},\end{align*}
where in the third equality we used the Pl\"ucker relation $P_{h,j}P_{i,l}-P_{h,i}P_{j,l}+P_{h,l}P_{i,j}=0$. Therefore $\Omega(\simp{w})+\Omega(\simp{\phi(w)})$ does not have a pole at $P_{ij}=0$, which shows \eqref{eq:pole_cancellation}.
\end{proof}

The next proof will use the formulation from \eqref{eq:PT-in-x}, so we introduce the following notation.
  \begin{notation}\label{notation:pt}
  We define 
  the notation $\omega(\simp{w}):= \Omega(\simp{w})|_{P_{ij} \mapsto (z_j-z_i)}$,
  $\omega(\htile{\sdiv}):= \Omega(\htile{\sdiv})|_{P_{ij} \mapsto (z_j-z_i)}$, and $\pt(w):= \PT(w)|_{P_{ij} \mapsto (z_j-z_i)}.$
\end{notation}

\begin{proof}[Proof of \cref{prop:weight_tile}] 
Consider the function 
\[F=\Omega(\htile{\sdiv})-(-1)^k\PT(\mathbf{I}_n),\]
which is a rational function on $\widehat{\Gr}_{2,n}.$ We would like to show that $F$ is the zero function. We first perform a series of reductions. 

 First, it suffices to show that $F(V)=0$ for all $V \in \unistr$, the dense subset of $\widehat{\Gr}_{2,n}$ where all Pl\"ucker coordinates are nonvanishing. Note that $F$ is well-defined on this subset. 

Choose $V\in \unistr$ and let $A$ be a matrix representative. Recall that the torus $(\mathbb{R}^*)^n$ acts on $\widehat{\Gr}_{2,n}$ by rescaling columns. Scaling a column of $A$ by $t \in \mathbb{R}^*$ will multiply $F(A)$ by $t^{-2}$. This gives the second reduction: to show $F(V)$ is zero, it suffices to show that $F$ is zero on a single point in the torus orbit of $V$.

The torus orbit of $V$ contains a point of the form 
\[B=\begin{bmatrix}
    1 & 1 & \cdots & 1 & 1\\
    z_1 & z_2 & \cdots &z_{n-1} & z_n\\
\end{bmatrix}\]
where $z_1, \dots, z_n \in \mathbb{R}$.
Note that the Pl\"ucker coordinate $P_{ij}(B)$ is equal to $(z_j-z_i)$. This is the third reduction: to show that $F(B)$ is zero, it suffices to show that $$f(z_1, \dots, z_n):=F|_{P_{ij} \mapsto (z_j - z_i)}$$ is zero as a function on $\mathbb{R}^n$.

To show that $f(z_1, \dots, z_n)=0$, we proceed by induction on $n.$ The base case $n=3$ is simple and can be checked by hand. Now suppose we have proven $f=0$ for all $n'<n.$ In what follows, we will deduce $f=0$ for $n$ by carefully analyzing the poles of $f$.

Because $\PT$ functions are homogeneous of degree $-n$ in Pl\"ucker coordinates, $f$ is either identically zero or of degree $-n$ in the $z_i$. Because $\PT$ functions have (only) simple poles of the form $P_{ij}=0$, any poles of $f$ are simple and lie at $z_j-z_i=0$ for some $i,j$.
\cref{lem:no_poles_except_in_bdries_of_polygons} implies that all poles of $\Omega(\htile{\sdiv})$ are of the form $P_{ij}=0$ where $(i,j)$ is an edge of some polygon in $\sdiv$. The poles of $\PT(\mathbf{I}_n)$ are at $P_{i-1,i}=0$. So the only possible poles of $f$ are $z_j-z_i=0$ for $(i,j)$ an edge of a polygon in $\sdiv$ (this includes the case $z_i-z_{i-1}=0$).

We claim that $f$ does not have poles of the form $z_{i}-z_{i-1}=0$. That is, we have the following.

\begin{claim}\label{lem:residue_in_ii+1}
    Let $i \in [n]$. Then, using \cref{notation:pt},
   we have 
    $$\lim_{z_i\to z_{i-1} }(z_i - z_{i-1}) \ \omega(\htile{\sdiv})= \lim_{z_i\to  z_{i-1}} (z_i - z_{i-1}) (-1)^{k}\pt(\mathbf{I}_n)$$
    and thus $\lim_{z_i \to  z_{i-1}} (z_i -z_{i-1})f=0.$
\end{claim}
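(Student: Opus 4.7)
The plan is to prove the claim by induction on $n$, taking the Proposition itself as the inductive hypothesis for all strictly smaller values.

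For the right-hand side, writing $\pt(\mathbf{I}_n)=1/\prod_{j\in[n]}(z_{j+1}-z_j)$ (indices cyclic), cancellation of the factor $(z_i-z_{i-1})$ and the substitution $z_i=z_{i-1}$ in the surviving factors produce
\[
\lim_{z_i\to z_{i-1}}(z_i-z_{i-1})(-1)^k\pt(\mathbf{I}_n)=(-1)^k\pt(\widetilde{\mathbf{I}}_{n-1}),
\]
where $\widetilde{\mathbf{I}}_{n-1}$ is the identity cyclic order $(1,\dots,i-1,i+1,\dots,n)$ on $[n]\setminus\{i\}$.

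For the left-hand side, by \cref{cor:w-simp-in-tile-cyclic-order-version} we have $\omega(\htile{\sdiv})=\sum_{(w)\in\Ext(C_\sdiv)}\pt(w)$. A summand has a pole at $z_i=z_{i-1}$ precisely when $i-1$ and $i$ are cyclically adjacent in $C_w$. Letting $P$ denote the polygon of $\sdiv$ containing the boundary edge $(i-1,i)$, the chain of $P$ forces this adjacency to appear as $(i-1,i)$ if $P$ is white and as $(i,i-1)$ if $P$ is black, so each residue equals $\varepsilon\,\pt(\tilde w)$, where $\tilde w$ is $(w)$ with $i$ deleted and $\varepsilon=+1$ (white) or $-1$ (black).

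The heart of the proof is to realize $\sum\pt(\tilde w)$ as the weight of a smaller tile. I would define the \emph{contracted} bicolored subdivision $\sigma'$ of the $(n-1)$-gon on $[n]\setminus\{i\}$ by relabeling $i$ as $i-1$ in every polygon of $\sdiv$ (and removing any resulting duplicate). Only $P$ itself can lose a vertex; if $P$ was a triangle it degenerates to a $2$-gon, in which case delete $P$ and merge the two polygons now sharing that edge—which necessarily had the same color opposite to $P$, so the merge preserves the bicoloring. Granted the following:
\begin{itemize}
\item $\sigma'$ is a bicolored subdivision of type $(k,n-1)$ when $P$ is white and of type $(k-1,n-1)$ when $P$ is black: only $P$'s own triangulation count drops, by one iff $P$ itself is black, and merging same-colored polygons preserves their combined triangle count.
\item $(w)\mapsto\tilde w$ restricts to a bijection between the residue-contributing cyclic orders and $\Ext(C_{\sigma'})$, with inverse inserting $i$ just after (resp.\ just before) $i-1$ in the white (resp.\ black) case. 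The key point is that each chain of a fan polygon $P_j$ ($j\geq 2$) around $i$, after substituting $i-1$ for $i$, becomes precisely the corresponding chain of $C_{\sigma'}$, so it holds in $C_w$ iff it holds (after substitution) in $C_{\tilde w}$. The descent condition transfers automatically via \cref{lem:circular-ext-sdiv-fixed-number-descents}.
\end{itemize}
The inductive hypothesis applied to $\sigma'$ then gives $\sum_{\tilde w\in\Ext(C_{\sigma'})}\pt(\tilde w)=(-1)^{k'}\pt(\widetilde{\mathbf{I}}_{n-1})$ with $k'=k$ or $k-1$, and the sign bookkeeping $\varepsilon\cdot(-1)^{k'}=(-1)^k$ in both cases matches the RHS.

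The main obstacle is the verification of the two bulleted items in the degenerate configurations, where $P$ is a triangle and a cascade of merges may be triggered. A key simplification is that no fan polygon $P_j$ with $j\geq 2$ can contain $i-1$ as a vertex: such a $P_j$ would need a diagonal from $i-1$, and its boundary path from $i$ back to $i-1$ would then enclose the edge $(i-1,i)\subset P_1$, violating the fact that $\sdiv$ is a polygonal decomposition. Hence only $P$ itself can degenerate, and the cascade touches only the polygons immediately neighboring $P$ across its non-contracted edges.
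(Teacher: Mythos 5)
Your proposal is correct and follows essentially the same route as the paper: both compute the residue of each $\pt(w)$ term (nonzero only when $i-1,i$ are adjacent, with order and sign forced by the color of the polygon containing the boundary edge), contract that edge to obtain a bicolored subdivision of the $(n-1)$-gon of type $(k,n-1)$ or $(k-1,n-1)$, identify the contributing circular extensions with $\Ext(C_{\sdiv'})$ via the delete/insert bijection, and invoke the inductive hypothesis of \cref{prop:weight_tile} for smaller $n$ with the same sign bookkeeping. The only difference is cosmetic (relabeling $i$ as $i-1$ instead of introducing a vertex $\star$) plus your explicit treatment of the degenerate triangle/merge case, which the paper dismisses as ``easy to see'' and which does go through as you outline.
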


Before proving \cref{lem:residue_in_ii+1}, we explain why this claim implies that $f$ is zero. Indeed, we can write $f=\frac{P}{Q},$ where $P$ is a homogenous polynomial in $z_1,\ldots, z_n$ of degree $h\geq0,$ and $Q$ is a product of factors of the form $(z_j-z_i),$ where each factor appears at most once. Suppose for the sake of contradiction that $f$ is not zero, and thus is of degree $-n$. Then the degree of $Q$ is $h+n\geq n.$ By \cref{lem:residue_in_ii+1}, the factors of $Q$ are $(z_j-z_i)$ where $(i,j)$ is an edge of a polygon in $\sdiv$ but is not an edge of $\mathbf{P}_n$. There are at most $n-3$ such arcs, so $\text{deg}(Q)\leq n-3$. But this contradicts the fact that $\text{deg}(Q)\geq n.$

We now prove \cref{lem:residue_in_ii+1}, which will conclude the proof.  We would like to compute the sum over $\simp{w} \subset \htile{\sdiv}$ of terms
\[
\lim_{z_i \to z_{i-1}}(z_i-z_{i-1})\omega(\simp{w}).\]

Fix $\simp{w} \subset \htile{\sdiv}$. If $i-1,i$ are not adjacent in the cyclic order $C_w$, then 
\[\lim_{z_i \to z_{i-1}}(z_i-z_{i-1})\omega(\simp{w})=0.\]

If $i-1,i$ are adjacent in $C_w$, their order is dictated by the color of the polygon $p$ in $\sdiv$ containing the edge $(i-1,i)$. If $p$ is white, then 
$C_w=(\cdots i-1\ i\cdots) \in \Ext_{(i-1,i)}(C_\sdiv)$;
if $p$ is black, then
$C_w=(\cdots i\ i-1 \cdots)\in \Ext_{(i,i-1)}(C_\sdiv)$.
Let $c=0$ if $p$ is white and let $c=1$ if $p$ is black.

Consider now the limit $z_i\to z_{i-1}.$ 
Write $w=w_1\dots w_n$ with the $i$ and $i-1$ in positions $\{j,j+1\}.$
It is immediate that 
\begin{small}
\begin{align}\label{eq:lim1}
\begin{split}
     \lim_{z_i\to z_{i-1}}(z_i-z_{i-1})\omega(\simp{w}) &= \lim_{z_{w_j}\to z_{w_{j+1}}} \frac{(-1)^c}{(z_{w_{2}}-z_{w_{1}})(z_{w_3}-z_{w_2})\ldots (z_{w_{j}}-z_{w_{j-1}})(z_{w_{j+2}}-z_{w_{j+1}})\ldots (z_{w_1}-z_{w_n})}\\
     &= \frac{(-1)^c}{(z_{w_{2}}-z_{w_{1}})(z_{w_3}-z_{w_2})\ldots (z_{w_{j+1}}-z_{w_{j-1}})(z_{w_{j+2}}-z_{w_{j+1}})\ldots (z_{w_1}-z_{w_n})}.
\end{split}
\end{align}
\end{small}

Let $\sdiv'$
be the bicolored subdivision of the $(n-1)$-gon with vertices $N'=\{1, 2, \dots, i-2, \star, i+1, \dots, n\}$ obtained from $\sdiv$ by contracting the boundary edge $(i-1, i)$ to the vertex $\star.$
Let $\rho$ be the map
\begin{equation*}
   \rho: \{C_w \in C_\sdiv:  i-1,i \text{ adjacent in }C_w\} \to \Ext(C_{\sdiv'})
   \end{equation*}
where $\rho(C_w):=C_{w'}$ is the cyclic order on $N'$ obtained from $C_w$ by identifying $i-1$ and $i$ with $\star$. It is easy to see that $\rho$ is a bijection. Then \eqref{eq:lim1} can be rewritten as
\begin{equation}\label{eq:contracting_pt}\lim_{z_i \to z_{i-1}}(z_i-z_{i-1})\omega(\simp{w})=(-1)^{c}\omega(\simp{w'}).\end{equation}
   
By the induction hypothesis
\begin{equation}\label{eq:ind_hp}
 \omega(\Gamma_{\sdiv'})=\sum_{(u)\in \Ext(C_{\sdiv'})}\omega(\simp{u}) = (-1)^{k-c}\pt(\mathbf{I}_{N'}).   
\end{equation}

Putting everything together, we have 
\begin{align*}
    \lim_{z_i \to z_{i-1}}(z_i - z_{i-1})\omega(\htile{\sdiv}) &= \sum_{\substack{(w) \in \Ext(C_\sdiv):\\i-1, i \text{ adj in } C_w}}
    \lim_{z_i \to z_{i-1}}(z_i - z_{i-1})\omega(\simp{w}) \\ &=\sum_{\substack{(w) \in \Ext(C_\sdiv):\\i-1, i \text{ adj in } C_w}} (-1)^c \omega(\simp{w'})\\
    &= (-1)^c\sum_{(u) \in \Ext(C_{\sdiv'})}\omega(\simp{u})\\
    &= (-1)^k \pt(\mathbf{I}_{N'}).
\end{align*}
In the second equality we use \eqref{eq:contracting_pt}, in the third the fact that $\rho$ is a bijection, and in the fourth we use \eqref{eq:ind_hp}. By inspection, the last line is equal to $$\lim_{z_i \to z_{i-1}} (z_i - z_{i-1}) (-1)^k \pt(\mathbf{I}_n),$$ which proves \cref{lem:residue_in_ii+1}.  
\end{proof}

\begin{proposition}\label{prop:samesize}
Let $R \subset \Delta_{k+1,n}$ be a subset of $\Delta_{k+1,n}$ which admits a positroid tiling, i.e. it can be written 
as the union
of positroid tiles $\{\htile{\sdiv}\}_{\sdiv \in \mathcal{S}}$ whose interiors are disjoint.  Then all positroid tilings of $R$ have the same cardinality.    
\end{proposition}
\begin{proof}
For \emph{any} tiling $\{\htile{\sdiv}\}_{\sdiv \in \mathcal{S}}$ of $R$, we have
    \begin{equation*}
        \Omega(R)=\sum_{\sdiv \in \mathcal{S}} \sum_{\simp{w} \subset \htile{\sdiv}}\Omega(\simp{w})=\sum_{\sdiv \in \mathcal{S}} \Omega(\htile{\sdiv})=|\mathcal{S}| (-1)^k \PT(\mathbf{I}_n),
    \end{equation*}
    where for the first and second equality we used that $\htile{\sdiv}=\cup_{\simp{w} \subset \htile{\sdiv}} \simp{w}$ and that the tiles $\{\htile{\sdiv}\}_{\sdiv \in \mathcal{S}}$ have disjoint interiors and cover $\Delta_{k+1,n}$. For the last equality we used \cref{prop:weight_tile}.
    
    It follows that each positroid tiling of $R$ must have the same cardinality.
    \end{proof}

\begin{theorem}\label{thm:magic-number}
Every positroid tiling of $\Delta_{k+1, n}$ consists of $\binom{n-2}{k}$ tiles.
\end{theorem}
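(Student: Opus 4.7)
The plan is to use the weight function as a numerical invariant that detects the number of tiles in any positroid tiling, together with a single known tiling (the kermit tiling) to fix the value of the invariant.

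First I would observe that for any positroid tiling $\{\Gamma_{\sigma_1},\dots,\Gamma_{\sigma_N}\}$ of $\Delta_{k+1,n}$, every $w$-simplex $\simp{w}$ (for $w\in D_{k+1,n}$) is contained in exactly one tile. Indeed, by \cref{prop:StanleyTriangulation} the $w$-simplices form a triangulation of $\Delta_{k+1,n}$, and by \cref{prop:w-simp-triangulate-pos-poly} each tile $\Gamma_{\sigma_i}$ is itself a union of $w$-simplices. Since the open tiles are disjoint and cover the interior of $\Delta_{k+1,n}$, each $w$-simplex sits inside a unique $\Gamma_{\sigma_i}$. Consequently, summing \cref{def:weight_pos_pol} over the tiling gives
\begin{equation*}
\sum_{i=1}^{N}\Omega(\Gamma_{\sigma_i})\;=\;\sum_{w\in D_{k+1,n}}\PT(w),
\end{equation*}
where the right-hand side is independent of the chosen tiling.

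Next I would apply \cref{prop:weight_tile}, which asserts that $\Omega(\Gamma_\sigma)=(-1)^k\PT(\mathbf{I}_n)$ for every tile. This turns the left-hand side above into $N\cdot(-1)^k\PT(\mathbf{I}_n)$, so
\begin{equation*}
N\cdot(-1)^k\PT(\mathbf{I}_n)\;=\;\sum_{w\in D_{k+1,n}}\PT(w).
\end{equation*}
To pin down the right-hand side, I would evaluate both sides for the kermit tiling $\{\Gamma_{\sigma_I}\}_{I\in\binom{[2,n-1]}{k}}$ furnished by \cref{prop:tilings}, which consists of exactly $\binom{n-2}{k}$ tiles. For this particular tiling, the identity above reads
\begin{equation*}
\binom{n-2}{k}\cdot(-1)^k\PT(\mathbf{I}_n)\;=\;\sum_{w\in D_{k+1,n}}\PT(w).
\end{equation*}

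Finally, since $\PT(\mathbf{I}_n)$ is a nonzero rational function on $\unistr$, I can divide both displayed identities to conclude $N=\binom{n-2}{k}$ for every positroid tiling. The only non-routine ingredients are (i) the constancy of the weight function on tiles (\cref{prop:weight_tile}, already proved) and (ii) the existence of at least one tiling of known cardinality (the kermit tiling from \cref{prop:tilings}); once these are in hand, the theorem is immediate, and the only thing to be careful about is that each $w$-simplex lies in exactly one tile, so that the weight-function identity aggregates correctly across the tiling.
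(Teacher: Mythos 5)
Your proposal is correct and follows essentially the same argument as the paper: both compute the weight function of $\Delta_{k+1,n}$ in two ways by aggregating $\Omega$ over the $w$-simplices inside each tile, apply \cref{prop:weight_tile} to see each tile contributes $(-1)^k\PT(\mathbf{I}_n)$, and compare with the kermit tiling of \cref{prop:tilings} to conclude the cardinality is $\binom{n-2}{k}$. Your extra care about each $w$-simplex lying in exactly one tile and about $\PT(\mathbf{I}_n)$ being nonzero matches the paper's (slightly more implicit) reasoning.
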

\begin{proof}
By \cref{prop:samesize}, each tiling of $\Delta_{k+1,n}$ has the same cardinality. 
    Since the kermit tiling in \cref{prop:tilings} has cardinality ${n-2 \choose k}$, we conclude that all tilings of $\Delta_{k+1,n}$ have this cardinality. 
\end{proof}

\begin{example}
Consider the collection $\mathcal{S}$ of bicolored subdivisions in \cref{fig:tiling}. Then $\{\htile{\sdiv}\}_{\sdiv \in \mathcal{S}}$ is a tiling for $\Delta_{4,7}$. This can be verified by checking that each $w$-simplex in $\Delta_{4,7}$ lies in a unique tile $\htile{\sdiv}$ with $\sdiv \in \mathcal{S}$. By T-duality (cf. \cref{thm:Tduality}), $\{\atile{\sdiv}\}_{\sdiv \in \mathcal{S}}$ is an all-Z tiling of $\mathcal{A}_{7,3,2}$. The tiling contains $10={7-2 \choose 3}=M_{7,3,2}$ tiles (cf. \cref{cor:magic-number}).
\begin{figure}[h]
\includegraphics[width=0.9\textwidth]{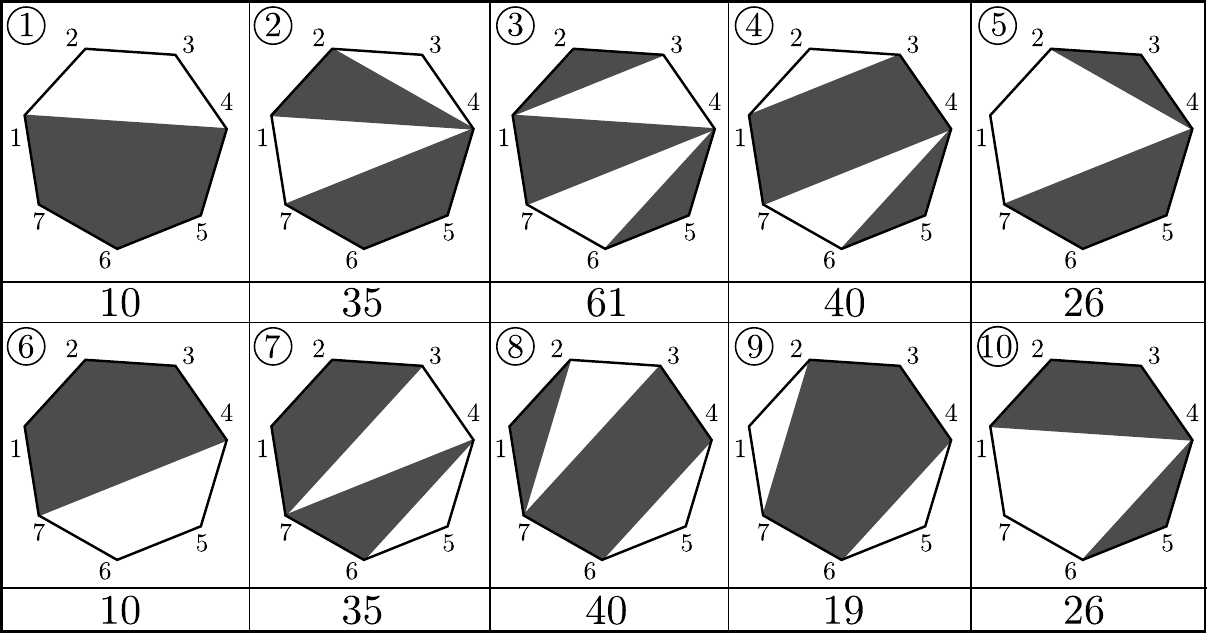}
        \caption{A collection of $10={7-2 \choose 3}=M_{7,3,2}$ bicolored subdivisions of type $(3,7)$ (labelled from $\textcircled{\raisebox{-0.9pt}{1}}$ to $\textcircled{\raisebox{-0.9pt}{10}}$) which gives a tiling for $\Delta_{4,7}$ and $\mathcal{A}_{7,3,2}$. The number in the box below each bicolored subdivision is the volume of the corresponding positroid polytope in $\Delta_{4,7}$. The sum of their volumes is the Eulerian number $E_{3,6}=302$.}
        \label{fig:tiling}
\end{figure}
\end{example}

 Using results on the positive Dressian, one can show that every full-dimensional positroid polytope has a positroid tiling. This will allow us to apply \cref{prop:samesize} to full-dimensional positroid polytopes as well.

 The proof of the following lemma assumes some knowledge
 of plabic graphs; see \cite{FWZ7} for background. A matroid is \emph{series-parallel} matroid if it does not have the uniform matroid $U_{2,4}$ or the graphical matroid $M_{K_4}$ as a minor.
\begin{lemma}\label{lem:tree}
Suppose that a positroid $\mathcal{M}$ is a connected matroid.  
If $\mathcal{M}$  
is series-parallel,
then it comes from a reduced plabic graph $G$ which is a tree. 
\end{lemma}
\begin{proof}
Since $\mathcal{M}$ is connected, it comes from a reduced plabic graph $G$
which is also be connected \cite{ARW}.  Assume that $G$ is not a tree.
Then it must have an internal cycle, in which case we can delete some 
edges to obtain a subgraph $G'$ which, apart from lollipops on 
the boundary, is the plabic graph of the top cell of $\Gr_{2,4}^{\geq 0}$
(that is, it has a bipartite $4$-cycle with each of the four vertices
of the cycle attached by a single path to a boundary vertex).
The cell $S_{G'}$ is contained in the boundary of $S_G$,
and hence the positroid polytope $\Gamma_{G'}$ (an octahedron)
either has the same dimension
as $\Gamma_G$, or it is 
contained in the boundary of the positroid polytope $\Gamma_{G}$.

If $\dim \Gamma_{G'} = \dim \Gamma_G$, then the positroids of $G$ and of 
$G'$ have the same
number of connected components, which implies that $G'$ is connected
(no lollipops on the boundary) and hence  $G=G'$. This implies that $\mathcal{M}$ is the uniform matroid $U_{2,4}$, which is not series-parallel.
On the other hand, if the octahedron $\Gamma_{G'}$ is contained 
in the boundary of the positroid polytope $\Gamma_{G}$, then 
$\Gamma_G$ has a face which is an  octahedron, hence
by \cite[Corollary 6.5]{SW21}, $\mathcal{M}$ is not series-parallel.
\end{proof}
The converse of \cref{lem:tree} is also true, but we do not need it here.

\begin{proposition}\label{lem:contains}
Let $\Gamma_{\mathcal{M}} \subset \Delta_{k+1,n}$ be a 
positroid polytope.
Then there is a positroid subdivision of $\Delta_{k+1,n}$
(that is, a polyhedral subdivision of $\Delta_{k+1,n}$ into 
positroid polytopes) that contains $\Gamma_{\mathcal{M}}$.
Moreover, each full-dimensional positroid polytope admits a 
positroid tiling.
\end{proposition}
\begin{proof}
The proof of \cite[Theorem 5.1]{SW21} uses the rank
function of the matroid
$\mathcal{M}$ to construct a point $P$ in the positive Dressian,
such that the corresponding regular matroid subdivision $\mathcal{D}_P$
of $\Delta_{k+1,n}$ contains $\Gamma_{\mathcal{M}}$ as a face.  By \cite[Theorem 9.11]{LPW}, each
face of $\mathcal{D}_P$ is a positroid polytope and so $\mathcal{D}_P$ is a regular positroid subdivision.

To prove the second statement, we choose a finest regular positroid subdivision $\mathcal{D}$ of $\Delta_{k+1,n}$ (i.e. $\mathcal{D}$ has no non-trivial refinement which is a regular positroid subdivision) which refines $\mathcal{D}_P$. Such a finest subdivision exists as there are finitely many regular positroid subdivisions of $\Delta_{k+1,n}$.  We claim that each full-dimensional positroid 
polytope $\Gamma_{\mathcal{M'}}$ 
in $\mathcal{D}$ is a tile.
To see this, 
note that by \cite[Theorem 6.6]{SW21}, 
$\Gamma_{\mathcal{M'}}$
must be a connected {series-parallel} matroid. \cref{lem:tree} then implies that 
$\mathcal{M'}$
must come from a plabic graph which is a tree. 
Therefore by \cite[Proposition 3.16]{LPW}, 
$\Gamma_{\mathcal{M'}}$ is a tile. Now, restricting $\mathcal{D}$ to $\Gamma_{\mathcal{M}}$, we obtain a tiling of $\Gamma_{\mathcal{M}}$.

\end{proof}

\begin{corollary}\label{cor:positroidpolytopes}
Let $\Gamma_{\mathcal{M}} \subset \Delta_{k+1,n}$ be a 
full-dimensional positroid polytope.
Then $\Gamma_{\mathcal{M}}$ admits a positroid tiling,
and all positroid tilings 
of $\Gamma_{\mathcal{M}}$ 
have the same cardinality.
\end{corollary}

\begin{proof}
This follows from \cref{lem:contains}
and \cref{prop:samesize}.
\end{proof}
It would be interesting to address the following question.
\begin{question}
Is there an explicit combinatorial formula for the cardinality 
of a tiling of a full-dimensional positroid polytope 
 $\Gamma_{\mathcal{M}} \subset \Delta_{k+1,n}$? 
\end{question}

We can give analogues of some of the previous results for the amplituhedron. 

\begin{proposition}\label{prop:ampanalogue}
If $R$ is a full-dimensional subset of $\mathcal{A}_{n,k,2}(Z)$ which admits an all-$Z$ positroid tiling, then every all-$Z$ positroid tiling of $R$ has the same cardinality.  
\end{proposition}

\begin{proof}[Proof sketch]
    In analogy to \cref{def:wtPT}, define the weight function of a $w$-chamber $\cham{w}$ to be $\Omega(\cham{w})= \PT(w).$
Since $R \subset \Ank$ admits an all-$Z$ positroid tiling and each tile is a union of $w$-chambers, for any $Z$ $R$ is a union of $w$-chambers. Define the weight function of $R$ to be
\[\Omega(R)= \sum \Omega(\cham{w}) \]
where the sum is over chambers $\cham{w}$ such that for some $Z$, $\cham{w} \cap R$ is nonempty. Then, using \cref{rmk:wsimp-in-tile-iff-wcham-in-tile}, we may deduce an amplituhedron analogue of \cref{prop:weight_tile} on the weight of each tile. Finally, by an identical argument as in the hypersimplex case, we obtain the proposition statement.
\end{proof}

\cref{prop:ampanalogue} together with \cref{prop:tilings} allows us to deduce the Magic Number Conjecture for the $m=2$ amplituhedron.  
Alternatively, we can prove the Magic Number Conjecture using \cref{thm:Tduality} and \cref{thm:magic-number}.  
\begin{corollary}\label{cor:magic-number}
 Every all-$Z$ positroid tiling of $\mathcal{A}_{n,k,2}$ consists of $M_{n,k,2}= \binom{n-2}{k}$ tiles.   
\end{corollary}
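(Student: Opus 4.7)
The plan is to deduce this corollary as an immediate consequence of Theorem \ref{thm:magic-number} together with the T-duality theorem (\cref{thm:Tduality}) and the classification of positroid tiles for $\mathcal{A}_{n,k,2}$ in terms of bicolored subdivisions (\cref{thm:amp-tiles-facets-inequalities}).

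First I would start with an arbitrary all-$Z$ positroid tiling $\{Z_S\}_{S \in \mathcal{C}}$ of $\mathcal{A}_{n,k,2}$. By \cref{thm:amp-tiles-facets-inequalities}, every positroid tile of $\mathcal{A}_{n,k,2}(Z)$ is of the form $Z_\sigma$ for a unique bicolored subdivision $\sigma$ of type $(k,n)$, so the tiling is indexed by some collection $\mathcal{S}$ of such bicolored subdivisions with $|\mathcal{S}| = |\mathcal{C}|$. Next, invoking \cref{thm:Tduality} in the reverse direction, the corresponding collection $\{\Gamma_\sigma\}_{\sigma \in \mathcal{S}}$ is a positroid tiling of the hypersimplex $\Delta_{k+1,n}$. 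Then \cref{thm:magic-number} gives $|\mathcal{S}| = \binom{n-2}{k}$, hence $|\mathcal{C}| = \binom{n-2}{k}$.

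Finally, I would verify that the MacMahon number $M_{n,k,2}$ does indeed specialize to $\binom{n-2}{k}$. Setting $a=k$, $b=n-k-2$, and $c = m/2 = 1$ in the formula for $M(a,b,c)$, each inner product telescopes: for fixed $i$, we have $\prod_{j=1}^{b}\frac{i+j}{i+j-1} = \frac{i+b}{i}$, so $M(k,n-k-2,1) = \prod_{i=1}^{k}\frac{i+n-k-2}{i} = \binom{n-2}{k}$. This matches the cardinality obtained above and completes the proof.

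There is no real obstacle here: all the substantive work has been done in \cref{thm:magic-number} (the hypersimplex version) and in \cref{thm:Tduality} (the T-duality correspondence between positroid tilings of $\Delta_{k+1,n}$ and all-$Z$ positroid tilings of $\mathcal{A}_{n,k,2}$). The only thing to be careful about is that T-duality is a bijection at the level of \emph{collections} of bicolored subdivisions, so tile-counts are automatically preserved; and that one is applying the hypersimplex result to the tiling produced by T-duality rather than to a single fiber $\mathcal{A}_{n,k,2}(Z)$.
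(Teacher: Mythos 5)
Your proposal is correct and is essentially the paper's own argument: the corollary is obtained by indexing the tiles by bicolored subdivisions, applying T-duality (\cref{thm:Tduality}) to transfer the all-$Z$ tiling of $\mathcal{A}_{n,k,2}$ to a positroid tiling of $\Delta_{k+1,n}$, and then invoking \cref{thm:magic-number}. The extra check that $M(k,n-k-2,1)=\binom{n-2}{k}$ is a harmless (and correct) addition.
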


\begin{remark}
    Generalizing the notion of a tile for the amplituhedron,
we define a \emph{Grasstope} $Z_S:=\overline{\tilde{Z}(S)}$ to be (the closure of) the 
image of a positroid cell $S$ under the amplituhedron map $\tilde{Z}$. 
We expect that every 
full-dimensional Grasstope in 
$\mathcal{A}_{n,k,2}(Z)$ admits a tiling, but this does not follow from our results.
\end{remark}

\begin{corollary} \label{cor:Grasstopes}
	Let $Z_S$ be a $2k$-dimensional (full-dimensional) Grasstope in 
$\Ank$.
Then every all-$Z$ tiling of 
$Z_S$ has the same cardinality.  
\end{corollary}
\begin{proof}
Either $Z_S$ has no all-$Z$ tilings, in which case the statement is vacuously true, or this follows from \cref{prop:ampanalogue}.
\end{proof}

\section{Combinatorial results on tilings}\label{sec:combtiling}

In the last section, we showed that any tiling of $\Delta_{k+1, n}$ or $\A_{n, k, 2}$ consists of $\binom{n-2}{k}$ tiles. In this section, we prove another enumerative necessary condition for a collection of tiles to comprise  a tiling. The strategy is again to use Parke-Taylor functions, but now for facets of tiles. 

We start by defining a weight function for common facets of $w$-simplices. Recall 
the description of these facets 
from \cref{prop:w-simp-facet-sharing}.

\begin{notation}\label{not:facetweight}
Choose $w\in S_n$ and let $i$ and $j$ be letters located in adjacent positions in $w$.
Let $N_L:=[i+1,j-1]\cup\{\star\}$ and $N_R:=[j+1,i-1]\cup\{\star\}$ be subsets of $[n]/\sim$, where 
$\sim$ is the equivalence relation on $[n]$ obtained by identifying $i$ and $j$, and $\star:=i \sim j$.
 We define a total cyclic order 
 $C_L$ on the set $N_L$ by taking the subword of $w$ consisting of letters in $[i,j]$, and then identifying 
 $i \sim j \sim \star$.  Similarly, we define a total cyclic order $C_R$ on $N_R$ by taking the subword of $w$ consisting of letters in $[j,i]$,
 and then identifying $i \sim j \sim \star$.
We also define $(w_L)$ to be the $|N_L|$-cycle on $N_L$ such that $C_{L}=(w_L)$. We define $(w_R)$ analogously.
\end{notation}

\begin{example}\label{ex0:weight_facet_w}
Let $n=6$, $i=1$, and $j=4$.  
Let $w=514236$. 
We have $N_L=\{2,3,\star\}, N_R=\{5,6,\star\}$ and  $C_L=(\star \  2\ 3)=(w_L)$ and $ C_R=(5 \  \star \ 6)=(w_R)$.
\end{example}

In the next definition, we use the notation 
$\widehat{\Gr}_{2,N_L}$ and $\widehat{\Gr}_{2,N_R}$ to denote Grassmannians whose matrix representatives have columns labeled by $N_L$, respectively $N_R$, rather than $[n]$.

\begin{definition}\label{def:facetweight}
Let $\simp{w}^{(ij)}= \simp{u}^{(ji)}$ be a common facet of $\simp{w}$ and $\simp{u}$. Then we define the \emph{weight function} of $\simp{w}^{(ij)}$ to be the rational function on $\widehat{\Gr}_{2,N_L} \times \widehat{\Gr}_{2,N_R}$ given by
\begin{equation*} \Omega(\simp{w}^{(ij)}):=\PT(w_L)\PT(w_R) = \PT(u_L)\PT(u_R),
\end{equation*}
where we used \cref{not:facetweight}. 
\end{definition}
Because $\PT(u)$ depends only on the cycle $(u)$, $\PT(w_L)$ and $\PT(w_R)$ are well-defined.

\begin{example}\label{ex:weight_facet_w}
We continue \cref{ex0:weight_facet_w}.
Consider the facet $\Delta_{(w)}^{(14)}$ common to $\simp{w}$ and $\simp{u}$, where $w=514236$ and $u=541236$. We have $\simp{w} \subset \Gamma_\sigma$ where $\sdiv$ is as in \cref{ex:order}. Since $(w_L)=(\star \  2\ 3)$ and $ (w_R)=(5 \  \star \ 6)$,
\begin{equation*} \Omega(\simp{w}^{(14)})=\PT(w_L)\PT(w_R)=\frac{1}{P_{\star 2} P_{23} P_{3\star}}\frac{1}{P_{5\star} P_{\star 6} P_{65}}.   
\end{equation*}  
\end{example}

Recall from \cref{thm:hypersimplex-tiles-ineq-facets} that the tile $\htile{\sdiv}$ has a facet on the hyperplane $x_{[i,j-1]}=\area(i \rightarrow j)$ for each facet-defining arc $i \to j$. Denote this facet by $\htile{\sdiv}^{(ij)}$. Before defining the weight function for $\htile{\sdiv}^{(ij)}$, we characterize which $w$-simplices have a facet contained in $\htile{\sdiv}^{(ij)}$. Recall the notation $\Ext_{(ij)}(C_\sdiv)$ from \cref{def:Ext}.

\begin{lemma}\label{lem:omegafacet}
    Let $\sdiv$ be a bicolored subdivision and $i \to j$ an internal facet-defining arc. Then $\simp{w} \subset \htile{\sdiv}$ has a facet contained in $\htile{\sdiv}^{(ij)}$ if and only if $C_w \in \Ext_{(ij)}(C_\sdiv)$.
\end{lemma}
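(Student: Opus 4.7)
The plan is to match facet hyperplanes of $\simp{w}$ with the facet hyperplane of $\htile{\sdiv}^{(ij)}$. By \cref{cor:ineq-and-facet-w-simp}(2), the facet hyperplanes of $\simp{w}$ are precisely the $n$ hyperplanes of the form $x_{[a,b-1]}=|I_b\cap[a,b-1]|$, one for each pair $(a,b)$ such that $a$ immediately precedes $b$ in $(w)$. By \cref{thm:hypersimplex-tiles-ineq-facets}, the facet $\htile{\sdiv}^{(ij)}$ lies on the hyperplane $x_{[i,j-1]}=\area(i\to j)$. Since $\simp{w}$ has full dimension inside the affine hyperplane $x_{[n]}=k+1$, a facet of $\simp{w}$ is contained in $\htile{\sdiv}^{(ij)}$ if and only if these two hyperplanes coincide after restriction to $x_{[n]}=k+1$.

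First I would observe that, modulo the equation $x_{[n]}=k+1$, the equality $x_{[a,b-1]}=|I_b\cap[a,b-1]|$ agrees with $x_{[i,j-1]}=\area(i\to j)$ in exactly two ways: either (a) $[a,b-1]=[i,j-1]$ with $|I_b\cap[a,b-1]|=\area(i\to j)$, or (b) $[a,b-1]=[j,i-1]$ (the cyclic complement) with $|I_b\cap[a,b-1]|=k+1-\area(i\to j)$. In case (a), $a=i$ and $b=j$ immediately precede each other in $(w)$, which is exactly the statement $C_w\in\Ext_{(ij)}(C_\sdiv)$; moreover, since $i\to j$ is a compatible arc of $\sdiv$ (being facet-defining) and $\simp{w}\subset\htile{\sdiv}$, equation (3.1) from the proof of \cref{thm:when-w-simp-in-tile} gives $|I_j\cap[i,j-1]|=\area(i\to j)$ automatically. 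Thus case (a) is equivalent to $C_w\in\Ext_{(ij)}(C_\sdiv)$.

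The main task is then to rule out case (b). The key observation is that because $i\to j$ is an \emph{internal} facet-defining arc, the reverse arc $j\to i$ bounds the same black polygon (from the opposite side) and is therefore also compatible with $\sdiv$. The areas on the two sides of the chord partition the $k$ black triangles, so $\area(i\to j)+\area(j\to i)=k$. Applying equation (3.1) to the compatible arc $j\to i$ yields $|I_i\cap[j,i-1]|=\area(j\to i)=k-\area(i\to j)$, whereas case (b) would require this quantity to equal $k+1-\area(i\to j)$, a contradiction. This rules out case (b) and completes the equivalence.

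The only non-routine step is ruling out the complementary case (b); the rest is a direct comparison of the two hyperplane descriptions. Once one notices that both $i\to j$ and $j\to i$ are compatible with $\sdiv$ (using that $i\to j$ is internal), the area identity makes the contradiction immediate.
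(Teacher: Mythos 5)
Your proof is correct, and while it shares the paper's basic skeleton (use \cref{cor:ineq-and-facet-w-simp} to see that the only facet hyperplanes of $\simp{w}$ that can carry the facet are those indexed by an adjacent pair $(i,j)$ or $(j,i)$, then settle which), the way you settle the two cases is genuinely different. The paper rules out the ordering $(\cdots j\ i\cdots)$ in the forward direction using the tile's facet inequality $x_{[i,j-1]}\geq \area(i\to j)$, and in the converse direction pins down the constant $c\in\{\area(i\to j),\area(i\to j)+1\}$ by integrality together with the observation that $x_{[i,j-1]}=\area(i\to j)+1$ is a bounding but non-facet hyperplane of $\htile{\sdiv}$, hence meets it in codimension at least $2$. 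You instead lean on \eqref{eq:area-equals-intersection}, which (as the proof of \cref{thm:when-w-simp-in-tile} shows) holds for every compatible arc once $\simp{w}\subset\htile{\sdiv}$: applied to $i\to j$ it gives the constant match $|I_j\cap[i,j-1]|=\area(i\to j)$ at once, and applied to the reverse arc $j\to i$ (compatible, since the internal chord is an edge of a polygon of $\sdiv$) together with $\area(i\to j)+\area(j\to i)=k$ it excludes the complementary-interval case by a one-unit mismatch with the required constant $k+1-\area(i\to j)$. This is a clean arithmetic substitute for the paper's geometric codimension argument, at the cost of quoting an equation from inside another proof rather than a stated result. Two steps you leave implicit are fine but worth a line: $C_w\in\Ext(C_\sdiv)$ (needed to upgrade ``$i$ immediately precedes $j$'' to $C_w\in\Ext_{(ij)}(C_\sdiv)$) follows from $\simp{w}\subset\htile{\sdiv}$ by \cref{cor:w-simp-in-tile-cyclic-order-version}, and $\htile{\sdiv}^{(ij)}=\htile{\sdiv}\cap\{x_{[i,j-1]}=\area(i\to j)\}$, which is what makes ``lies on the hyperplane'' equivalent to ``is contained in the facet'' for a facet of a simplex inside the tile.
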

\begin{proof}

    Suppose $\simp{w} \subset \htile{\sdiv}$ has a facet on the hyperplane $x_{[i,j-1]}=\area(i \to j)$. This implies by \cref{cor:ineq-and-facet-w-simp} that $i,j$ are adjacent in $C_w$, so we have either $C_w= (\cdots i\ j \cdots)$ or $C_w=(\cdots j\ i \cdots)$. In the former case, \cref{cor:ineq-and-facet-w-simp} implies $\simp{w}$ has a facet inequality $x_{[i,j-1]} \geq \area(i \to j)$; in the latter, $\simp{w}$ has a facet inequality of the form $ x_{[i, j-1]} \leq \area(i \to j)$. Since $\simp{w}$ satisfies the facet inequalities of $\htile{\sdiv}$, we have $x_{[i,j-1]} \geq \area(i \to j)$ and $C_w= (\cdots i\ j \cdots)$ as desired. 

    Suppose $\simp{w} \subset \htile{\sdiv}$ and $C_w= (\cdots i\ j \cdots)$. Then $\simp{w}$ satisfies the inequalities 
    \[\area(i \to j) \leq x_{[i, j-1]} \leq \area(i \to j)+1\]
    which hold for all points in $\htile{\sdiv}$, and also has a facet on a hyperplane $x_{[i,j-1]}=c$ by \cref{cor:ineq-and-facet-w-simp}. Since $c$ is an integer, it is either equal to $\area(i \to j)$ or $\area(i \to j) +1.$ The hyperplane $x_{[i, j-1]} = \area(i \to j)+1$ intersects the tile $\htile{\sdiv}$ in codimension at least 2, as it is a bounding hyperplane but not a facet hyperplane. Therefore, $c= \area(i \to j)$ and $\simp{w}$ has a facet on the desired hyperplane.
\end{proof}

\begin{definition}
Let $\sdiv$ be a bicolored subdivision and $i \to j$ an internal facet-defining arc. Let $\htile{\sdiv}^{(ij)}$ denote the facet of $\htile{\sdiv}$ on the hyperplane $x_{[i,j-1]}=\area(i \rightarrow j)$. Then we define the weight function of the facet $\htile{\sdiv}^{(ij)}$ as the rational function on $\widehat{\Gr}_{2,N_L} \times \widehat{\Gr}_{2,N_R}$ given by the sum 
\begin{equation*}
\Omega(\htile{\sdiv}^{(ij)}):=\sum_{\simp{w}^{(ij)} \subset \htile{\sdiv}^{(ij)}} \Omega(\simp{w}^{(ij)})=\sum_{(w) \in \Ext_{(ij)}(C_\sdiv)} \Omega(\simp{w}^{(ij)})
\end{equation*}
where the equality follows from \cref{lem:omegafacet}.
\end{definition}

We now show the facet analogue of \cref{prop:weight_tile}.

\begin{notation}
    Using \cref{not:facetweight}, we define $\shuff:= \binom{n-2}{|N_L|-1}= \binom{n-2}{|N_R|-1}.$ We also denote by $\mathbf{I}_{N_L},\mathbf{I}_{N_R}$ the identity permutations on $N_L,N_R$ respectively.
\end{notation}

\begin{proposition}\label{prop:weight_facet}
Let $\sdiv$ be a bicolored subdivision and $i \rightarrow j$ an internal facet-defining arc. Let $\htile{\sdiv}$ be the corresponding tile for $\Delta_{k+1,n}$. Then
\begin{equation*}
    \Omega(\htile{\sdiv}^{(ij)})=(-1)^{k-1}\shuff\PT(\mathbf{I}_{N_L})\PT(\mathbf{I}_{N_R}).
\end{equation*}
In particular, the weight function $\Omega(\htile{\sdiv}^{(ij)})$ does not depend on $\sdiv$.
\end{proposition}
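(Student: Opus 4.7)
My plan is to reduce the statement to \cref{prop:weight_tile} applied on each side of the chord $ij$, absorbing into the shuffle count $\shuff$ the freedom to interleave the two resulting cyclic orders. Since $i \to j$ is an internal facet-defining arc, it is an edge of a black polygon $P_L$ of $\sdiv$ lying on its left and of a white polygon $P_R$ of $\sdiv$ lying on its right, and the chord $ij$ partitions the $n$-gon into two sub-polygons with vertex sets $[i,j]$ and $[j,i]$. The key observation is that every polygon of $\sdiv$ lies entirely in one of these two sides, so the partial cyclic order decomposes as $C_\sdiv = C_\sdiv|_{[i,j]} \cup C_\sdiv|_{[j,i]}$.

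First I will produce collapsed bicolored subdivisions $\sdiv_L$ of an $|N_L|$-gon and $\sdiv_R$ of an $|N_R|$-gon by identifying the edge $ij$ to a single vertex $\star$ on each side. A direct count shows that $\sdiv_L$ has $k_L = \area(i \to j) - 1$ black triangles (since $P_L$'s triangle count drops by exactly one under the collapse) while $\sdiv_R$ has $k_R = k - \area(i \to j)$, so $k_L + k_R = k - 1$.

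Next I will define the map
\[
\phi : \Ext_{(ij)}(C_\sdiv) \longrightarrow \Ext(C_{\sdiv_L}) \times \Ext(C_{\sdiv_R}), \qquad (w) \mapsto ((w_L),(w_R)),
\]
and show it is surjective with every fibre of size exactly $\shuff$. Given $(u_L, u_R)$ in the target, I un-collapse $\star$ back to $i$ immediately before $j$ to obtain cyclic orders $\tilde u_L$ on $[i,j]$ and $\tilde u_R$ on $[j,i]$; then every preimage $(w) \in \phi^{-1}(u_L,u_R)$ is recovered by shuffling the linear sequence of $[i+1,j-1]$ prescribed by $\tilde u_L$ with the linear sequence of $[j+1,i-1]$ prescribed by $\tilde u_R$ (after the fixed initial adjacency $i,\ j$), yielding exactly $\binom{n-2}{|N_L|-1} = \shuff$ choices. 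The decomposition of $C_\sdiv$ above guarantees that every such shuffle lies in $\Ext_{(ij)}(C_\sdiv)$. Since $\PT(w_L)\PT(w_R)$ depends only on $\phi(w)$, this factorization yields
\[
\Omega(\htile{\sdiv}^{(ij)}) \;=\; \shuff \cdot \Omega(\htile{\sdiv_L}) \cdot \Omega(\htile{\sdiv_R}),
\]
and applying \cref{prop:weight_tile} to each factor, together with $k_L + k_R = k - 1$, produces the claimed formula.

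The step I expect to be most delicate is the construction of $\sdiv_L$ (and $\sdiv_R$) when $P_L$ (or $P_R$) happens to be a triangle. The naive collapse then creates a degenerate $2$-gon which must be discarded, forcing its two same-colored neighbors to merge into a single polygon; one must check that this merging does not alter $\Ext(C_{\sdiv_L})$. The chain of the merged polygon is automatically implied by the two original chains (since they share the edge emerging from the collapsed vertex), so \cref{prop:weight_tile} applies to the resulting honest bicolored subdivisions as written.
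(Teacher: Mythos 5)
Your proposal follows essentially the same route as the paper's proof: contract the edge $(i,j)$ to form $\sdiv_L,\sdiv_R$, show that the map $\Ext_{(ij)}(C_\sdiv)\to\Ext(C_{\sdiv_L})\times\Ext(C_{\sdiv_R})$, $(w)\mapsto((w_L),(w_R))$, is surjective with every fiber of size $\shuff$ via shuffles, deduce the factorization $\Omega(\htile{\sdiv}^{(ij)})=\shuff\,\Omega(\htile{\sdiv_L})\,\Omega(\htile{\sdiv_R})$, and finish with \cref{prop:weight_tile} together with $k_L+k_R=k-1$. Your additional treatment of the degenerate case where the polygon adjacent to $(i,j)$ is a triangle (discarding the collapsed bigon and merging the two like-colored neighbors, checking this does not change the extension set) is a correct refinement of a point the paper's proof passes over without comment.
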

\begin{proof} 

By the definition of $\Omega(\htile{\sdiv}^{(ij)})$ and $\Omega(\simp{w}^{(ij)})$, we have 
\[\Omega(\htile{\sdiv}^{(ij)}) = \sum_{(w) \in \Ext_{(ij)}(C_\sdiv)} \PT(w_L) \PT(w_R).\]
As the first step in our argument, we would like to rewrite this as a product of weight functions for tiles with smaller $k$ and $n$.

Let $\sdiv_L$ (resp. $\sdiv_R$) be the bicolored subdivision on a polygon with vertices $N_L$ (resp. $N_R$) obtained from $\sdiv$ by taking all polygons to the left (resp. to the right) of the arc $i \rightarrow j$ and then contracting the edge $(i,j)$ to a vertex $\star$, see \cref{fig:fact_gen}. We write $k_L$ (resp. $k_R$) for the area of $\sdiv_L$ (resp. $\sdiv_R$).

We will show that 
\begin{equation} \label{eq:facet-weight-fcn-factors}
    \Omega(\htile{\sdiv}^{(ij)})= \shuff \, \Omega(\Gamma_{\sdiv_L}) \,  \Omega(\Gamma_{\sdiv_R}).
\end{equation}

The terms of the left hand side are labeled by $(w) \in \Ext_{(ij)}(C_\sdiv)$, while terms on the right hand side are labeled by elements of $\Ext(C_{\sdiv_L})\times \Ext(C_{\sdiv_R})$.
 We define a map 
 $$\psi :\Ext_{(ij)}(C_\sdiv) \to \Ext(C_{\sdiv_L})\times \Ext(C_{\sdiv_R})$$ which sends $C_{w}$ to $(C_{w_L},C_{w_R})$. 
 This map is well-defined and surjective by \cref{thm:when-w-simp-in-tile}. Each pair $(C_{u}, C_{v})$ in the image has exactly $\shuff=\binom{n-2}{|N_L|-1}$ pre-images. Indeed, if $C_{u}=(u_1 \cdots u_p\ \star)$ and $C_{v}=(v_1 \cdots v_q\ \star)$, then any shuffle $S$ of the tuples $(u_1, \dots, u_p)$ and $(v_1, \dots, v_q)$ will give rise to a circular order $C_w= (S\ i\ j)$ which is a pre-image of $(C_u, C_v)$. Moreover, all preimages of $(C_u, C_v)$ arise in this way.

Using this, we rewrite
\begin{equation*}
\Omega(\htile{\sdiv}^{(ij)})=  \sum_{(w) \in \Ext_{(ij)}(C_\sdiv)}\PT(w_L)\PT(w_R)=\shuff \sum_{(C_u,C_v)} \PT(u)\PT(v)
\end{equation*}
where the second sum is over $\Ext(C_{\sdiv_L})\times\Ext(C_{\sdiv_R})$.
The second sum can also be written as 
\[\shuff \left(\sum_{(u) \in \Ext(C_{\sdiv_L})} \PT(u) \right)\left(\sum_{(v)\in \Ext(C_{\sdiv_R})} \PT(v) \right) = \shuff \Omega(\htile{\sdiv_L}) \Omega(\htile{\sdiv_R})\]
where the equality uses \cref{cor:w-simp-in-tile-cyclic-order-version}. This shows that \eqref{eq:facet-weight-fcn-factors} holds. 

Now we apply \cref{prop:weight_tile} to \eqref{eq:facet-weight-fcn-factors} to obtain
\[\Omega(\htile{\sdiv}^{(ij)})= \shuff \, (-1)^{k_L+ k_R} \PT(\mathbf{I}_{N_L}) \PT(\mathbf{I}_{N_R}).\]

Now, $k_L+k_R=k-1$ because contracting $i \rightarrow j$ removes the triangles on either side of $i \to j$ in $\sdiv$, and one of these triangles is black and the other is white. This completes the proof of the proposition.
\end{proof}
\begin{remark}

 \cref{eq:facet-weight-fcn-factors} exposes the `factorization' properties of the Parke-Taylor functions in connection with the geometry of tiles, see also \cref{fig:fact_gen}.
\end{remark}

\begin{figure}[h]
\includegraphics[height=1.6in]{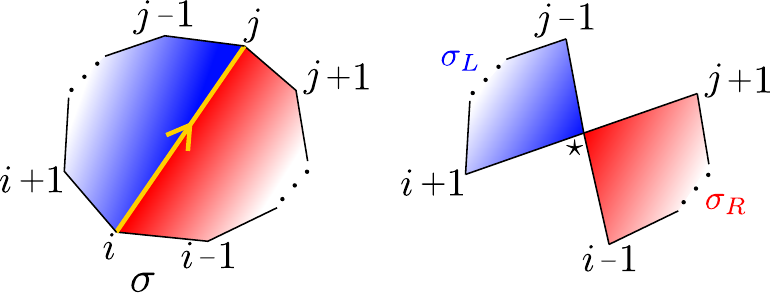}
        \caption{Left: a bicolored subdivision $\sdiv$ with the facet-defining arc $i \rightarrow j$ highlighted. Right: the bicolored subdivisions $\sigma_L$ on $N_L=[i+1,j-1]\cup\{\star\}$ and $\sigma_R$ on $N_R=[j+1,i-1]\cup\{\star\}$ obtained from $\sigma$ by contracting the arc $i \rightarrow j$ and identifying $i$ and $j$ with $\star$. The blue and red colors highlight the respective regions before and after the contraction.}
        \label{fig:fact_gen}
\end{figure}

\begin{example}
    Let $\sigma$ be the bicolored subdivision from \cref{ex:order}. Recall that there are $16$ $w$-simplices contained in $\Gamma_\sigma$, i.e. $|\Ext(C_\sigma)|=16$. The $w$-simplices that have $C_w=(\cdots 1\ 4 \cdots)$ are the following:
    \begin{center}
    \begin{tabular}{|c|c|c|c|c|c|c|}\hline
        \multicolumn{7}{|c|}{$\Ext_{(14)}(C_\sigma)$ } \\ \hline
       $w$ & $5\mathbf{14}236$ & $35\mathbf{14}26$ & $53\mathbf{14}26$ & $235\mathbf{14}6$ & $253\mathbf{14}6$ & $523\mathbf{14}6$ \\ \hline
    \end{tabular}
\end{center}
We have $N_L=\{2,3,\star\}, N_R=\{5,6,\star\}$. 
By \cref{lem:omegafacet}, to compute $\Omega(\Gamma^{(14)}_{\sigma})$, we sum over the weight functions of the $w$-simplices with $(w) \in \Ext_{(14)}(C_\sigma)$. 
Performing the sum gives:
\begin{equation*}
\Omega(\Gamma^{(14)}_{\sigma})=\sum_{(w) \in \Ext_{(14)}(C_\sigma)} \Omega(\simp{w}^{(14)})= 6 \frac{1}{P_{23} P_{3\star} P_{\star 2}}\frac{1}{P_{56 }P_{6 \star} P_{\star 5} }={4 \choose 2} \PT(2,3,\star) \PT(5,6,\star),   
\end{equation*}
which agrees with \cref{prop:weight_facet}, where ${n-2 \choose |N_L|-1}={4 \choose 2}=6$ is the number of shuffles between the sets $\{2,3\},\{5,6\}$. Moreover, $\PT(2,3,\star)=\Omega(\Gamma_{\sigma_L})$ and $\PT(5,6,\star)=\Omega(\Gamma_{\sigma_R})$, where $\sigma_L,\sigma_R$ are as in \cref{fig:ex_fact}.
\end{example}

\begin{figure}[h]
\includegraphics[height=1.0in]{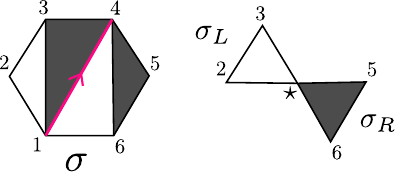}
        \caption{Left: a bicolored subdivision $\sdiv$ with the facet-defining arc $1 \rightarrow 4$ highlighted. Right: the bicolored subdivisions $\sigma_L,\sigma_R$ obtained from $\sigma$ by contracting the arc $1 \rightarrow 4$ and identifying $1$ and $4$ with $\star$.  }
        \label{fig:ex_fact}
\end{figure}

Using \cref{prop:weight_facet} and a similar strategy to the proof of \cref{thm:magic-number}, we can prove the following necessary condition for a collection of tiles to give a tiling.

\begin{theorem}\label{lem:tilesside}
 Let $\mathcal{T}=\{\htile{\sdiv}\}_{\sdiv \in \mathcal{S}}$ be a tiling for $\Delta_{k+1,n}$ and $c \in [1,k]$.
 Let $i \to j$ be an internal arc of $\mathbf{P}_n$. Then
 the number of tiles in $\mathcal{T}$ that have $x_{[i,j-1]}\geq c$ as a facet inequality equals the number of tiles that have $x_{[i,j-1]}\leq c$ as a facet inequality.
 Equivalently, we have 
 \small{ \[\# \{\sdiv \in \mathcal{S}: i \to j \text{ facet-defining, }\area(i \to j)=c\}=\# \{\sdiv \in \mathcal{S}: j \to i \text{ facet-defining, }\area(i \to j)=c-1\}. \]
 }
\end{theorem}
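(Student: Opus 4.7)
The plan is to promote \cref{prop:weight_facet}---which says that the facet weight function $\Omega(\htile{\sigma}^{(ij)})$ depends only on $n,k$ and the arc $i\to j$, not on $\sigma$---into the desired counting identity by summing facet weight functions over each of the two collections of tiles and matching the sums. Set $H := \{x \in \R^n : x_{[i,j-1]} = c\}$, and let $\mathcal{S}^+, \mathcal{S}^- \subseteq \mathcal{S}$ denote the two sets of tiles to be equinumerous: those $\sigma$ with $i\to j$ facet-defining of area $c$, resp.\ those with $j\to i$ facet-defining of area $k+1-c$. Define
\[\Omega^+ := \sum_{\sigma \in \mathcal{S}^+}\Omega(\htile{\sigma}^{(ij)}), \qquad \Omega^- := \sum_{\sigma \in \mathcal{S}^-}\Omega(\htile{\sigma}^{(ji)}).\]
By \cref{prop:weight_facet}, and because the sets $N_L, N_R$ attached to $j\to i$ are those of $i\to j$ with their roles swapped, both sides equal a common nonzero rational function $\Omega_0 := (-1)^{k-1}\shuff\,\PT(\mathbf{I}_{N_L})\PT(\mathbf{I}_{N_R})$ times $|\mathcal{S}^+|$ and $|\mathcal{S}^-|$ respectively. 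So it suffices to prove $\Omega^+ = \Omega^-$.

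The key step is to re-express each sum as a single sum over the set $\mathcal{F}$ of $w$-simplex facets $F \subset H$ whose two flanking $w$-simplices lie in \emph{distinct} tiles of $\mathcal{T}$. A quick dimension count (using that $i\to j$ is internal and $c \in [1,k]$) shows that any $w$-simplex facet on $H$ has its relative interior inside the interior of $\Delta_{k+1,n}$, so it is the common facet of exactly two $w$-simplices $\simp{w}, \simp{w'}$---related by swapping $i$ and $j$, per \cref{prop:w-simp-facet-sharing}---sitting on the two sides of $H$. Since each $w$-simplex lies in a unique tile of $\mathcal{T}$, each such $F$ determines (possibly equal) tiles $\sigma^+(F), \sigma^-(F)$, and $\mathcal{F}$ collects those $F$ where they differ. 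By \cref{lem:omegafacet}, $\Omega^+$ equals the sum of $\Omega(F)$ over all $F$ with $\sigma^+(F) \in \mathcal{S}^+$; this is contained in $\mathcal{F}$, because the facet inequality $x_{[i,j-1]} \geq c$ of $\htile{\sigma^+(F)}$ excludes $\simp{w'}$ from that tile. A symmetric statement holds for $\Omega^-$, so the argument reduces to the reverse inclusions
\[\mathcal{F} \subseteq \{F : \sigma^+(F) \in \mathcal{S}^+\} \quad\text{and}\quad \mathcal{F} \subseteq \{F : \sigma^-(F) \in \mathcal{S}^-\}.\]

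The main obstacle is these reverse inclusions, and I expect a short polyhedral argument; some care is needed because the paper's tilings are only dissections, so tiles need not meet along common facets. For the first inclusion, suppose $F \in \mathcal{F}$ and set $\sigma = \sigma^+(F)$. If the relative interior of $F$ lay in the relative interior of $\htile{\sigma}$, then a small neighborhood of a generic point of $F$ in the ambient hyperplane $x_{[n]}=k+1$ would lie in the interior of $\htile{\sigma}$; its $-$ side portion lies in the interior of $\simp{w'}$, which by unique containment of $w$-simplices in tiles would force $\simp{w'} \subset \htile{\sigma}$, hence $\sigma^-(F)=\sigma$, contradicting $F \in \mathcal{F}$. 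So the relative interior of $F$ sits on $\partial\htile{\sigma}$; being $(n-2)$-dimensional, it spans the unique facet of $\htile{\sigma}$ lying on $H$. Because $\simp{w}$ sits on the $+$ side inside $\htile{\sigma}$, the associated facet inequality must be $x_{[i,j-1]} \geq c$, and \cref{thm:hypersimplex-tiles-ineq-facets} identifies this with $i\to j$ being facet-defining for $\sigma$ of area $c$, so $\sigma \in \mathcal{S}^+$. The other inclusion is analogous. Hence $\Omega^+ = \Omega^- = \sum_{F \in \mathcal{F}}\Omega(F)$, and canceling the nonzero $\Omega_0$ gives $|\mathcal{S}^+| = |\mathcal{S}^-|$.
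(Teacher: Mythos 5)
Your proposal is correct and follows essentially the same route as the paper: both arguments sum the facet weight functions $\Omega(\htile{\sdiv}^{(ij)})$ over the two families of tiles, match the two sums by identifying which $w$-simplex facets on the hyperplane $x_{[i,j-1]}=c$ occur in each (using the facet description of tiles from \cref{thm:hypersimplex-tiles-ineq-facets}, \cref{lem:omegafacet}, \cref{prop:w-simp-facet-sharing}, and disjointness of tile interiors), and then cancel the common nonzero value supplied by \cref{prop:weight_facet}. Your intermediate set $\mathcal{F}$ of facets whose flanking $w$-simplices lie in distinct tiles is just a repackaging of the paper's ``contained in a $\mathcal{T}_A$-facet iff contained in a $\mathcal{T}_B$-facet'' step, so the two proofs are the same in substance.
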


\begin{proof}

Let $H$ denote the hyperplane $\{x_{[i,j-1]}= c\}$ and let $\mathcal{T}_A$, resp. $\mathcal{T}_B$, be the set of tiles in $\mathcal{T}$ with $x_{[i,j-1]}\geq c$, resp. $x_{[i,j-1]}\leq c$, as facet inequality (tiles in $\mathcal{T}_A$ are {above} $H$, while those in $\mathcal{T}_B$ are below). By \cref{thm:hypersimplex-tiles-ineq-facets}, a tile $\htile{\sdiv} \in \mathcal{T}$ is in $\mathcal{T}_A$ precisely when $i \to j$ is a facet-defining arc and $c=\area_\sdiv(i \to j)$; using \cref{thm:hypersimplex-tiles-ineq-facets} and the fact that $x_{[n]}=k+1$ for all points in the hypersimplex, a tile $\htile{\sdiv}$ is in  $\mathcal{T}_B$ precisely when $j \to i$ is a facet-defining arc and $c=\area_\sdiv(i \to j)+1$. This shows that the two statements in the theorem are equivalent.

We would like to show that $|\mathcal{T}_A|= |\mathcal{T}_B|$. Note that if $\htile{\sdiv} \in \mathcal{T}_A$, then the facet $\htile{\sdiv}^{(ij)}$ lies on $H$; if instead $\htile{\sdiv'} \in\mathcal{T}_B$, then the facet $\htile{\sdiv'}^{(ji)}$ lies on $H$.

Consider a facet $\simp{w}^{(ij)}$ of a $w$-simplex, and suppose this facet is contained in the facet $\htile{\sdiv}^{(ij)}$ of a tile $\htile{\sdiv} \in \mathcal{T}_A$. This means $\simp{w}^{(ij)}$ is also contained in $H$. Because $i$ precedes $j$ in $w$, \cref{cor:ineq-and-facet-w-simp} implies $\simp{w}$ in fact lies above $H$. The $w$-simplex $\simp{u}$ which shares the facet $\simp{u}^{(ji)}=\simp{w}^{(ij)}$ with $\simp{w}$ must lie below $H$ and must be contained in a tile $\htile{\sdiv'}$ in the tiling. In particular, $\htile{\sdiv'}$ must be in $\mathcal{T}_B$: $\htile{\sdiv'}$ cannot intersect $\simp{w}$ in its interior, so since $\htile{\sdiv'}$ contains $\simp{u}$, $\htile{\sdiv'}$ must have a facet on $H$ and also be below $H$. The argument exchanging $A$ and $B$ is identical. In summary, $\simp{w}^{(ij)}=\simp{u}^{(ji)}$ is contained in $\htile{\sdiv}^{(ij)}$ for a tile $\htile{\sdiv} \in \mathcal{T}_A$ if and only if it is also contained in $\htile{\sdiv'}^{(ji)}$ for a tile $\htile{\sdiv'} \in \mathcal{T}_B$

This implies
 \begin{equation*}
\sum_{\substack{\simp{w}^{(ij)} \subset \htile{\sdiv}^{(ij)}\\ \text{and } \htile{\sdiv} \in \mathcal{T}_A}} \Omega(\simp{w}^{(ij)})=\sum_{\substack{\simp{u}^{(ji)} \subset \Gamma^{(ji)}_{\sdiv'}\\ \text{and } \Gamma_{\sdiv'} \in \mathcal{T}_B}} \Omega(\simp{u}^{(ji)}).
 \end{equation*}

 The left hand side can be expressed as
 \begin{equation*}
 \sum_{\htile{\sdiv} \in \mathcal{T}_A}\sum_{\simp{w}^{(ij)} \subset \htile{\sdiv}^{(ij)}} \Omega(\simp{w}^{(ij)})=\sum_{\htile{\sdiv} \in \mathcal{T}_A} \Omega(\htile{\sdiv}^{(ij)})=|\mathcal{T}_A| \cdot (-1)^{k-1}\shuff\PT(\mathbf{I}_{N_L})\PT(\mathbf{I}_{N_R})
 \end{equation*}
 where the final equality uses \cref{prop:weight_facet}.
 Analogously, the right hand side is
 \begin{equation*}
 \sum_{\Gamma_{\sdiv'} \in \mathcal{T}_B} \Omega(\Gamma^{(ji)}_{\sdiv'})=|\mathcal{T}_B| \cdot (-1)^{k-1}\shuff\PT(\mathbf{I}_{N_L})\PT(\mathbf{I}_{N_R})
 \end{equation*}
 again using \cref{prop:weight_facet}. (For the tiles in $\mathcal{T}_B$, $j \to i$ is a facet-defining arc rather than $i \to j$, but the expression in \cref{prop:weight_facet} is symmetric with respect to $i$ and $j$, so we obtain the same expression.)
 This implies $|\mathcal{T}_A|=|\mathcal{T}_B|$.
 \end{proof}

\begin{example}
Consider the collection $\mathcal{S}$ of bicolored subdivisions in \cref{fig:tiling}. There are two bicolored subdivisions, \textcircled{\raisebox{-0.9pt}{2}} and \textcircled{\raisebox{-0.9pt}{5}}
, that have $4 \rightarrow 7$ as facet-defining arc and $\area(4 \to 7)=2$. Correspondingly, \textcircled{\raisebox{-0.9pt}{3}} and \textcircled{\raisebox{-0.9pt}{4}}
 have $7 \rightarrow 4$ as facet-defining arc and $\area(4 \to 7)=2-1=1$. There is one bicolored subdivision, \textcircled{\raisebox{-0.9pt}{7}}
, that has $4 \rightarrow 7$ as facet-defining arc and $\area(4 \to 7)=1$. Correspondingly, there is one bicolored subdivision, \textcircled{\raisebox{-0.9pt}{6}}
, that has $7 \rightarrow 4$ as facet-defining arc and $\area(4 \to 7)=1-1=0$. This verifies $\mathcal{S}$ satisfies \cref{lem:tilesside} for the arc $4 \to 7$.  
\end{example}

\begin{corollary}\label{cor:necessary}
 Suppose that a collection $\mathcal{S}$ of bicolored subdivisions indexes a positroid tiling of $\Delta_{k+1, n}$
 (or an all-$Z$ tiling of $\mathcal{A}_{n,k,2})$. Then for every diagonal $(i,j)$ of the $n$-gon, 
 \[\#\{\sdiv \in \mathcal{S}: i \to j \text{ is a facet-defining arc}\}=\#\{\sdiv \in \mathcal{S}: j \to i \text{ is a facet-defining arc}\}. \]
\end{corollary}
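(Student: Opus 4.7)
The plan is to deduce this corollary directly from \cref{lem:tilesside} by summing its refined equality over all values of $c$; the only work is a bookkeeping check that the two summations cover exactly the sets we want.

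Fix a diagonal $(i,j)$ of the $n$-gon, and apply \cref{lem:tilesside} for each $c \in [1,k]$ to obtain
\[
\#\{\sdiv \in \mathcal{S}: i \to j \text{ facet-defining}, \area(i\to j)=c\}=\#\{\sdiv \in \mathcal{S}: j \to i \text{ facet-defining}, \area(i\to j)=c-1\}.
\]
Then sum over $c=1,\dots,k$. On the left, whenever $i \to j$ is a facet-defining arc of some $\sdiv \in \mathcal{S}$, it bounds a black polygon on its left, so $\area(i \to j) \geq 1$; and since a type $(k,n)$ bicolored subdivision has only $k$ black triangles in total, $\area(i \to j) \leq k$. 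Hence each such $\sdiv$ is counted exactly once by the sum, and the summed left-hand side equals $\#\{\sdiv \in \mathcal{S}: i \to j \text{ facet-defining}\}$.

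On the right, after reindexing $c'=c-1 \in [0,k-1]$, the same reasoning applies symmetrically: whenever $j \to i$ is facet-defining in $\sdiv$, $\area(j \to i) \in [1,k]$, so by $\area(i \to j) + \area(j \to i) = k$ we get $\area(i \to j) \in [0,k-1]$. Thus each such $\sdiv$ is counted exactly once, and the summed right-hand side equals $\#\{\sdiv \in \mathcal{S}: j \to i \text{ facet-defining}\}$. Comparing the two totals yields the corollary for the hypersimplex, and the amplituhedron statement then follows via T-duality (\cref{thm:Tduality}), since the bijection with bicolored subdivisions is the same and facet-defining arcs depend only on $\sdiv$, not on whether we interpret it as indexing a tile of $\Delta_{k+1,n}$ or of $\mathcal{A}_{n,k,2}$. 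I expect no substantive obstacle: all of the real content lives inside \cref{lem:tilesside}, and the corollary is a straightforward telescoping summation.
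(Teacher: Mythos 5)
Your proof is correct and is essentially the paper's own argument: both sum the refined equality of \cref{lem:tilesside} over $c \in [1,k]$ and identify the two resulting totals with the unrefined counts. The only difference is that you spell out the bookkeeping (that $\area(i\to j)\in[1,k]$ when $i\to j$ is facet-defining, and $\area(i\to j)\in[0,k-1]$ when $j\to i$ is) and the T-duality step, which the paper leaves implicit.
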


\begin{proof}
Fix a diagonal $(i,j)$ of the $n$-gon. Let $\mathcal{S}_A^{(r)}$ (resp. $\mathcal{S}_B^{(r)}$) be the subdivisions in $\mathcal{S}$ where $i \to j$ (resp. $j \to i$) is a facet-defining arc and $\area(i \to j)=r$ (resp. $\area(i \to j)=r$). By \cref{lem:tilesside}, we have $|\mathcal{S}_A^{(r)}|=|\mathcal{S}_B^{(r-1)}|$. The number of subdivisions in $\mathcal{S}$ with $i \to j$ as a facet-defining arc is 
\[\sum_{r=1}^k |\mathcal{S}_A^{(r)}| = \sum_{r=0}^{k-1}|\mathcal{S}_B^{(r)}|.\]
The right hand side is the number of $\sdiv \in \mathcal{S}$ with $j \to i$ as a facet-defining arc.

\begin{example}
Consider the collection $\mathcal{S}$ of bicolored subdivisions in \cref{fig:tiling}. There are $3$ bicolored subdivisions ($\textcircled{\raisebox{-0.9pt}{2}},\textcircled{\raisebox{-0.9pt}{5}},\textcircled{\raisebox{-0.9pt}{7}}
$) that have $4 \rightarrow 7$ as facet-defining arc. Correspondingly, there are $3$ bicolored subdivisions ($\textcircled{\raisebox{-0.9pt}{3}},\textcircled{\raisebox{-0.9pt}{4}}
,\textcircled{\raisebox{-0.9pt}{6}}$) that have $7 \rightarrow 4$ as facet-defining arc. Hence, $\mathcal{S}$ satisfies \cref{cor:necessary}.  
\end{example}
\end{proof}

Using \cref{cor:necessary}, we obtain another combinatorial necessary condition for a collection of bicolored subdivisions to index a tiling.

\begin{proposition}\label{prop:covering-ngon-necessary}
Suppose that a collection $\mathcal{S}$ of bicolored subdivisions indexes a positroid tiling of $\Delta_{k+1, n}$ (or an 
all-$Z$ tiling of $\mathcal{A}_{n,k,2}$).
     Superimposing all bicolored subdivisions in $\mathcal{S}$, we obtain a $\binom{n-3}{k-1}$-fold covering of the $n$-gon by black polygons. 
\end{proposition}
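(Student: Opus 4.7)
The plan is to show that the claimed covering multiplicity is constant on a fine common refinement, via the ``balanced facet'' condition \cref{cor:necessary}, and then to identify the constant by a global area count using the Magic Number Theorem \cref{thm:magic-number}.

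First, I would let $\mathcal{T}^*$ be a triangulation of $\mathbf{P}_n$ (using only chords of $\mathbf{P}_n$) that refines every $\sigma \in \mathcal{S}$; such a $\mathcal{T}^*$ exists since each $\sigma$ uses only chords of $\mathbf{P}_n$. For each small triangle $T \in \mathcal{T}^*$, set
\[
c(T) := \#\{\sigma \in \mathcal{S}: T \text{ lies in a black polygon of } \sigma\}.
\]
Since $\mathcal{T}^*$ refines each $\sigma$, the triangle $T$ is entirely contained in one polygon of $\sigma$, so $c(T)$ is well-defined. The goal is to show $c(T) = \binom{n-3}{k-1}$ for every $T \in \mathcal{T}^*$.

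Next, I would show $c$ is constant on $\mathcal{T}^*$. Let $T, T' \in \mathcal{T}^*$ share an edge $(i,j)$, which is necessarily an internal chord of $\mathbf{P}_n$; say $T$ lies to the left of the oriented arc $i \to j$. For each $\sigma \in \mathcal{S}$, the triangles $T$ and $T'$ are colored differently by $\sigma$ if and only if $(i,j)$ is an edge of $\sigma$ separating two polygons of opposite color. Hence $T$ is black and $T'$ white exactly when $i \to j$ is a facet-defining arc of $\sigma$, and $T$ white and $T'$ black exactly when $j \to i$ is facet-defining. Consequently,
\[
c(T) - c(T') = \#\{\sigma: i\to j \text{ facet-defining}\} - \#\{\sigma: j \to i \text{ facet-defining}\},
\]
which is $0$ by \cref{cor:necessary}. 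Since the dual graph of $\mathcal{T}^*$ is connected, $c(T)$ is a constant $c$ independent of $T$.

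Finally, I would compute $c$ by a double count. Measure area so that each triangle of any triangulation of $\mathbf{P}_n$ has area $1$; then $\mathbf{P}_n$ has total area $n-2$, and the black region of any $\sigma$ of type $(k,n)$ has area exactly $k$. Summing over $\mathcal{S}$ and using $|\mathcal{S}| = \binom{n-2}{k}$ from \cref{thm:magic-number},
\[
c \cdot (n-2) \;=\; \sum_{T \in \mathcal{T}^*} c(T) \;=\; \sum_{\sigma \in \mathcal{S}} (\text{black area of } \sigma) \;=\; k\binom{n-2}{k},
\]
which gives $c = \frac{k}{n-2}\binom{n-2}{k} = \binom{n-3}{k-1}$, as desired.

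The main potential obstacle is the bookkeeping at shared edges: one must be careful that $T$ and $T'$ inherit their color in $\sigma$ from the unique polygon of $\sigma$ containing them, and that this identifies the color swap at $(i,j)$ precisely with $(i,j)$ being facet-defining in $\sigma$. Once this dictionary is cleanly set up, the argument reduces to \cref{cor:necessary} plus an elementary area count, so no further technical obstacle is expected.
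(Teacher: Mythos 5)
Your proposal breaks at the very first step: the common refinement $\mathcal{T}^*$ you posit does not exist in general. You ask for a triangulation of $\mathbf{P}_n$ \emph{using only chords of $\mathbf{P}_n$} that simultaneously refines every $\sigma\in\mathcal{S}$, and justify its existence by noting that each $\sigma$ uses only chords. But distinct subdivisions in a tiling typically use \emph{crossing} chords (see \cref{fig:overlap_tilings}, where superimposing the ten subdivisions of \cref{fig:tiling} produces $13$ regions, including a pentagonal region whose vertices are crossing points of arcs); no vertex-to-vertex triangulation can refine two subdivisions whose cut arcs cross, so $c(T)$ is not well defined as you set it up. The first half of your argument is salvageable: if you replace $\mathcal{T}^*$ by the cells of the arrangement obtained by superimposing all arcs of all $\sigma\in\mathcal{S}$ (allowing crossing points as vertices), then each cell lies in a single polygon of each $\sigma$, adjacent cells meet along a segment of a unique diagonal $(i,j)$, and your difference computation together with \cref{cor:necessary} and connectivity of the dual graph does show that the covering multiplicity is constant on cells. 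That is a nice alternative route to the ``constancy'' part of the statement.

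The second half, however, collapses without the vertex-triangle refinement, and your fallback is not justified. With arrangement cells, the number of cells is not $n-2$ and the black region of a given $\sigma$ is not a union of exactly $k$ cells, so the double count $\sum_T c(T)=k\binom{n-2}{k}$ no longer makes sense. Your substitute, ``measure area so that each triangle of any triangulation of $\mathbf{P}_n$ has area $1$,'' is an assertion that such a normalization exists: Euclidean area does not have this property for $n\geq 5$, and the existence of a valuation on the cell algebra assigning every vertex-triangle total weight $1$ is itself a nontrivial claim that would need proof. So the identification of the constant as $\binom{n-3}{k-1}$ is a genuine gap. For comparison, the paper avoids any common refinement or measure: it forms the multiset $\mathcal{B}$ of all black triangles over $\mathcal{S}$ (of size $k\binom{n-2}{k}$ by \cref{thm:magic-number}), upgrades \cref{cor:necessary} to the balanced-arc property \eqref{eq:arcs-covered-evenly} for $\mathcal{B}$, and then greedily extracts triangulations of the $n$-gon from $\mathcal{B}$, each using $n-2$ triangles; since $k\binom{n-2}{k}=(n-2)\binom{n-3}{k-1}$, there are $\binom{n-3}{k-1}$ extracted triangulations, giving the covering. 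Some such combinatorial extraction (or an honest construction of your normalized ``area'') is exactly the ingredient your write-up is missing.
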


\begin{example}
Consider the collection $\mathcal{S}$ of bicolored subdivisions in \cref{fig:tiling}. Let us color its black polygons into $6$ different colors: red ($\textcircled{\raisebox{-0.9pt}{A}}$), orange ($\textcircled{\raisebox{-0.9pt}{B}}$), yellow ($\textcircled{\raisebox{-0.9pt}{C}}$), green ($\textcircled{\raisebox{-0.9pt}{D}}$), blue ($\textcircled{\raisebox{-0.9pt}{E}}$) and purple ($\textcircled{\raisebox{-0.9pt}{F}}$) as in \cref{fig:overlap_tilings}. Then each collection of polygons of the same color is a subdivision of the $7$-gon. Hence superimposing all bicolored subdivisions in $\mathcal{S}$, we obtain a ${7-3 \choose 3-1}={4 \choose 2}=6$-fold covering of the $7$-gon by black polygons.

    \begin{figure}[h]
\includegraphics[width=0.9\textwidth]{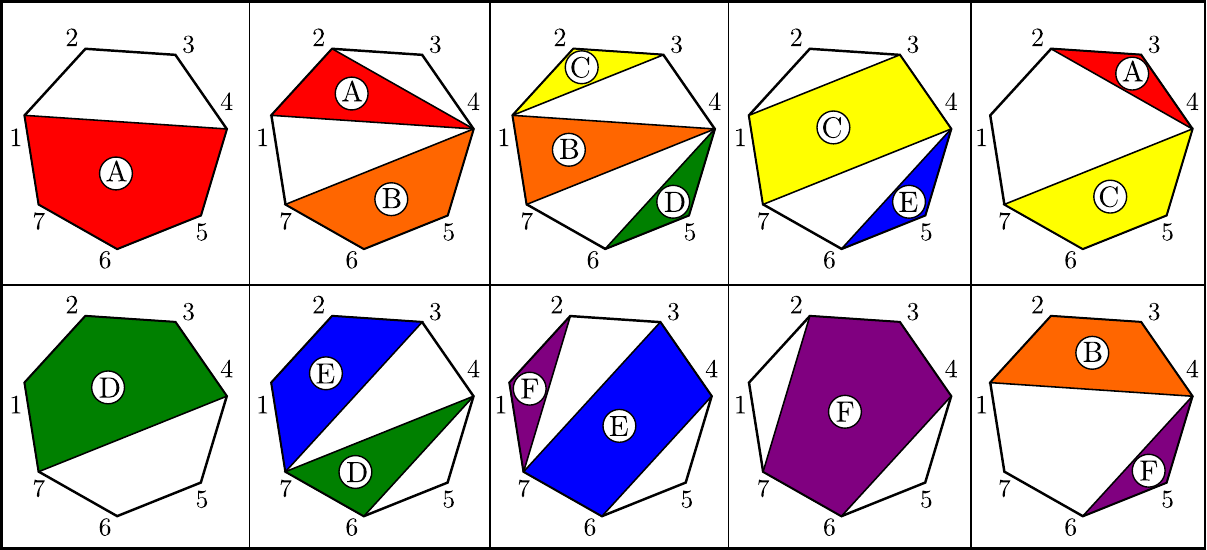}
\caption{Partitioning the black polygons in $\mathcal{S}$ of \cref{fig:tiling}  into $6$ subdivisions of the $7$-gon.}
        \label{fig:overlap_tilings}
\end{figure}
\end{example}

\begin{proof}[Proof of \cref{prop:covering-ngon-necessary}]
Let $\mathcal{B}$ be the multi-set of black polygons appearing in the subdivisions in $\mathcal{S}$. Since each subdivision in $\mathcal{S}$ has area $k$ and by \cref{thm:magic-number} we have $|\mathcal{S}|= \binom{n-2}{k}$, the total area of all polygons in $\mathcal{B}$ is $k \binom{n-2}{k}$.

Similarly to bicolored subdivisions, we call an arc $i \to j$ a facet-defining arc of a polygon $p$ in $\mathcal{B}$ if $(i,j)$ is an edge of $t$ and the other vertices of $p$ is in the cyclic interval $[i+1,j-1]$. 
\cref{cor:necessary} implies that the multiset $\mathcal{B}$ has the property that for all diagonals $(i,j)$ of the $n$-gon,
\begin{equation}\label{eq:arcs-covered-evenly}
\#\{p \in \mathcal{B}:i \to j \text{ a facet-defining arc} \}= \#\{p \in \mathcal{B}:j \to i \text{ a facet-defining arc} \}.\end{equation}

Now, choose some $p \in \mathcal{B}$. For each internal facet-defining arc $i \to j$ of $p$, \eqref{eq:arcs-covered-evenly} implies there is another polygon $p_{ij} \in \mathcal{B}$ for which $j \to i$ is a facet defining arc--meaning that $p_{ij}$ lies on the opposite side of the diagonal $(i,j)$. The polygons $p_{ij}$ have disjoint interiors. For each $p_{ij}$, consider each ``exposed" edge; that is, each internal facet-defining arc which is not an edge of $p$. Again, \eqref{eq:arcs-covered-evenly} implies that we can find a polygon in $\mathcal{B}$ which covers the other side of this edge. Again, all of the polygons we choose to cover the exposed edges of the $p_{ij}$ have disjoint interiors. We continue this process, successively covering the ``exposed" facet-defining arcs of the polygons we have obtained, until we have a collection of polygons in $\mathcal{B}$ which give a subdivision of the $n$-gon. Let $\mathcal{B}'$ be the multiset obtained from $\mathcal{B}$ by removing this collection of polygons. Note that $\mathcal{B}'$ still satisfies \eqref{eq:arcs-covered-evenly}. So we may iterate this procedure of choosing a polygon in $\mathcal{B'}$, then successively choosing polygons to cover the exposed edges until we obtain a subdivision, then removing the subdivision from $\mathcal{B'}$ to obtain a new multiset. This procedure will terminate when our multi-set becomes empty; that is, when we have removed $k \binom{n-2}{k}$ total area from $\mathcal{B}$.

Each time we iterate, we remove a subdivision of the $n$-gon, and thus remove polygons with total area $n-2$. So the procedure terminates after $\binom{n-3}{k-1}$ steps, since $$(n-2)\binom{n-3}{k-1}= k\binom{n-2}{k}.$$ 

In summary, the black polygons from the subdivisions in $\mathcal{S}$ cover the $n$-gon $\binom{n-3}{k-1}$ times, as desired.
    
\end{proof}

\begin{remark}
    The proof of \cref{prop:covering-ngon-necessary} shows something slightly stronger. In the setting of \cref{prop:covering-ngon-necessary}, one can partition the multi-set of black polygons in $\mathcal{S}$ into $\binom{n-3}{k-1}$ subdivisions of the $n$-gon. See \cref{fig:overlap_tilings} for an example.
\end{remark}

\section{Combinatorics of Parke-Taylor polytopes and Parke-Taylor identities}\label{sec:combPT}

In this section we use tricolored subdivisions (cf. \cref{def:tricolored}) to introduce and study Parke-Taylor polytopes.  We also 
use them to derive new Parke-Taylor identities.

\subsection{Parke-Taylor polytopes}

Our combinatorial results on polytopes
associated to bicolored subdivisions in \cref{sec:combtiles} have natural extensions to 
tricolored subdivisions as well.  However, in order to make sense of 
this, we need to work with the projection $\pi: \R^n \to \R^{n-1}$ from \cref{rem:hypercube}.

\begin{definition}
Given a bicolored subdivision $\sigma$, let 
$\ptile{\sigma}$ denote the projected polytope $\pi(\htile{\sigma})$.
Given a $w$-simplex $\Delta_w$, let $\psimp_w$ denote
the projected simplex $\pi(\Delta_w)$.
\end{definition}

We also need to extend \cref{def:bicolored} to the 
setting of tricolored subdivisions.

\begin{definition}
Let $\tdiv$ be a tricolored subdivision.
      Given a pair of vertices $i,j$ of $\mathbf{P}_n$, we say that 
       the arc $i \to j$ is \emph{compatible} with $\tdiv$ if the arc either 
       is an edge of a black, white, or grey polygon, 
       or lies  entirely inside a black or white polygon of $\tdiv$.  
If $i \to j$ is compatible with $\tdiv$, the \emph{area to the left of }$i \to j$ (respectively, \emph{grey area to the left of }$i \to j$), denoted by $\area(i \to j)$ (respectively, $\garea(i \to j)$), is the number of black triangles (respectively, grey triangles) to the left of $i \to j$ in any triangulation of the black (respectively, grey) polygons of $\tdiv.$  
\end{definition}

\begin{definition}
Let $\tdiv$ be a tricolored subdivision of $\mathbf{P}_n$ of type 
$(k,\ell,n).$
We define the 
\emph{Parke-Taylor}
polytope $\ptile{\tdiv} \subset \R^{n-1}$ by the following inequalities: 
    for any compatible arc $i \to j$ with $i<j, $
    $$ \area(i\to j) \leq x_{[i,j-1]}\leq \area(i\to j)+\garea(i\to j)+1.$$
\end{definition}

\begin{example}\label{ex:cube}
    If $\tdiv$ is the tricolored subdivision of $\mathbf{P}_n$ which is just a grey polygon on $n$ vertices, then the Parke-Taylor polytope
    $\ptile{\tdiv}$ is the unit hypercube $\mbox{\mancube}_{n-1} \subset \R^{n-1}$.
\end{example}

\begin{example}
    For the tricolored subdivision $\tdiv$ from \cref{fig:tricolored} of type $(3,2,8)$, the Parke-Taylor  polytope $\ptile{\tdiv} \subset \mathbb{R}^7$ is defined by the inequalities $0 \leq x_i \leq 1$ for $1 \leq i\leq 7$, \ 
     $3 \leq x_{[1,7]} \leq 6$, and 
    \begin{align*}
        1 \leq x_{[5,6]} \leq 2, \qquad 0 \leq x_{[2,4]} \leq 2, \qquad 1 \leq x_{[2,6]} \leq 4\\
        2 \leq x_{[1,6]} \leq 5, \qquad 2 \leq x_{[2,7]} \leq 5.
    \end{align*}
     Note that this is not a minimal description of $\ptile{\tdiv}$, as the inequality $1 \leq x_{[2,6]} \leq 4$ is implied by 
$1 \leq x_{[5,6]} \leq 2$ and $0 \leq x_{[2,4]} \leq 2$.     
\end{example}

The Parke-Taylor polytope $\ptile{\tdiv}$ is related to the (projected) positroid tiles as follows.

\begin{proposition}\label{prop:tri-sub-polytope-as-union} 
Let $\tdiv$ be a tricolored subdivision. Let $\mathcal{S}$ denote the set of bicolored subdivisions obtained from $\tdiv$ by replacing each grey polygon $p$ by any bicolored subdivision $\sdiv_p$ of $p$. Then we have a covering of 
$\ptile{\tdiv}$ by projected positroid tiles:
\begin{equation} \label{eq:bigUnion}
\ptile{\tdiv} = \bigcup_{\sdiv \in \mathcal{S}} \ptile{\sdiv}.
\end{equation}

Moreover, there is a collection $\mathcal{K}$ of bicolored subdivisions obtained from 
$\tdiv$ by choosing a distinguished vertex $v_p$ of each grey polygon $p$ of $\tdiv$ and replacing $p$ with all possible kermit subdivisions of $p$ (based at $v_p$), such that we have a covering of 
$\ptile{\tdiv}$ by projected positroid tiles with disjoint interiors:
\begin{equation} \label{eq:Kermit}
\ptile{\tdiv} = \bigcup_{\sdiv \in \mathcal{K}} \ptile{\sdiv}.
\end{equation}

\end{proposition}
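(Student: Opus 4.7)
The plan is to establish both equalities by identifying $\ptile{\tdiv}$ with a union of projected $w$-simplices indexed by circular extensions of the partial cyclic order $C_{\tdiv}$, thereby generalizing \cref{cor:w-simp-in-tile-cyclic-order-version} to the tricolored setting. Once this identification is in place, both parts will reduce to combinatorial statements about circular extensions, and the kermit case of \cref{cor:allk} will supply the uniqueness needed for the disjoint tiling by $\mathcal{K}$.

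Concretely, I would first prove the generalized identity
\[
\ptile{\tdiv} \;=\; \bigcup_{(w)\in\Ext(C_{\tdiv})} \pi(\simp{w}).
\]
The argument adapts the inductive proof of \cref{thm:when-w-simp-in-tile}: for each compatible arc $a\to b$ of $\tdiv$, one shows by induction on the size of the region to the left of $a\to b$ that the Parke--Taylor inequality $\area_{\tdiv}(a\to b) \le |I_b\cap [a,b-1]| \le \area_{\tdiv}(a\to b) + \garea_{\tdiv}(a\to b)$ holds on $\simp{w}$ if and only if $(w)$ respects the cyclic-order conditions coming from the black and white polygons of $\tdiv$. The role of the slack $\garea$ in the inequality is precisely to absorb the absence of any cyclic-order constraint on vertices of a grey polygon; in the inductive step, whenever the polygon to the left of the current arc is grey, one treats it as a single block whose contribution is purely to $\garea$, and whenever it is black or white one splits through a vertex of that polygon exactly as in the bicolored proof.

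With the identity in hand, \eqref{eq:bigUnion} is immediate: for $(w)\in\Ext(C_{\tdiv})$ and each grey polygon $p$, the restriction $C_w|_{V(p)}$ is a total cyclic order on $V(p)$, and \cref{cor:allk} applied to $V(p)$ (viewed as its own cyclic vertex set) furnishes a bicolored subdivision $\sdiv_p$ of $p$ whose cyclic order is extended by $C_w|_{V(p)}$; assembling these choices across all grey polygons produces $\sdiv\in\mathcal{S}$ with $(w)\in\Ext(C_{\sdiv})$, and hence $\pi(\simp{w})\subseteq \ptile{\sdiv}$ via the bicolored case (\cref{cor:w-simp-in-tile-cyclic-order-version}). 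For \eqref{eq:Kermit}, the same construction with the distinguished vertex $v_p$ fixed yields, by the uniqueness clause of \cref{cor:allk}, a \emph{unique} kermit refinement $\sigma^{v_p}_{I_p}$ of $p$ whose cyclic order is extended by $C_w|_{V(p)}$; collating across grey polygons gives a bijection $\Ext(C_{\tdiv}) = \bigsqcup_{\sdiv\in\mathcal{K}}\Ext(C_{\sdiv})$, which translates via the generalized identity into the desired disjoint-interior covering, since the $w$-simplices themselves have pairwise disjoint interiors by \cref{prop:StanleyTriangulation}.

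The main technical obstacle is the converse direction of the generalized identity: that $\pi(\simp{w})\subseteq \ptile{\tdiv}$ forces $(w)\in\Ext(C_{\tdiv})$. In the bicolored proof, a violation of the cyclic-order condition on some white or black polygon $P$ produces a contradiction on the Parke--Taylor inequality for a suitably chosen ``bad'' arc inside $P$; in the tricolored setting, the slack $\garea$ could in principle absorb such a contradiction if grey polygons lie to the left of that arc. I expect to resolve this by selecting the bad triple of vertices in $P$ so that the resulting compatible arc has $\garea$ contribution equal to zero --- which one can always arrange by choosing the triple to lie inside a single colored polygon $P$ and using the additive decomposition of $\area$ and $\garea$ across the arc-subdivision of $P$, forcing the slack contributions from the two sub-arcs to sum to exactly that of the combined arc and producing a genuine numerical contradiction.
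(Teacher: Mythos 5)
Your route is genuinely different from the paper's, and in an appealing way: you want to first prove the cyclic-order description $\ptile{\tdiv}=\bigcup_{(w)\in\Ext(C_{\tdiv})}\psimp_{w}$ (this is the paper's \cref{prop:volume}) by pushing the induction of \cref{thm:when-w-simp-in-tile} through grey polygons, and then obtain both \eqref{eq:bigUnion} and \eqref{eq:Kermit} from \cref{cor:allk} applied polygon by polygon. The paper argues in the opposite order: it proves $\bigcup_{\sdiv\in\mathcal{S}}\ptile{\sdiv}\subseteq\ptile{\tdiv}$ directly from the inequalities, proves $\ptile{\tdiv}\subseteq\bigcup_{\sdiv\in\mathcal{K}}\ptile{\sdiv}$ by an explicit recursion that colors the grey polygons one at a time for a point with non-integral coordinate sums (processing the grey polygons in an order chosen so that every inequality invoked has no $\garea$ slack), and only afterwards deduces \cref{prop:volume}. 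So your step 1 cannot be outsourced to \cref{prop:volume} without circularity, and it is exactly there that your plan has a genuine gap.

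The gap is the converse direction you flag, and your proposed repair does not close it. Choosing the bad triple $a,c,b$ inside a single colored polygon $P$ does not make the slack vanish: $\garea(a\to c)$ and $\garea(c\to b)$ count grey triangles lying beyond the far edges of $P$, and the additivity you invoke only yields $|I_b\cap[a,b-1]|=|I_c\cap[a,c-1]|+|I_b\cap[c,b-1]|+1$, which is compatible with all three slab inequalities unless both sub-arc slacks are saturated. In fact no argument from the listed inequalities alone can work. Take the hexagon with white triangle $\{1,3,5\}$ and grey triangles $\{1,2,3\},\{3,4,5\},\{5,6,1\}$: the defining inequalities of $\ptile{\tdiv}$ reduce to $0\le x_i\le 1$, $x_{[1,2]}\le 2$, $x_{[3,4]}\le 2$, $x_{[1,4]}\le 3$, $x_{[1,5]}\le 4$, and these are satisfied by the projection of all of $\Delta_{3,6}$; hence $\psimp_{w}$ satisfies them for every $w\in D_{3,6}$, including $(w)=(1\,5\,3\,2\,4\,6)$, which is not in $\Ext(C_{\tdiv})$ and whose simplex lies in none of the eight tiles $\ptile{\sdiv}$, $\sdiv\in\mathcal{S}$ (each $C_{\sdiv}$ contains the chain $(1,3,5)$, so \cref{cor:w-simp-in-tile-cyclic-order-version} applies; one can also check directly that the barycenter violates a facet inequality of every tile). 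Moreover the union of those eight tiles contains all six vertices of that simplex but not its barycenter, so it is not even convex, and the identity you want cannot be rescued by adding further linear inequalities in the projected coordinates. The upshot: your reductions of \eqref{eq:bigUnion} and \eqref{eq:Kermit} to \cref{cor:allk}, granted the identity, are fine (and would even give \eqref{eq:Kermit} for arbitrary distinguished vertices), but the identity itself is not a consequence of the single-arc slab inequalities and fails verbatim for this $\tdiv$; any correct treatment must use a finer mechanism, which is what the paper's recursive coloring with its ordered choice of distinguished vertices is designed to supply --- and, as the same example shows, even that stage is delicate, since after recoloring one grey polygon the recursion needs the tightened inequalities of the new subdivision, which do not follow from membership in $\ptile{\tdiv}$ alone.
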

\begin{remark}
\cref{eq:Kermit} should remain true if we replace $\mathcal{K}$ by any collection $\mathcal{K}'$ of bicolored subdivisions obtained as follows: for each grey $r$-gon $p$ of $\tdiv$, choose a tiling $\mathcal{T}_a =\{\htile{\sdiv'}\}_{\sdiv' \in \mathcal{S}_a}$ of $\Delta_{a+1, r}$ for $a=0, \dots, r-2$. A bicolored subdivision $\sdiv$ is in $\mathcal{K'}$ if it is obtained from $\tdiv$ by replacing each $p$ with some $\sdiv' \in \mathcal{S}_a$. In this language, to obtain the collection $\mathcal{K}$ in \cref{prop:tri-sub-polytope-as-union}, choose each $\mathcal{T}_a$ to be a kermit tiling based at $v_p$. 
\end{remark}

See \cref{fig:pt-polytope-subdivision} for an example of the set $\mathcal{K}$.

\begin{proof}[Proof of \cref{prop:tri-sub-polytope-as-union}]
We first show 
$\bigcup_{\sdiv \in \mathcal{S}} \ptile{\sdiv} \subset \ptile{\tdiv}.$

    Let $\sdiv \in \mathcal{S}$ be a bicolored subdivision, and consider a point $x \in \ptile{\sdiv}.$ Choose an arc $i \to j$ (with $i <j$) that is compatible with $\tdiv$. 
    Clearly 
    the arc $i \to j$ is also compatible with $\sdiv$. We have that $x$ satisfies
    \[\area_{\sdiv}(i \to j) \leq x_{[i, j-1]} \leq \area_{\sdiv}(i \to j)+1.\]
    Note that $\area_{\tdiv}(i \to j) \leq \area_{\sdiv}(i \to j)$, since $\sdiv$ can only have more black polygons than $\tdiv$, and $$\area_{\tdiv}(i \to j) + \garea_{\tdiv}(i \to j) \geq \area_{\sdiv}(i \to j),$$ since at worst all grey regions to the left of $i \to j$ in $\tdiv$ have been colored black in $\sdiv$. This shows that $\bigcup_{\sdiv \in \mathcal{S}} \ptile{\sdiv} \subset \ptile{\tdiv}$.

    Now we will construct the collection $\mathcal{K}$ and show $\ptile{\tdiv} \subset \bigcup_{\sdiv \in \mathcal{K}} \ptile{\sdiv}$. To construct $\mathcal{K}$, we will choose distinguished vertices of the grey polygons of $\tdiv$ oen by one.

     Pick a grey polygon $p$ of $\tdiv$ 
     with no grey polygons to its left; that is, some boundary arc $i \to j$ of $p$ (with $i<j$) has $p$ to its left but no other grey polygon. Choose $i$ as the distinguished vertex $v_p$ of $p$. Repeat this process, choosing a grey polygon $p'$ of $\tdiv$ such that for some boundary arc $a \to b$ of $p'$ (with $a<b$), the grey polygons to the left of $a \to b$ are $p'$ and grey polygons whose distinguished vertices have already been chosen. Choose $a$ as the distinguished vertex $v_{p'}$.
    
    Now, consider a point $x \in \ptile{\tdiv}$ whose coordinate sums $x_{[i,j]}$ are all non-integral. Let $r := \lfloor x_{[n-1]}\rfloor$. Lift $x$ to the point $y=(x, r+1-x_{[n-1]}) \in \Delta_{r+1,n} \subset \R^n$. We would like to find a bicolored subdivision $\sdiv \in \mathcal{K}$ of area $r$ such that $y \in \htile{\sdiv}$. To do so, we will color the grey polygons of $\tdiv$ one by one, in the same order as we chose the distinguished vertices. 
    
    We begin with grey polygon $p$, with distinguished vertex $v_p=i$ and boundary arc $i \to j$ with no other grey polygons to its left. Triangulate $p$ with edges with one endpoint $v_p$. Once we color each triangle black or white, we will obtain a kermit subdivision of $p$ based at $v_p$.

     Suppose that going clockwise around $p$, we see vertex $v_p=i$, then $a$ then $b$, so $i \to b$ is an arc in the triangulation. Note that $i \to a$ and $a \to b$ are compatible with $\tdiv$ and
    $$\area_\tdiv(i \to b) = \area_\tdiv(i \to a)+ \area_\tdiv(a \to b).$$
    Meanwhile the inequalities of $\ptile{\tdiv}$ for arcs $i \to a$ and $a \to b$ say that 
    \begin{align*}
     \area_\tdiv(i \to a) \leq y_{[i,a-1]} &\leq \area_{\tdiv}(i\to a)+1, \text{ and } \\
     \area_\tdiv(a \to b) \leq y_{[a,b-1]} &\leq \area_{\tdiv}(a\to b)+1, 
    \end{align*}
    and hence 
    $y$ satisfies
    \[\area_\tdiv(i \to b) \leq y_{[i, b-1]} \leq \area_\tdiv(i \to b) +2.\]
    
    By assumption $y$ has non-integral consecutive coordinate sums, so we have either 
    \[\area_\tdiv(i \to b) < y_{[i, b-1]}< \area_\tdiv(i \to b)+ 1 \quad \text{ or }\quad \area_\tdiv(i \to b)+1 < y_{[i, b-1]}< \area_\tdiv(i \to b)+ 2.\]
    In the former case, we color the triangle on $i,a, b$ white in $\sdiv$ and in the latter case we color the triangle black. In either case, $y$ satisfies
    \[\area_{\sdiv}(i \to b) < y_{[i, b-1]}< \area_{\sdiv}(i \to b)+ 1.\]
    Repeat this for all triangles of the chosen triangulation for $p$, starting with the triangle adjacent to the one we just colored. This gives a new tricolored subdivision $\tdiv'$ with one fewer grey polygon than $\tdiv$. Repeating the above procedure on $\tdiv'$ produces the desired bicolored subdivision $\sdiv$.

    This shows that a dense subset of points in $\ptile{\tdiv}$ is contained in $\bigcup_{\sdiv \in \mathcal{K}}\ptile{\sdiv}$. Since this union is closed, the desired containment follows.

    Now, we have
    \[\bigcup_{\sdiv \in \mathcal{K}} \ptile{\sdiv} \subset \bigcup_{\sdiv \in \mathcal{S}} \ptile{\sdiv} \subset \ptile{\tdiv} \subset \bigcup_{\sdiv \in \mathcal{K}} \ptile{\sdiv}\]
    where the leftmost containment is because $\mathcal{K} \subset \mathcal{S}$ and the other containments have been argued above. This shows both \eqref{eq:bigUnion} and \eqref{eq:Kermit}.

    Finally, the polytopes $\{\ptile{\sdiv}\}_{\sdiv \in \mathcal{K}}$ have disjoint interiors. Indeed, it follows from \cref{cor:allk} that a projected $w$-simplex $\psimp_{w}$ lies in at most one $\ptile{\sdiv}$ for $\sdiv \in \mathcal{K}$. The polytopes $\ptile{\sdiv}$ are triangulated by projected $w$-simplices, so this implies no two polytopes in $\{\ptile{\sdiv}\}_{\sdiv \in \mathcal{K}}$ have full-dimensional intersection and thus they have disjoint interiors.
\end{proof}

We also have a triangulation of $\ptile{\tdiv}$ into projected 
$w$-simplices; this is a generalization of \cref{cor:w-simp-in-tile-cyclic-order-version}.

\begin{figure}
    \centering
    \includegraphics[width=\textwidth]{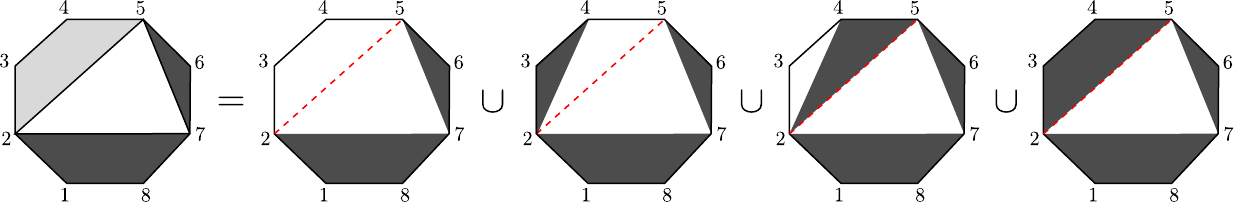}
    \caption{An illustration of the second part of \cref{prop:tri-sub-polytope-as-union}. The polytope $\ptile{\sdiv}$ whose $\sdiv$ is on the left hand side and in \cref{fig:tricolored} is the union of the polytopes $\ptile{\tau}$ for $\tau$ on the right hand side. The distinguished vertex of the grey polygon is $2$ in this case.}
    \label{fig:pt-polytope-subdivision}
\end{figure}

\begin{proposition}\label{prop:volume}
Let $\tdiv$ be a tricolored subdivision of type $(k,\ell,n)$. Then
\[\ptile{\tdiv}= \bigcup_{(w) \in \Ext(C_\tdiv)} \psimp_w.\]
It follows that the volume of $\ptile{\tdiv}$ is the number of circular extensions
of the cyclic order $C_{\tdiv}$. That is, $\Vol(\htile{\tdiv})= |\Ext(C_\tdiv)|$.
\end{proposition}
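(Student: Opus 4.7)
The plan is to combine \cref{prop:tri-sub-polytope-as-union} with \cref{cor:w-simp-in-tile-cyclic-order-version} and then invoke the bijection in \cref{cor:allk} one grey polygon at a time. First I would pick the collection $\mathcal{K}$ of bicolored subdivisions produced by \cref{prop:tri-sub-polytope-as-union}, so that $\ptile{\tdiv} = \bigcup_{\sdiv \in \mathcal{K}} \ptile{\sdiv}$ with the projected tiles having pairwise disjoint interiors. Since each $\htile{\sdiv}$ is triangulated by $w$-simplices $\simp{w}$ for $(w) \in \Ext(C_\sdiv)$, projecting via $\pi$ yields $\ptile{\sdiv} = \bigcup_{(w) \in \Ext(C_\sdiv)} \psimp_w$. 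Substituting into the expression for $\ptile{\tdiv}$ reduces the first claim to showing the set-theoretic identity
\begin{equation*}
\Ext(C_\tdiv) \;=\; \bigsqcup_{\sdiv \in \mathcal{K}} \Ext(C_\sdiv).
\end{equation*}

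The containment $\supseteq$ is immediate because $C_\tdiv \subseteq C_\sdiv$ for every $\sdiv \in \mathcal{K}$ (replacing a grey polygon by a bicolored subdivision only \emph{adds} chain relations). For the reverse containment and for disjointness, I would proceed grey polygon by grey polygon. Fix $(w) \in \Ext(C_\tdiv)$. For each grey polygon $p$ of $\tdiv$ with distinguished vertex $v_p$, restrict $C_w$ to the vertex set $V(p)$ of $p$; by \cref{rem:subcyclic} this restriction is a total cyclic order on $V(p)$. Now apply \cref{cor:allk} (with $n$ replaced by $|V(p)|$ and $v = v_p$): there is a \emph{unique} kermit cyclic order $C_{\sigma^{v_p}_{I_p}}$ on $V(p)$ of which $C_w|_{V(p)}$ is a circular extension. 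Doing this simultaneously for every grey polygon $p$ determines a unique element $\sdiv \in \mathcal{K}$, namely the one in which $p$ is replaced by $\sigma^{v_p}_{I_p}$; by construction $C_w$ extends $C_\sdiv$, and no other $\sdiv' \in \mathcal{K}$ has this property. This gives both existence and uniqueness, and hence the disjoint union.

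Chaining the three equalities then gives
\begin{equation*}
\ptile{\tdiv} \;=\; \bigcup_{\sdiv \in \mathcal{K}} \ptile{\sdiv} \;=\; \bigcup_{\sdiv \in \mathcal{K}} \bigcup_{(w) \in \Ext(C_\sdiv)} \psimp_w \;=\; \bigcup_{(w) \in \Ext(C_\tdiv)} \psimp_w,
\end{equation*}
which is the first claim. For the volume statement, recall from \cref{prop:StanleyTriangulation} that the projected simplices $\{\psimp_w\}$ form a unimodular triangulation of the hypercube $\mbox{\mancube}_{n-1}$, so each $\psimp_w$ has normalized volume $1$ and the pieces in the display above meet only in codimension one (disjointness of interiors follows from unimodularity together with the disjointness of the $\ptile{\sdiv}$). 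Therefore $\Vol(\ptile{\tdiv})$ equals the number of terms in the union, which is $|\Ext(C_\tdiv)|$.

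The main obstacle is the one-grey-polygon-at-a-time argument: one must check that choosing a unique kermit extension on each grey polygon's vertex set is compatible across polygons, i.e.\ the resulting $\sdiv$ really lies in $\mathcal{K}$ and really has $C_w$ as a circular extension. This is where the freedom to choose the distinguished vertices $v_p$ in \cref{prop:tri-sub-polytope-as-union} is essential: the grey polygons of $\tdiv$ only share vertices (not triples of vertices), so the restrictions of $C_w$ to the different $V(p)$ are independent and the uniqueness from \cref{cor:allk} can be applied polygon by polygon without conflict.
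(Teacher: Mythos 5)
Your proof is correct and follows essentially the same route as the paper: it combines \cref{prop:tri-sub-polytope-as-union} with \cref{cor:w-simp-in-tile-cyclic-order-version}, then uses the containment $C_\tdiv \subseteq C_\sdiv$ for one direction and \cref{cor:allk} applied grey polygon by grey polygon (via restriction of $C_w$, \cref{rem:subcyclic}) for the other, with unimodularity of the $w$-simplices giving the volume count. Your explicit formulation of the key step as the partition $\Ext(C_\tdiv) = \bigsqcup_{\sdiv \in \mathcal{K}} \Ext(C_\sdiv)$ is just a slightly more explicit packaging of the paper's argument.
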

\begin{proof}
By \cref{prop:tri-sub-polytope-as-union},
$$\ptile{\tdiv}= \bigcup_{\sdiv} \ptile{\sdiv}$$
where the union is over the
collection $\mathcal{K}$ of bicolored subdivisions obtained from 
$\tdiv$ by choosing a distinguished vertex $v_p$ of each grey polygon $p$ of $\tdiv$ and replacing $p$ with all possible kermit subdivisions of $p$ (based at $v_p$).
Therefore if $\psimp_w$ lies in $\ptile{\tdiv}$, then
$\psimp_w$ lies in some $\ptile{\sdiv}$
for some $\sdiv\in \mathcal{K}$, which implies that $C_w$ is a circular extension of $C_{\sdiv}$.  But $C_{\tdiv} \subset C_{\sdiv}$, and hence $C_w$ is a circular extension of $C_{\tdiv}$ as well.

Given a circular extension $C_w$ of $C_{\tdiv}$, let $C_w|_p$ be the restriction of $C_w$ to the vertices of a grey polygon $p$ of $\tdiv$. By \cref{cor:allk}, there is exactly one kermit subdivision $\sdiv(p)$ of $p$ (using the distinguished vertex $v_p$) for which $C_w|_p$ is a circular extension of $C_{\sdiv(p)}$. This means that $C_w$ is a circular extension of $C_{\sdiv}$, where $\sdiv$ is obtained by subdividing each grey polygon $p$ with $\sdiv(p)$. Such a subdivision $\sdiv$ is one of the subdivisions described in the previous paragraph.
\end{proof}

\begin{example}
    For the tricolored subdivision $\tdiv$ in \cref{fig:tricolored}, we have 
    \[\Vol(\ptile{\tdiv})=55+62+127+127=371\]
    which is the number of circular extensions of the cyclic order 
    $C_\tdiv=C_{(2,5,7)} \cup C_{(5,7,6)} \cup C_{(1,8,7,2)}.$
    Each term in the sum above is the volume of $\ptile{\sdiv}$ for $\sdiv$ on the right hand side of \cref{fig:pt-polytope-subdivision}.
\end{example}

\begin{remark}\label{rem:AJVR}
\cref{prop:volume} is quite analogous to 
a result from \cite{AJR_cyclicOrders}, 
which shows that the (normalized) volume of a certain polytope $B_{I,n}$ equals the number of circular extensions of a partial cyclic order $A_{I,n}$.  Here $I$ is a subset of 
$\binom{[n]}{2}$,  the polytope $B_{I,n}$ is defined as 
$$B_{I,n} = \{(x_1,\dots,x_n)\in [0,1]^n \ \vert \ x_{[i+1,j]} \leq 1 \text{ for each }\{i<j\} \in I\},$$ 
and the cyclic order $A_{I,n}$ is 
defined as the union of the chains $C_{(i, i+1,\dots,j)}$ for each $\{i<j\}\in I$.
In the special case when the intervals
$[i,j]$ (for $\{i<j\} \in I$) are noncrossing (i.e. we do not have 
$i<i'<j<j'$), $A_{I,n}$ is equal to $C_{\tdiv}$, where $\tdiv$ is a tricolored subdivision with no black polygons such that the vertices of every white polygon form an interval $[i,j]$; and $B_{I,n} \subset \R^n$ is, up to renaming variables, equal to $\ptile{\tdiv}$ times the interval $[0,1]$. For other $I\subset \binom{[n]}{2}$ and for other subdivisions $\tdiv$, the cyclic orders $A_{I,n}$ and $C_{\tdiv}$, and the polytopes $B_{I,n}$ and $\ptile{\tdiv}$, are not obviously related.
\end{remark}

\subsection{Parke-Taylor identities}

In this section we derive identities for Parke--Taylor functions, one for each tricolored subdivision.
Parke-Taylor identities have previously been studied and connected to: non-planar generalizations of \emph{plabic graphs} \cite{Arkani-Hamed:2014bca, Cachazo:2018wvl}; the \emph{momentum amplituhedron} \cite{Damgaard:2021qbi}; \emph{Lie polynomials} \cite{Frost:2019fjn}; and \emph{CGEM amplitudes} in relation to the configuration of $n$ points in $\mathbb{CP}^{k-1}$ \cite{Cachazo:2023ltw}.

\begin{theorem}\label{thm:gen-parke-taylor}
Let $\tdiv$ be a tricolored subdivision of $\mathbf{P}_n$ which contains at least one grey
polygon, and let $C_{\tdiv}$ be the corresponding
cyclic partial order.  Recall that 
$\Ext(C_{\tdiv})$ is the set of cyclic extensions of $C_{\tdiv}$.
 Then we have that 
$$\sum_{(w)\in \Ext(C_{\tdiv})} \PT(w) = 0.$$
\end{theorem}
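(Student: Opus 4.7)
The plan is to rewrite $\sum_{(w)\in \Ext(C_\tdiv)} \PT(w)$ as a sum of weight functions $\Omega(\htile{\sigma})$ over a carefully chosen family of bicolored refinements of $\tdiv$, and then exhibit a sign cancellation coming from the enumeration of kermit subdivisions.

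For each grey polygon $p$ of $\tdiv$ I pick a distinguished vertex $v_p$, and let $\mathcal{K}$ be the collection of bicolored subdivisions obtained from $\tdiv$ by replacing each grey polygon $p$ by an arbitrary kermit subdivision based at $v_p$, as in the second half of \cref{prop:tri-sub-polytope-as-union}. Applying \cref{cor:allk} separately to each grey polygon $p$ (using \cref{rem:subcyclic} to restrict to its vertex set), the restriction of any $C_w \in \Ext(C_\tdiv)$ to the vertices of $p$ is a total cyclic order that is a circular extension of a \emph{unique} kermit sub-cyclic order based at $v_p$; gluing these local choices across the grey polygons produces a unique $\sigma(w)\in \mathcal{K}$ with $C_w \in \Ext(C_{\sigma(w)})$. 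Conversely $C_\sigma \supseteq C_\tdiv$ for every $\sigma \in \mathcal{K}$, so any $C_w\in\Ext(C_\sigma)$ automatically lies in $\Ext(C_\tdiv)$. This yields the disjoint-union decomposition
\[
\Ext(C_\tdiv) \;=\; \bigsqcup_{\sigma\in \mathcal{K}} \Ext(C_\sigma).
\]

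Combining this decomposition with \cref{cor:w-simp-in-tile-cyclic-order-version} and \cref{prop:weight_tile}, which gives $\Omega(\htile{\sigma})=(-1)^{k(\sigma)}\PT(\mathbf{I}_n)$ where $k(\sigma)$ is the area of $\sigma$, I obtain
\[
\sum_{(w)\in \Ext(C_\tdiv)} \PT(w) \;=\; \sum_{\sigma\in\mathcal{K}} \Omega(\htile{\sigma}) \;=\; \Bigl(\sum_{\sigma\in\mathcal{K}} (-1)^{k(\sigma)}\Bigr)\PT(\mathbf{I}_n).
\]
Since the kermit choices at distinct grey polygons are independent, and by \cref{prop:tilings} (applied cyclically to each grey polygon) there are exactly $\binom{r_p-2}{a}$ kermit subdivisions of an $r_p$-gon based at $v_p$ of area $a$, the sign sum factors as
\[
\sum_{\sigma\in\mathcal{K}} (-1)^{k(\sigma)} \;=\; (-1)^{k_0} \prod_{p\text{ grey}} \sum_{a=0}^{r_p-2} (-1)^a \binom{r_p-2}{a} \;=\; (-1)^{k_0} \prod_{p\text{ grey}} (1-1)^{r_p-2},
\]
where $k_0$ is the number of black triangles in $\tdiv$. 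Because $\tdiv$ has at least one grey polygon and every polygon has at least three vertices, each factor $(1-1)^{r_p-2}$ vanishes, so the whole sum is $0$, proving the identity.

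The main obstacle is establishing the disjoint decomposition of $\Ext(C_\tdiv)$ into the pieces $\Ext(C_\sigma)$ for $\sigma\in\mathcal{K}$: this is what lets the independent local kermit enumerations package into the product structure that forces the cancellation. The remaining ingredients—constancy of $\Omega(\htile{\sigma})$ across all tiles (\cref{prop:weight_tile}) and the binomial count of kermit subdivisions (\cref{prop:tilings})—then fit together mechanically to force the vanishing the moment any grey polygon is present.
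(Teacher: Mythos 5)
Your proposal is correct and follows essentially the same route as the paper: decompose $\Ext(C_{\tdiv})$ into the extension sets of the bicolored subdivisions obtained by inserting kermit subdivisions (based at chosen vertices) into each grey polygon, using \cref{cor:allk} and \cref{rem:subcyclic}, then apply \cref{prop:weight_tile} and the alternating binomial count of kermit subdivisions by area to get the factor $\prod_p (1-1)^{r_p-2}=0$. The paper's proof is the same argument, merely phrased as "each $C_w$ extends exactly one $C_{\sigma_i}$" rather than as an explicit disjoint-union decomposition.
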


\begin{proof}

\begin{figure}[h]
\includegraphics[height=2in]{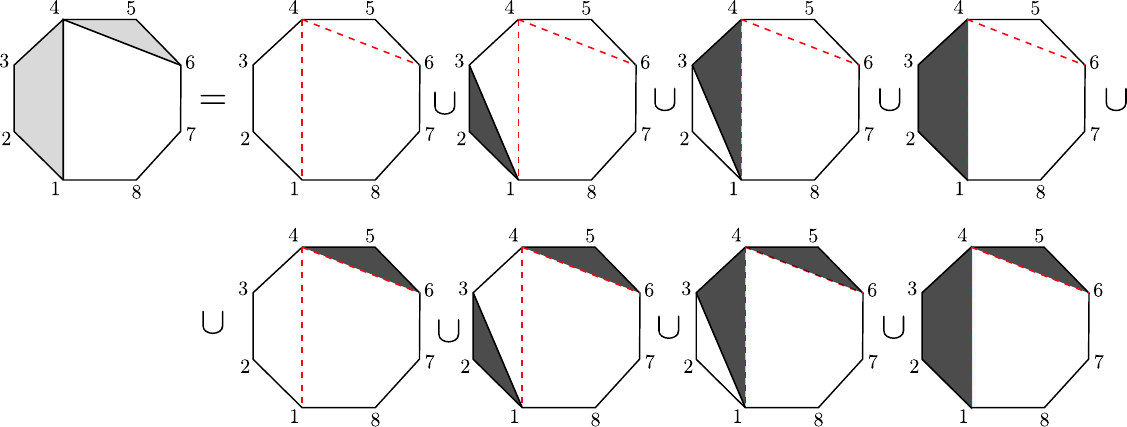}
        \caption{Example for both \cref{thm:gen-parke-taylor} and \cref{thm2:gen-parke-taylor} (with $S=\{1, 4, 6, 7, 8\}$). The dashed lines are to help the reader and are not part of the tricolored subdivision.}
        \label{fig:bigexample}
\end{figure}

Let us label the grey polygons of $\tdiv$ by $P_1,\dots,P_{\ell}$, and for each one, 
choose a distinguished vertex $v_1,\dots,v_{\ell}.$   We will apply \cref{cor:allk} and \cref{rem:subcyclic} (see also \cref{fig:kermitsubdivisions}) to each grey polygon $P_i$.  In particular, each total cyclic order
on the vertices of $P_i$ must be a cyclic extension of one of the partial cyclic orders associated to a ``kermit'' bicolored subdivision (based at $v_i$) of $P_i$.

Let $\sigma_1,\dots,\sigma_r$ be the set of all bicolored subdivisions of $\mathbf{P}_n$ obtained
from $\tdiv$ by replacing each grey polygon $P_i$ (for $1\leq i \leq \ell$) by a bicolored kermit subdivision of that polygon based at $v_i$, 
see \cref{fig:bigexample}.
By \cref{cor:allk}, each circular extension $C_w$ of $C_{\tdiv}$ must be a circular extension
of exactly one of the partial cyclic orders $C_{\sigma_i}$ for $1\leq i \leq r,$ which by
 \cref{cor:w-simp-in-tile-cyclic-order-version} corresponds to a $w$-simplex from the corresponding tile 
 $\Gamma_{\sigma_i}.$  Thus we have 
\begin{equation*}
    \sum_{(w)\in \Ext(C_{\tdiv})} \PT(w) =\sum_{i=1}^r \Omega(\Gamma_{\sigma_i}).
\end{equation*}
But by \cref{prop:weight_tile}, the Parke-Taylor function of a tile $\Gamma_{\sigma_i}$ is $(-1)^k \PT(\mathbf{I}_n)$, where $k$ is the number of black triangles in any triangulation of the black polygons of $\sigma_i$.
Therefore, if $d_i$ is the number of vertices of $P_i$ and $k'$ is the area of $\tdiv$, we get 
\begin{align*}
    \sum_{(w)\in \Ext(C_\tdiv)} \PT(w) &=\sum_{i=1}^r \Omega(\Gamma_{\sigma_i})= (-1)^{k'} \PT(\mathbf{I}_n) \cdot\prod_{i=1}^{\ell} \left(\sum_{k_i=0}^{d_i-2} {d_i-2 \choose k_i} (-1)^{k_i}\right)=0,
\end{align*}
where the last equality holds because each term in the product is 0.
\end{proof}

\begin{figure}[h]
\includegraphics[height=1.1in]{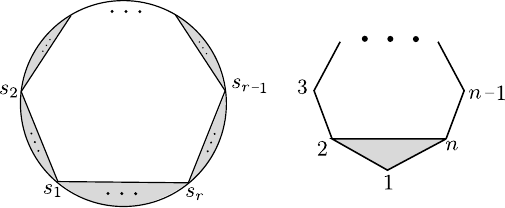}
        \caption{Tricolored subdivisions generating the relation in \cref{thm2:gen-parke-taylor} (left) and $U(1)$ decoupling identities as a special case (right).}
        \label{fig:pt_ids}
\end{figure}

\begin{remark}\label{rk:weight_PTpolytope} We can extend \cref{def:weight_pos_pol} and define the weight function $\Omega(\ptile{\tdiv})$ of a Parke-Taylor polytope $\ptile{\tdiv}$ as 
\begin{equation*}
\Omega(\ptile{\tdiv}):=\sum_{\tilde{\Delta}_{(w)}\subset \ptile{\tdiv}} \Omega(\simp{w})=\sum_{(w) \in \Ext(C_\tau)} \PT(w),
\end{equation*}
where the second equality follows from \cref{prop:volume}. \cref{thm:gen-parke-taylor} can then be restated as: the weight function of a Parke-Taylor polytope $\ptile{\tdiv}$ vanishes if $\tau$ has at least one grey polygon.
    
\end{remark}

\begin{remark}\label{rmk:Sn-action}
    There is an $S_n$ action on $\Gr_{2,n}$, where the permutation $\pi$ acts by sending the matrix $A$ with columns $v_1, \dots, v_n$ to the matrix $\pi(A)$ with columns $v_{\pi(1)}, \dots, v_{\pi(n)}.$ We have that $\PT(w)$ evaluated on $\pi(A)$ is equal to $\PT(\pi w)$ evaluated on $A$. Thus, given any identity of Parke-Taylor functions $\PT(w)$, we can obtain another identity by left-multiplying each argument $w$ by a fixed permutation in $S_n$.
\end{remark}

We highlight a few special cases of \cref{thm:gen-parke-taylor}. 

If we consider a tricolored subdivision with $\mathbf{P}_n$ all colored grey, then by \cref{ex:cube}, \cref{thm:gen-parke-taylor} and \cref{rk:weight_PTpolytope}, we have the following.
\begin{corollary}
The weight function of the
unit hypercube $\mbox{\mancube}_{n-1}$ is
\begin{equation*}
    \Omega(\mbox{\mancube}_{n-1})=\sum_{k=0}^{n-2} \Omega(\Delta_{k+1,n})=\sum_{w \in D_n} \Omega(\simp{w})=0.
\end{equation*}
Equivalently, the sum over $w \in D_n$ of the weight functions of $w$-simplices is zero.
\end{corollary}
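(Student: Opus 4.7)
The plan is to apply \cref{thm:gen-parke-taylor} directly to the tricolored subdivision $\tdiv$ of $\mathbf{P}_n$ consisting of a single grey $n$-gon (with no black or white polygons). By \cref{ex:cube}, the associated Parke-Taylor polytope $\ptile{\tdiv}$ is the unit hypercube $\mbox{\mancube}_{n-1}$, so the symbol $\Omega(\mbox{\mancube}_{n-1})$ is by definition $\Omega(\ptile{\tdiv})$ in the sense of \cref{rk:weight_PTpolytope}. Since $\tdiv$ has no black or white polygons, the partial cyclic order $C_{\tdiv}$ is empty, and hence every $(w) \in D_n$ (identified with a total cyclic order via \cref{not:long-cycle}) lies in $\Ext(C_{\tdiv})$.

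Combining \cref{prop:volume}, which triangulates $\ptile{\tdiv}$ into the projected $w$-simplices $\psimp_w$ for $(w) \in \Ext(C_{\tdiv})$, with \cref{rk:weight_PTpolytope} and \cref{def:wtPT}, one obtains
\[
\Omega(\mbox{\mancube}_{n-1}) \;=\; \sum_{(w) \in \Ext(C_{\tdiv})} \PT(w) \;=\; \sum_{w \in D_n} \Omega(\simp{w}).
\]
To get the middle equality with $\sum_{k=0}^{n-2}\Omega(\Delta_{k+1,n})$, I would partition $D_n = \bigsqcup_{k=0}^{n-2} D_{k+1,n}$ by cyclic descent count: every $w \in D_n$ has at least one cyclic descent (namely $n$ itself, since $w_n = n$ forces $1$ to lie left of $n$) and at most $n-1$ (since $n-1$ can never be a left descent when $w_n = n$). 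Applying \cref{def:weight_pos_pol} together with \cref{prop:StanleyTriangulation} gives $\Omega(\Delta_{k+1,n}) = \sum_{w \in D_{k+1,n}} \Omega(\simp{w})$, so summing over $k$ yields the desired identity.

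Finally, since $\tdiv$ contains a grey polygon, \cref{thm:gen-parke-taylor} directly gives $\sum_{(w) \in \Ext(C_{\tdiv})} \PT(w) = 0$, which furnishes the last equality. I do not expect any real obstacle: this corollary is essentially just the all-grey specialization of \cref{thm:gen-parke-taylor}. The only points deserving mild care are the identification of $\Omega(\mbox{\mancube}_{n-1})$ with $\Omega(\ptile{\tdiv})$ via \cref{ex:cube}, and the elementary verification that $D_n = \bigsqcup_{k=0}^{n-2} D_{k+1,n}$; everything else is bookkeeping.
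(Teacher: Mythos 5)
Your proof is correct and is essentially the paper's own argument: the corollary is stated there as the immediate consequence of applying \cref{thm:gen-parke-taylor} to the all-grey subdivision, identified with the hypercube via \cref{ex:cube} and \cref{rk:weight_PTpolytope}. The extra bookkeeping you supply (the partition $D_n=\bigsqcup_{k=0}^{n-2}D_{k+1,n}$ and the use of \cref{prop:StanleyTriangulation} for the middle equality) is accurate and consistent with what the paper leaves implicit.
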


If we consider a tricolored subdivision $\tdiv$ with one white polygon and all others grey (see \cref{fig:pt_ids}), we obtain the following.

\begin{corollary}\label{thm2:gen-parke-taylor}
Let $S=\{s_1< \dots < s_r\} \subsetneq [n-1]$, and let 
$\mathcal{D}_S$ be the set of permutations in $D_n$ in which 
$s_1,\dots, s_r$ appear in order.  Then we have that 
$$\sum_{w\in \mathcal{D}_S} \PT(w) = 0.$$
\end{corollary}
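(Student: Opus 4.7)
The plan is to deduce the identity as an immediate instance of \cref{thm:gen-parke-taylor}, applied to a tricolored subdivision that is tailored to $S$.  We may assume that $S \cup \{n\} \subsetneq [n]$ and $|S| \geq 2$; otherwise the condition on $\mathcal{D}_S$ is vacuous or forces $\mathcal{D}_S = \{\mathbf{I}_n\}$, and the identity either reduces to the easier statement $\sum_{w \in D_n} \PT(w) = 0$ (which follows from the all-grey tricolored subdivision) or to the classical Parke--Taylor/$U(1)$-decoupling identity already alluded to in the paper.

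First I would set $T := S \cup \{n\}$, list its elements as $t_1 < t_2 < \cdots < t_q$ (so $t_q = n$), and construct the tricolored subdivision $\tdiv$ of $\mathbf{P}_n$ consisting of a single white polygon whose vertices are exactly $T$ (so clockwise-ordered as $t_1, t_2, \dots, t_q$), together with one grey polygon filling the region bounded by each chord $\overline{t_i t_{i+1}}$ and the corresponding boundary arc of $\mathbf{P}_n$.  The standing assumption $T \subsetneq [n]$ guarantees that at least one of these grey regions is non-degenerate, so $\tdiv$ does contain a grey polygon and \cref{thm:gen-parke-taylor} applies.  Reading off $C_\tdiv$ directly from \cref{def:cyclic-from-perm-subdiv} (the grey polygons contribute nothing), one obtains the single chain
$$C_\tdiv \;=\; C_{(t_1, t_2, \dots, t_q)}.$$

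The main step, and the only place where real bookkeeping is needed, is the bijection
$$\Ext(C_\tdiv) \; \longleftrightarrow \; \mathcal{D}_S$$
under the identification of total cyclic orders on $[n]$ with elements of $D_n$.  Given $w \in D_n$, the $n$-cycle $(w)$ places $w_n = n = t_q$ in a fixed position with $w_1$ immediately clockwise of it; thus the induced cyclic order of $(w)$ on $T$ equals $(t_1, \dots, t_q)$ if and only if $t_1, t_2, \dots, t_{q-1}$ appear in this linear order when reading $w_1, w_2, \dots, w_{n-1}$ left to right.  Since the increasing sequence $s_1 < s_2 < \cdots < s_r$ is a subsequence of $t_1 < t_2 < \cdots < t_{q-1}$, this is in turn equivalent to $s_1, s_2, \dots, s_r$ appearing in order in the one-line notation of $w$, i.e.\ to $w \in \mathcal{D}_S$.

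Finally, \cref{thm:gen-parke-taylor} applied to $\tdiv$ gives
$$0 \;=\; \sum_{(w) \in \Ext(C_\tdiv)} \PT(w) \;=\; \sum_{w \in \mathcal{D}_S} \PT(w),$$
which is the asserted identity.  The construction of $\tdiv$ is forced by the shape of the desired chain, so I do not foresee any substantive obstacle beyond the careful verification of the bijection; once that is in hand the conclusion is an immediate corollary of the general Parke--Taylor identity.
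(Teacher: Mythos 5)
Your overall strategy---one white polygon, the complement grey, then \cref{thm:gen-parke-taylor}---is exactly the paper's, but as written your proof establishes a different identity, because you read ``$s_1,\dots,s_r$ appear in order'' as a condition on the one-line notation of $w$. In this paper ``in order'' is meant cyclically, in the sense of \cref{thm:when-w-simp-in-tile}: $\mathcal{D}_S$ is the set of $w\in D_n$ for which $s_1,\dots,s_r$ appear in order in the cycle $(w)$, i.e.\ $C_w\in\Ext(C_{(s_1,\dots,s_r)})$. That is precisely what the paper's proof uses: take the white polygon on $S$ itself and color the rest of $\mathbf{P}_n$ grey, so that $C_{\tdiv}=C_{(s_1,\dots,s_r)}$ and $\Ext(C_{\tdiv})=\mathcal{D}_S$ by \cref{def:cyclic-from-perm-subdiv}; \cref{thm:gen-parke-taylor} then finishes the argument (only $S\neq[n]$ is needed, so no case analysis). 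Your subdivision, with white polygon on $S\cup\{n\}$, instead picks out only those $w$ whose one-line notation contains $s_1,\dots,s_r$ as a subsequence. When $n\notin S$ this is a proper subset of $\mathcal{D}_S$: for $n=5$, $S=\{1,2,3\}$, the permutation $w=23145$ lies in $\mathcal{D}_S$ (the cycle $(2\,3\,1\,4\,5)$ restricts to $(1\,2\,3)$ on $S$) but not in your index set. What you prove is in fact the corollary applied to $S\cup\{n\}$---a true identity, but it coincides with the stated one only when $n\in S$.

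The mismatch also surfaces in the cases you set aside. Under your linear reading, if $S=[n-1]$ (so $S\cup\{n\}=[n]$) then your $\mathcal{D}_S=\{\mathbf{I}_n\}$ and the claimed identity would read $\PT(\mathbf{I}_n)=0$, which is false; it does not reduce to the $U(1)$-decoupling identity, whose sum has $n-1$ terms. Under the intended cyclic reading this case is unproblematic: $\mathcal{D}_{[n-1]}$ consists of the $n-1$ cyclic rotations of $12\cdots(n-1)$ followed by $n$, and the paper's construction (white $(n-1)$-gon on $[n-1]$, one grey triangle on $\{n-1,n,1\}$) covers it with no special pleading. (The only genuinely degenerate case is $S=[n]$, where there is no grey polygon and the statement implicitly does not apply.) The fix is simply to place the white polygon on $S$ rather than on $S\cup\{n\}$; your bookkeeping identifying circular extensions with elements of $D_n$ is otherwise fine, and with that one change your argument becomes the paper's proof.
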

\begin{proof}
Consider the $n$-gon $\mathbf{P}_n$ with vertices $1,2,\dots,n$ and let 
$R$ be the subpolygon with vertices $S \cup \{n\}.$  
Let $\tdiv$ be the tricolored subdivision 
of $\mathbf{P}_n$ in which the subpolygon $R$ is white and the rest of $\mathbf{P}_n$ is grey, see \cref{fig:bigexample}.
By \cref{def:cyclic-from-perm-subdiv}, $\mathcal{D}_S$ exactly corresponds to the 
total cyclic orders which are circular extensions of the circular order $C_{\tdiv}.$
   Now the result follows from \cref{thm:gen-parke-taylor}. 
\end{proof}

In the specific case of \cref{thm2:gen-parke-taylor} when $r=n-1$ and $S=[n-1]\setminus \{1\}$, we obtain the known \emph{$U(1)$ decoupling identities}\footnote{This relation has the interpretation of a scattering amplitude of $n-1$ $SU(N)$ `gluons' and a $U(1)$ `photon' that vanishes because the latter is decoupled from (i.e. does not interact with) the former.} for Parke-Taylor functions.
\begin{corollary}\label{cor:U1} We have
\begin{equation*}
\PT(1,2,3,\ldots,n)+\PT(2,1,3,\ldots,n)+\PT(2,3,1,\ldots,n)+\ldots+\PT(2,3,\ldots,1,n)=0. 
\end{equation*}
    
\end{corollary}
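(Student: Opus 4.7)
The plan is to derive this as an immediate specialization of Corollary~\ref{thm2:gen-parke-taylor}. I would apply that corollary with the subset $S = [n]\setminus\{1\}$, i.e.\ $s_1=2,\, s_2=3,\, \ldots,\, s_{n-1}=n$, and then identify the set $\mathcal{D}_S$ explicitly. The definition of $D_n$ already forces $w_n=n$, so the additional condition that $2,3,\ldots,n$ appear in this order in $w$ reduces to requiring that $2,3,\ldots,n-1$ occupy $n-2$ of the first $n-1$ positions in increasing order. The remaining freedom is then just the position at which the letter $1$ is inserted among positions $1,2,\ldots,n-1$.

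This enumeration produces exactly the $n-1$ permutations
\[
(1,2,3,\ldots,n),\ (2,1,3,\ldots,n),\ (2,3,1,\ldots,n),\ \ldots,\ (2,3,\ldots,1,n),
\]
one for each choice of position for $1$. These are precisely the terms appearing on the left-hand side of the $U(1)$ decoupling identity, so substituting into the identity $\sum_{w\in\mathcal{D}_S}\PT(w)=0$ from Corollary~\ref{thm2:gen-parke-taylor} yields the claim.

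There is no real obstacle here: the substantive work has already happened in Theorem~\ref{thm:gen-parke-taylor}, which was applied to the tricolored subdivision with a single white polygon on $S$ and the rest grey (as in the right picture of Figure~\ref{fig:pt_ids}), and Corollary~\ref{thm2:gen-parke-taylor} is essentially just a reformulation of that specialization. What remains is the routine combinatorial verification of the description of $\mathcal{D}_S$ outlined above.
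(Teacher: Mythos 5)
Your proof is correct and follows the paper's own route: the paper likewise obtains this identity as the specialization of Corollary~\ref{thm2:gen-parke-taylor} to $r=n-1$ and $S=[n]\setminus\{1\}$, with the same routine identification of $\mathcal{D}_S$ as the $n-1$ permutations obtained by inserting $1$ among $2,3,\ldots,n-1$ before the final letter $n$.
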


We end this section with a variation of \cref{thm2:gen-parke-taylor}, which gives the \emph{shuffle identities} for Parke-Taylor functions (see \cite[Section 3.1]{Frost:2019fjn} and \cite{Cresson}). 
\begin{proposition} \label{prop:shufflePT}
Let $I\subseteq [n-1]$ and fix a permutation $u=u_1 \dots u_r$ of $I$ and a permutation $v=v_1 \dots v_{n-r-1}$ of $[n-1]\setminus I$. Let $\Sh_n(u,v)$ be the set of permutations in $D_n$ in which $u_1,\ldots,u_r$ appear in order and $v_1,\ldots,v_{n-r-1}$ appear in order. Then we have
\begin{equation*}
 \sum_{w\in \Sh_n(u,v)} \PT(w) = 0.   
\end{equation*}
\begin{proof}
Let $\tau$ be the tricolored subdivision consisting of a black polygon with vertices $[r] \cup \{n\}$, a white polygon with vertices $[r+1, n]$, and a grey polygon with vertices $\{r,r+1,n\}$. Let $C_\tdiv$ be the corresponding partial cyclic order. The set of circular extensions $\Ext(C_{\tdiv})$ consists of circular orders $(w)$ in which $n, r, r-1, \dots, 1$ appear in that order clockwise and $r+1, \dots, n$ appear in that order clockwise. Rotating so $n$ is at the end, we have that $(w)$ corresponds to $w \in D_n$ where $r, r-1, \dots, 1$ appear in order and $r+1, \dots, n-1$ appear in order. Define the permutation $x= r~r-1 \dots 1$ on $[r]$ and the permutation $y= r+1 ~ r+2 \dots n-1$ on $[r, n-1]$. By \cref{thm:gen-parke-taylor} and the above paragraph, we have
\begin{equation}\label{eq:shuffleproof}
 \sum_{w\in \Sh_n(x,y)} \PT(w) =\sum_{(w)\in \Ext(C_{\tdiv})} \PT(w) = 0.   
\end{equation}

Now, we define a permutation $\pi$ on $[n]$ so that left multiplication by $\pi$ sends $\Sh_n(x,y)$ to $\Sh_n(u,v)$. Writing $\pi$ in two-line notation, 
\[\pi:= \begin{pmatrix}
    1 & 2 & \cdots & r & r+1 & r+2 & \cdots & n-1 & n\\
    u_r & u_{r-1} & \cdots & u_1 & v_1 & v_2 & \cdots & v_{n-r-1} & n\\
\end{pmatrix}.\]
By \cref{rmk:Sn-action}, replacing $\PT(w)$ with $\PT(\pi w)$ in \eqref{eq:shuffleproof} yields another identity. We then have
\[0= \sum_{w\in \Sh_n(x,y)} \PT(\pi w)= \sum_{w\in \Sh_n(u,v)} \PT(w)\] 
by the choice of $\pi$.
\end{proof}
\end{proposition}

\appendix
\section{The G-amplituhedron and Parke-Taylor functions}\label{sec:G}
In this section, we first give the definition of $w$-chambers in $\Ank$. Then we
describe the interpretation of some of our results in the \emph{G-amplituhedron} $\mathcal{G}_{n,k,2}$, which is a ``$Z$-independent" version of $\Ank$ introduced in \cite{LPW}. In this context, the Parke-Taylor functions $\PT(w)$ have a geometric interpretation as \emph{canonical functions} of certain \emph{positive geometries}. 

Recall the definition of twistor coordinates $\llrr{Yab}$ from \cref{def:tw_coords}.
\begin{definition}[{\cite[Definition 10.7]{LPW}}]\label{def:ampchamber}
	Fix $Z \in \Mat_{n, k+2}^{>0}$. Let $w \in D_{k+1,n}$, $I_a=\cdes(w^{(a)})$, and define
	\begin{equation*}
	(\cham{w})^{\circ}:=\bigl\{ Y \in \Gr_{k,k+2}\colon	\sgn \llrr{Y ab}= (-1)^{|I_a \cap [a, b-1]|-1} \mbox{ for all } a<b  \bigr\} 
	\end{equation*}
	The closure $\cham{w}:= \overline{(\cham{w})^{\circ}}$ in $\Gr_{k,k+2}$ is a \emph{$w$-chamber} of $\Ank$.
\end{definition}

Depending on the choice of $Z$, $\cham{w}$ may be empty \cite[Section 11.3]{PSW}, though there always exists a $Z$ such that $\cham{w}$ is nonempty \cite[Theorem 11.5]{PSW}. To avoid this irregularity, one may pass to the \emph{G-amplituhedron} and define $w$-chambers there.

	Given $v\in \R^n$, let
	$\mbox{var}(v)$ be the number of times the entries of $v$ changes sign 
	when we read the entries from left to right and ignore any
	zeros. For example, if $v:=(4,-1,0,-2)$ then $\mbox{var}(v)=1$.
\begin{definition}[{\cite[Definition 11.9]{LPW}}]
	Fix $k<n$ and let 
	\begin{align*}
		\mathcal{G}_{n,k,2}^{\circ}:= \{z\in \Gr_{2,n} \ &\colon \  
		P_{i,i+1}(z)>0 \text{ for }1 \leq i \leq n-1,
		(-1)^kP_{1n}(z) >0,\\
		&\text{ and } 
		\var((P_{12}(z), 
		P_{13}(z),  \dots 
		P_{1n}(z))=k\}.
	\end{align*}
	The closure $\mathcal{G}_{n,k,2}:=\overline{\mathcal{G}_{n,k,2}^{\circ}}$ in $\Gr_{2,n}$ is
	the \emph{G-amplituhedron}. The \emph{total G-amplituhedron} is the union
 \[\mathcal{G}_n := \bigcup_{k=0}^{n-2}\mathcal{G}_{n,k,2} \subset \Gr_{2,n}.\]
\end{definition}
One should think of $\mathcal{G}_n$ as an analogue of the hypercube: just as the hypercube is the union of all (projected) hypersimplices, the total G-amplituhedron is the union of all G-amplituhedra.

Motivated by the decomposition of $\Ank$ into $w$-chambers, we analogously define $w$-chambers for $\mathcal{G}_{n,k,2}$ which are (closures of) certain uniform oriented matroid strata in $\mbox{Gr}_{2,n}$.
\begin{definition}[{\cite[Definition 11.13]{LPW}}]\label{def:ampchamber2}
	Let $w \in D_{k+1,n}$, $I_a=\cdes(w^{(a)})$, and define
	\begin{equation*}
	(\gcham{w})^{\circ}:=\bigl\{ z \in \Gr_{2,n}\colon	\sgn P_{ ab}(z)= (-1)^{|I_a \cap [a, b-1]|-1} \mbox{ for all } a<b  \bigr\} 
	\end{equation*}
	The closure $\gcham{w}:= \overline{(\gcham{w})^{\circ}}$ in $\Gr_{2,n}$ is a \emph{$w$-chamber} of the G-amplituhedron $\mathcal{G}_{n,k,2}$.
\end{definition}

\begin{remark}
 In analogy with the amplituhedron $\mathcal{A}_{n,k,2}(Z)$, the G-amplituhedron $\mathcal{G}_{n,k,2}$ is the union of the $w$-chambers $\gcham{w}$, with $w$ ranging over  $D_{k+1,n}$ \cite[Theorem 11.21]{PSW}. The amplituhedron $\mathcal{A}_{n,k,2}(Z)$ can be seen as a $2k$-dimensional linear slice of $\mathcal{G}_{n,k,2}$ and the $w$-chamber $\cham{w}$ in $\mathcal{A}_{n,k,2}(Z)$ as a $2k$-dimensional linear slice of the $w$-chamber $\gcham{w}$ (see \cite[Proposition 11.11, Remark 11.17]{PSW}).   
\end{remark}

\begin{proposition}[{\cite[Proposition 11.15]{PSW}}] \label{proposition:GchamberNonempty}
	Let $w \in D_{k+1, n}$. Then 
	$(\gcham{w})^{\circ}$ 
	is nonempty and is contractible. 
\end{proposition}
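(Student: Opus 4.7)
My approach is to parameterize $(\gcham{w})^\circ$ by cyclically ordered configurations of $n$ points on the projective line, and then deduce both nonemptiness and contractibility from the classical structure of that configuration space.

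Represent a point of $\Gr_{2,n}$ by a $2\times n$ matrix with columns $v_1,\dots,v_n\in\R^2$.  The sign conditions in \cref{def:ampchamber2} force every Pl\"ucker coordinate $P_{ab}(V)=\det(v_a\mid v_b)$ to be nonzero, so all $v_i$ are nonzero and pairwise non-parallel, and each $v_i$ determines a distinct ray $\bar v_i$ -- equivalently, a point of the oriented circle $S^1$.  Quotienting by the positive torus $T^+=(\R_{>0})^n$ rescales each column within its ray, so the map $V\mapsto(\bar v_1,\dots,\bar v_n)$ identifies $(\gcham{w})^\circ/T^+$ with a semialgebraic subset of the moduli space of $n$ distinct labeled points on $S^1$ modulo orientation-preserving projective transformations -- equivalently, an open subset of a component of the real moduli space $M_{0,n}(\R)$.

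The key point to verify is that this subset is precisely the locus of configurations whose cyclic order on $S^1$ is $(w)=(w_1\ w_2\ \cdots\ w_n)$.  Since $\sgn P_{ab}$ records whether $\bar v_b$ lies in the open half-plane counterclockwise or clockwise from $\bar v_a$, specifying all the signs amounts to specifying the cyclic arrangement of the $\bar v_i$.  Showing that the prescribed signs $(-1)^{|I_a\cap[a,b-1]|-1}$ match the cyclic order $(w)$ would be the main obstacle; I would argue it by induction on $n$, using the recursion $I_{w_{i+1}}=I_{w_i}\setminus\{w_{i+1}-1\}\cup\{w_{i+1}\}$ from \cref{remark:circuitdefinition} to show that consecutive entries of $(w)$ correspond to cyclically adjacent rays, and that deleting one ray reduces the problem to the analogous chamber for $n-1$.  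This bookkeeping essentially recasts the combinatorics of cyclic descents developed in \cref{sec:background,sec:cyclic}.

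With the identification in hand, nonemptiness is transparent: place $n$ distinct rays on $S^1$ in the cyclic order $(w)$, form the $2\times n$ matrix with those columns, and obtain a point of $(\gcham{w})^\circ$.  For contractibility, the component of $M_{0,n}(\R)$ corresponding to a fixed cyclic order is a classical open cell of dimension $n-3$ (realized, for instance, as the interior of an associahedron), hence contractible.  Above this base, $(\gcham{w})^\circ$ is a principal bundle for the effective $(n-1)$-dimensional quotient of $T^+$, which is itself a contractible group; triviality of the bundle (because the base is contractible) exhibits $(\gcham{w})^\circ$ as a product of two contractible spaces and thus contractible.
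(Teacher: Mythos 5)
Your argument hinges on the claim that the sign conditions in \cref{def:ampchamber2} are equivalent to fixing the cyclic order of the rays $\bar v_1,\dots,\bar v_n$ on $S^1$ to be $(w)$, and that claim is false: the sign of $P_{ab}=\det(v_a\mid v_b)$ records whether the counterclockwise angle from $\bar v_a$ to $\bar v_b$ is less than or greater than $\pi$, which is strictly finer information than the cyclic position of the rays. Concretely, take $n=3$, $w=213\in D_{2,3}$, so $I_1=\{1,2\}$, $I_2=\{2,3\}$, and $(\gcham{w})^{\circ}=\{P_{12}>0,\ P_{23}>0,\ P_{13}<0\}$. The point with columns at angles $0,2,4$ on the circle lies in this chamber, yet its rays are in counterclockwise cyclic order $1,2,3$, not $(w)=(2\ 1\ 3)$; moreover the inequalities $P_{12}>0$, $P_{13}<0$ force every point of this chamber to have ray order $1,2,3$, the same ray order carried by points of $(\gcham{123})^{\circ}$ (e.g.\ angles $0,1,2$). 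So the ray cyclic order neither equals $(w)$ nor distinguishes chambers with different $k$, your proposed nonemptiness construction (place rays in cyclic order $(w)$) produces no point of $(\gcham{213})^{\circ}$ at all, and the fibration over a component of $M_{0,n}(\R)$ used for contractibility does not exist as described. The step you flagged as ``the main obstacle'' is not a bookkeeping verification; it is where the approach breaks.

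The correct statement, and the route taken in \cite{PSW} (reproduced after the proposition via \eqref{eq:app_matrix}--\eqref{eq:ptInChamber}), is that $(\gcham{w})^{\circ}$ is the image of $\Gr_{2,n}^{>0}$ under the linear automorphism of $\Gr_{2,n}$ that reorders columns according to $w^{(1)}$ and multiplies column $b$ by $(-1)^{|I_1\cap[1,b-1]|-1}$; since this signed column permutation is a homeomorphism and $\Gr_{2,n}^{>0}$ is nonempty and contractible (an open cell), both claims follow at once. In your circle language, what is true is that the \emph{lines} in $\mathbb{RP}^1$ spanned by the columns appear in cyclic order $(w)$; but that locus is a union of many sign chambers, and it is precisely the extra choice of ray over each line -- encoded by the sign twist above -- that singles out $(\gcham{w})^{\circ}$. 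Repairing your argument amounts to reinstating this sign data, at which point it becomes the paper's proof.
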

From the proof of \cite[Proposition 11.15]{PSW}, we can find points in $(\gcham{w})^{\circ}$ explicitly. Consider $n$ vectors $v_1, v_2, \dots, v_n$ in $\mathbb{R}^2$ so that the matrix
\begin{equation}\label{eq:app_matrix}
  \begin{pmatrix}
		v_1 & v_{w_1^{(1)}}& v_{w_2^{(1)}} &\dots& v_{w_{n-1}^{(1)}}
	\end{pmatrix} \in \mbox{Mat}^{>0}_{2,n},  
\end{equation}
 equivalently, the vectors $v_1, v_{w_1^{(1)}}, v_{w_2^{(1)}}, \dots, v_{w_{n-1}^{(1)}}$ drawn in a plane are ordered counterclockwise. (Recall that $w^{(1)}$ denotes the rotation of $w$ ending in $1$, and hence $w^{(1)}_n=1$.)
	Now, set $z_1:=v_1$ and $z_b := (-1)^{|I_1 \cap [1, b-1]|-1}v_b$, for $b>2$. Then
 \begin{equation}\label{eq:ptInChamber}
		z=\begin{pmatrix}
		z_1 & z_2& z_3 &\dots& z_n
	\end{pmatrix}\end{equation}
	represents a point in $(\gcham{w})^{\circ}$. Moreover, all points in $(\gcham{w})^{\circ}$ arise this way.

\begin{example} \label{ex:realizable}
	Let $w=2564137\in D_{4,7}$, so $w^{(1)}=3725641$. 
	We have $I_1 = \{1,2,4,6\}$ and
	\begin{align*}
		(v_1, v_{w_1^{(1)}}, v_{w_2^{(1)}}, v_{w_3^{(1)}}, v_{w_4^{(1)}}, v_{w_5^{(1)}}, v_{w_6^{(1)}})
		&= (v_1, v_3, v_7, v_2, v_5, v_6, v_4)  
		= \begin{pmatrix}
			1 & 1 & 1 & 1 & 1 & 1 & 1 \\
			1 & 2 & 3 & 4 & 5 & 6 & 7
		\end{pmatrix}.
	\end{align*}
	We then get \vspace{-1em}
	\begin{equation*}
		z = \begin{pmatrix}
			1 & 1 & -1 & -1 & 1 & 1 & -1 \\
			1 & 4 & -2 & -7 & 5 & 6 & -3
		\end{pmatrix} \in (\gcham{w})^{\circ}.
	\end{equation*}
\end{example}
\begin{remark}
 The positive Grassmannian $\Gr^{\geq 0}_{2,n}$ is a \emph{positive geometry} in the sense of \cite{ABL,Lam:2022yly}. Furthermore, the Parke-Taylor function $\PT(\mathbf{I}_n)$ is the \emph{canonical function} of $\Gr^{\geq 0}_{2,n}$; that is 
multiplying $\PT(\mathbf{I}_n)$ by the standard top form of $\Gr_{2,n}$ (cf. \cite[Appendix C.2]{ABL}) gives the \emph{canonical form} of $\Gr^{\geq 0}_{2,n}$. The poles $P_{i,i+1}=0$ of $\PT(\mathbf{I}_n)$ correspond to the facets of $\Gr^{\geq 0}_{2,n}$. 

More generally, $\Gr_{k,n}^{\ge 0}$ is a positive geometry \cite{ABL,Lam:2022yly} and the amplituhedron $\A_{n, k, m}(Z)$ is conjectured to be a positive geometry. This was recently proved for $k=2, m=2$ \cite{Ranestad:2024svp}.
\end{remark}
\begin{remark}
 Each $w$-chamber $\gcham{w}$ of the G-amplituhedron is isomorphic to $\Gr^{\geq 0}_{2,n}$ by a map induced by permuting and rescaling columns \cite[Proposition 11.15]{PSW}.
Hence $\gcham{w}$ is also a positive geometry and the weight function $\Omega(\simp{w})=\PT (w)$ of the $w$-simplex $\simp{w}$ is the canonical function of $\gcham{w}$. Indeed, the poles $P_{w_i, w_i+1}=0$ of $\PT (w)$ correspond to the facets of $\gcham{w}$. From the definition of $z_a$ and \eqref{eq:app_matrix}, the facets of $\gcham{w}$ are in the locus where $v_{w^{(1)}_i},v_{w^{(1)}_{i+1}}$ are parallel and are therefore cut out by the equation $P_{w_i, w_i+1}=0$ in the Pl\"uckers of the matrix $z$ of \eqref{eq:ptInChamber}.   
\end{remark}

Given a $(k,n)$-bicolored subdivision $\sigma$, one can define \emph{tiles} in the G-amplituhedron by considering the union of the $w$-chambers $\gcham{w}$ over $(w) \in \Ext(C_\sigma)$. One can use these tiles to tile the G-amplituhedron $\mathcal{G}_{n,k,2}$ and such tilings of $\mathcal{G}_{n,k,2}$ are in bijection with tilings of $\Delta_{k+1,n}$ (and with all-$Z$ tilings of $\Ank$). 

The total G-amplituhedron $\mathcal{G}_{n}$ is given by the union of the $w$-chambers $\cham{w}$ over $w \in D_n$ \cite[Theorem 11.21]{PSW}.
Then one can consider tricolored subdivision $\tdiv$ and define the region in $\mathcal{G}_{n}$ which is the union of the $w$-chambers $\cham{w}$ with $w \in \Ext(C_\tdiv)$. This is the analogue of the Parke-Taylor polytope $\ptile{\tdiv}$. As the weight function $\Omega(\simp{w})$ of $w$-simplices gives the canonical function of $w$-chambers $\Delta^\mathcal{G}_w$, it would be interesting to interpret the weight function of Parke-Taylor polytopes $\Omega(\ptile{\tdiv})$ as canonical functions of some positive geometries inside the (total) G-amplituhedron.

\bibliographystyle{alpha}
	\bibliography{ClusterTilesPromotion}
\end{document}